\newcommand\al{\alpha}
\newcommand\be{\beta}
\newcommand\dd{\mathrm d}
\newcommand\De{\Delta}
\newcommand\de{\delta}
\newcommand\deq{\stackrel{\mathrm{distr.}}{=}}
\newcommand\eps{\varepsilon}
\newcommand\ka{\kappa}
\newcommand\La{\Lambda}
\newcommand\la{\lambda}
\newcommand\Om{\Omega}
\newcommand\si{\sigma}
\newcommand\ze{\zeta}
\renewcommand\d{~\mathrm d}
\renewcommand\phi{\varphi}
\renewcommand\th{\vartheta}
\renewcommand\hat{\widehat}
\newcommand\mbb{\mathbb}
\newcommand\mbf{\mathbf}
\newcommand\mc{\mathcal}
\newcommand\mf{\mathfrak}
\newcommand\mr{\mathrm}
\newcommand\ms{\mathscr}
\numberwithin{equation}{section}
\makeatletter\renewenvironment{proof}[1][\proofname]
{\par
%\pushQED{\qed}
\normalfont\topsep6\p@\@plus6\p@\relax\trivlist\item[\hskip\labelsep\bfseries#1\@addpunct{.}]\ignorespaces}\qed
\journalname{}
\begin{document}

\title{Phase Transitions in Asymptotically Singular Anderson Hamiltonian and Parabolic Model%\thanks{Grants or other notes
%about the article that should go on the front page should be
%placed here. General acknowledgments should be placed at the end of the article.}
}
%\subtitle{Do you have a subtitle?\\ If so, write it here}

\titlerunning{Phase Transitions in Asymptotically Singular AH and PAM}        % if too long for running head

\author{Pierre Yves Gaudreau Lamarre}

%\authorrunning{Short form of author list} % if too long for running head

\institute{P. Y. Gaudreau Lamarre\at
University of Chicago, Chicago, IL\space\space60637, USA\\
              \email{pyjgl@uchicago.edu}
}

\date{}
% The correct dates will be entered by the editor

\maketitle

\begin{abstract}
Let $\xi$ be a Gaussian white noise on $\mbb R^d$ ($d=1,2,3$).
Let $(\xi_\eps)_{\eps>0}$ be continuous Gaussian
processes such that $\xi_\eps\to\xi$ as $\eps\to0$, defined by convolving
$\xi$ against a mollifier. We consider the asymptotics
of the parabolic Anderson model (PAM) with noise $\xi_{\eps(t)}$ for large time $t\gg1$,
and the Dirichlet eigenvalues of the Anderson Hamiltonian (AH) with potential $\xi_{\eps(t)}$
on large boxes $(-t,t)^d$, where the parameter
$\eps(t)$ vanishes as $t\to\infty$. We prove that the asymptotics in question
exhibit a phase transition in the rate at which $\eps(t)$ vanishes,
which distinguishes between the behavior observed in the AH/PAM with continuous
Gaussian noise and white noise.
By comparing our main theorems with previous results on the AH/PAM
with white noise, our results show that some asymptotics of
the latter can be accessed with solely elementary
methods, and we obtain quantitative estimates on the difference between
the AH/PAM with white noise and its continuous-noise approximations
as $t\to\infty$.
\keywords{Parabolic Anderson model \and Anderson Hamiltonian \and white noise \and asymptotically singular noise \and phase transition}
% \PACS{PACS code1 \and PACS code2 \and more}
 \subclass{60H15 \and 82B44 \and 47D08}
\end{abstract}

\section{Introduction}

\subsection{Continuous PAM and AH}

The continuous parabolic Anderson model (PAM) is defined as the
solution $u(t,x)$ of a random heat equation of the form
\begin{align}
\label{Equation: General PAM}
\begin{cases}
\partial_tu(t,x)=\tfrac12\De u(t,x)+\xi(x)u(t,x)\\
u(0,x)=u_0(x)
\end{cases},
\qquad t\geq0\text{ and }x\in\mbb R^d,
\end{align}
where $\xi$ is a random potential called the noise.
A closely associated object is the Anderson Hamiltonian\footnote{In terms of physical terminology,
one should instead define the Anderson Hamiltonian with random potential $\xi$ as $-\tfrac12\De+\xi$.
That said, in this paper we use \eqref{Equation: General AH}
for convenience.}
(AH), defined as the operator
\begin{align}
\label{Equation: General AH}
Af(x):=\tfrac12\De f(x)+\xi(x)f(x)
\end{align}
acting on some domain of functions $f:\mbb R^d\to\mbb R$
on which $A$ is self-adjoint.

Starting from the pioneering work
of G\"artner and Molchanov \cite{GartnerMolchanov},
the PAM literature has mostly been concerned with understanding the
occurrence of intermittency in \eqref{Equation: General PAM} for
large times (e.g., \cite[Section 1.4]{KonigBook}). Given the connection
between the AH and PAM via semigroup theory, a closely related
problem is that of localization in the AH's spectrum
(e.g., \cite[Sections 2.2.1--2.2.4]{KonigBook}).
We refer to \cite{CarmonaMolchanovBook,KonigBook} and references
therein for surveys of the field.
As it turns out, a few features of the AH/PAM have been the subject
of the majority of investigations to date, arguably due to the fact that they are
amenable to computation and encode useful information about
the geometry of intermittency:
Let us denote the Dirichlet eigenvalues of $A$ on a bounded open set $\Om\subset\mbb R^d$ as
\begin{align}
\label{Equation: Eigenvalues}
\La_1(A,\Om)\geq\La_2(A,\Om)\geq\La_3(A,\Om)\geq\cdots.
\end{align}
Let us define the total mass of the PAM as
\begin{align}
\label{Equation: Total Mass}
U(t):=\mbf E^0\left[\exp\left(\int_0^t\xi\big(B(s)\big)\d s\right)\right],\qquad t\geq0,
\end{align}
where $B$ is a standard Brownian motion on $\mbb R^d$
and $\mbf E^0$ denotes the expectation with respect to $B$ with initial
value of zero (i.e., $B(0)=0$), conditional on $\xi$.
(Equivalently, we can write the total mass $U(t)=u(t,0)$ as the solution of the PAM at $x=0$
with flat initial condition $u(0,x)=1$.)

\begin{problem}[Annealed Total Mass]
\label{Problem: Annealed}
Understand the $t\to\infty$ behavior of
the moments of the total mass $\mbf E\big[U(t)^p\big]$ ($p\geq0$).
\end{problem}
\begin{problem}[Quenched Total Mass]
\label{Problem: Quenched}
Understand the almost-sure $t\to\infty$ behavior of
the total mass $U(t)$.
\end{problem}
\begin{problem}[Eigenvalues]
\label{Problem: Eigenvalue}
Understand the almost-sure $t\to\infty$ behavior of the eigenvalues
$\La_k(A,Q_t)$ for fixed $k\geq1$ on large boxes $Q_t:=(-t,t)^d$.
\end{problem}
We refer to \cite{CarmonaMolchanov,GartnerKonig,GartnerKonigMolchanov}
for a derivation of the first- and second-order asymptotics of the above
in the AH/PAM with certain continuous noises (including
continuous Gaussian processes)
and an explanation of how these computations shed light on the
geometry of intermittency. See \cite{GartnerMolchanov,GartnerMolchanov2}
for similar results in the discrete setting.

\subsection{AH/PAM with White Noise}

In this paper, we are interested in understanding intermittency
in the PAM with white noise (WN).
WN is formally defined as a centered Gaussian process on $\mbb R^d$
with delta Dirac covariance
\begin{align}
\label{Equation: Intuitive White Noise}
\mbf E\big[\xi(x)\xi(y)\big]\,``="\,\de_0(x-y),\qquad x,y\in\mbb R^d.
\end{align}
Although WN is among the most natural examples
of noises to consider on $\mbb R^d$ (e.g., \cite[Section 1.5.2]{KonigBook}),
the rigorous treatment of the AH/PAM in this setting is made difficult
by the fact that WN is a Schwartz distribution.
Most notably, the $``$pointwise products$"$ $\xi(x)u(t,x)$ and $\xi(x)f(x)$ in
\eqref{Equation: General PAM} and \eqref{Equation: General AH} are ill posed,
making the very definition of the AH/PAM nontrivial.

To overcome this technical issue, an intuitive approach
is to proceed as follows:
Using classical theory,
define a family of approximate AHs and PAMs
$(A_\eps)_{\eps>0}$ and $(u_{\eps})_{\eps>0}$ with smoothed noises
$(\xi_\eps)_{\eps>0}$ that approach $\xi$ as $\eps\to0$.
Then, the hope is that we can obtain universal
(i.e., independent of the particular way in which we define $\xi_\eps$) limits
\begin{align}
\label{Equation: Naive Limit}
A:=\lim_{\eps\to0} A_\eps
\qquad\text{and}\qquad
u(t,x):=\lim_{\eps\to0}u_\eps(t,x),
\end{align}
which we take as the definitions of the AH/PAM with WN.
In one dimension ($d=1$), this procedure works
and there is a straightforward sense in which the limits \eqref{Equation: Naive Limit}
can be interpreted as the AH/PAM using quadratic forms and stochastic calculus; see \cite{BloemendalVirag,Chen14,FukushimaNakao,GaudreauLamarre2,GaudreauLamarre,GaudreauLamarreShkolnikov,Tindeletal,Labbe,RamirezRiderVirag}. 
In contrast, in higher dimensions ($d\geq2$) the limits \eqref{Equation: Naive Limit}
blow up. While the AH/PAM with WN are not expected to
make sense for $d\geq4$ (e.g., \cite{Hairer,Labbe}),
for $d=2,3$ nontrivial limits can be obtained
if one considers renormalizations of
 $u_\eps$ and $A_\eps$.
The limits thus obtained can be
interpreted in a rigorous sense as
the AH/PAM with WN using sophisticated solution theories for
SPDEs with irregular noise, such as regularity structures or
paracontrolled calculus; e.g., \cite{AllezChouk,GIP,Hairer,HairerLabbe2,Labbe}.
(Though, in some cases, simpler constructions can be used,
e.g., \cite{HairerLabbe}.)

Due to these technical difficulties,
the understanding of intermittency in the PAM with WN
is much less advanced than that with
continuous Gaussian noise
(c.f., \cite{CarmonaMolchanov,GartnerKonig,GartnerKonigMolchanov}).
More specifically, for $d=1$, first-order asymptotics
for Problems \ref{Problem: Annealed}--\ref{Problem: Eigenvalue} have been
obtained in \cite{Chen14,Tindeletal} (see also
\cite{CambroneroMcKean,CambroneroRiderRamirez,DumazLabbe,McKean}).
For $d=2,3$, it is understood that the total mass moments blow up in finite
time \cite{AllezChouk,ChenTindelMoments,Labbe} (and thus Problem
\ref{Problem: Annealed} is intractable), first-order asymptotics for
Problems \ref{Problem: Quenched} and \ref{Problem: Eigenvalue} when $d=2$
were proved in \cite{ChoukVZ,KPZ} using paracontrolled calculus, 
and Problems \ref{Problem: Quenched} and \ref{Problem: Eigenvalue} for $d=3$ are open.

\subsection{Main Results}

We now proceed to an exposition of our main results
(Theorems \ref{Theorem: Main Regular},
\ref{Theorem: Main Singular}, and \ref{Theorem: Annealed}
below). For the remainder of this paper, unless otherwise mentioned,
we assume that $d\in\{1,2,3\}$.

\subsubsection{Asymptotically Singular Noise}

Throughout the paper, we consider the following type of smoothed noise:

\begin{definition}
$\xi_1$ is a continuous, centered,
and stationary Gaussian process on $\mbb R^d$ with covariance
\begin{align}
\label{Equation: xi1 Covariance}
\mbf E\left[\xi_1(x)\xi_1(y)\right]=R(x-y),\qquad x,y\in\mbb R^d.
\end{align}
We assume that we can write
$R=\bar R*\bar R$,
where the function $\bar R:\mbb R^d\to\mbb R$ satisfies the following conditions:
\begin{enumerate}
\item $\bar R$ is a probability density function,
\item $\bar R$ is an even function,
\item $\bar R$ is compactly supported, and
\item there exists some $h>0$ and $C>0$ such that
\begin{align}
\label{Equation: Holder Constant}
|\bar R(x)-\bar R(y)|\leq C|x-y|_2^h\qquad
\text{for every }x,y\in\mbb R^d.
\end{align}
\end{enumerate}
\end{definition}

Then, for every $\eps\in(0,1]$, we define the approximate AH and
PAM total mass as
\begin{align}
\label{Equation: Approx AH and PAM}
A_\eps:=\tfrac12\De+\xi_\eps
\qquad\text{and}\qquad
U_\eps(t):=\mbf E^0\left[\exp\left(\int_0^t\xi_\eps\big(B(s)\big)\d s\right)\right],
\end{align}
where $\xi_\eps(x):=\eps^{-d/2}\xi_1(x/\eps)$. We denote the Dirichlet eigenvalues
of $A_\eps$ on some bounded open set $\Om\subset\mbb R^d$ as
$\La_1(A_\eps,\Om)\geq\La_2(A_\eps,\Om)\geq\cdots$.

\begin{remark}
If we denote
\[\bar R_\eps(x):=\eps^{-d}\bar R(x/\eps)
\qquad\text{and}\qquad
R_\eps(x):=\eps^{-d}R(x/\eps),\]
then $R_\eps=\bar R_\eps*\bar R_\eps$, and $\xi_\eps$ has
covariance $R_\eps$. Hence, $\xi_\eps\deq\xi*\bar R_\eps$,
where $\xi$ is a WN. Though it is more common to
use $\xi*\bar R_\eps$ as the definition of the smoothed noise, in this paper we use
the coupling $\xi_\eps(x)=\eps^{-d/2}\xi_1(x/\eps)$ for convenience,
as doing so does not affect our main results (see Remark \ref{Remark: In Probability}).
\end{remark}

Our aim in this paper is to propose to study the large-$t$ asymptotics
of the AH/PAM with WN by considering {\it asymptotically singular noise}.
That is, we study the behavior of $\La_k(A_{\eps(t)},Q_t)$ and $U_{\eps(t)}(t)$
as $t\to\infty$, where the approximation parameter $\eps(t)$ goes to zero as
$t\to\infty$. The hope is that
\begin{enumerate}
\item if $\eps(t)\to0$ at a fast enough rate, then the
asymptotics of $\La_k(A_{\eps(t)},Q_t)$ and $U_{\eps(t)}(t)$
carry insight into those of the AH/PAM with WN, and
\item since we are only ever considering objects with
continuous noise, the asymptotics of $\La_k(A_{\eps(t)},Q_t)$
and $U_{\eps(t)}(t)$ can be accessed with
elementary methods (at least comparatively
to regularity structures/paracontrolled calculus).
\end{enumerate}

\subsubsection{Quenched Phase Transitions}

In this paper, we take the first steps in actualizing the above-described
program: Using only elementary methods
(i.e., standard operator/semigroup theory, suprema of continuous Gaussian processes, etc.),
we prove that the first-order asymptotics in Problems \ref{Problem: Quenched} and
\ref{Problem: Eigenvalue}
exhibit a $``$phase transition$"$ in the rate $\eps(t)$ at which $\xi_{\eps(t)}$ becomes singular
as $t\to\infty$. To this effect, our first main result states that if $\eps(t)$ is not too small,
then the first-order quenched total mass and eigenvalue
asymptotics behave as though $\eps(t)$ is constant.
We call this regime of $\eps(t)$ the regular phase.

\begin{definition}[Regular Phase]
The function $\eps(t)\in(0,1]$ ($t\geq0$) is in the regular phase
if $\eps(t)\gg(\log t)^{-1/(4-d)}$ as $t\to\infty$.
\end{definition}

\begin{theorem}[Regular Phase]
\label{Theorem: Main Regular}
Let $\eps(t)$ be in the regular phase.
\begin{align}
\label{Equation: Main Regular Eigenvalue}
\lim_{t\to\infty}\frac{\La_k(A_{\eps(t)},Q_t)}{\eps(t)^{-d/2}\sqrt{\log t}}
=\sqrt{2dR(0)}\qquad\text{in probability}
\end{align}
for every $k\in\mbb N$, and
\begin{align}
\label{Equation: Main Regular TM}
\lim_{t\to\infty}\frac{\log U_{\eps(t)}(t)}{t\,\eps(t)^{-d/2}\sqrt{\log t}}
=\sqrt{2dR(0)}\qquad\text{in probability}.
\end{align}
\end{theorem}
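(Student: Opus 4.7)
The core object to analyze is the supremum $M_\eps(t):=\sup_{x\in Q_t}\xi_\eps(x)$ of the stationary Gaussian field over the box. Since $\xi_\eps$ has variance $\eps^{-d}R(0)$, is $(2\eps\,\mr{diam}\,\mr{supp}\,\bar R)$-dependent, and has a canonical distance controlled by the H\"older assumption on $\bar R$, standard Gaussian suprema theory (the Borell--TIS concentration inequality together with a Dudley-type upper bound on $\mbf E[M_\eps(t)]$, plus a matching lower bound obtained by splitting $Q_t$ into $(t/\eps)^d$ essentially independent cells and applying the max of i.i.d.\ Gaussians) yields
\[
\frac{M_{\eps(t)}(t)}{\eps(t)^{-d/2}\sqrt{\log t}}\longrightarrow\sqrt{2dR(0)}\qquad\text{in probability},
\]
provided $\log(1/\eps(t))=o(\log t)$, which holds throughout the regular phase. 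Both theorems then reduce to showing that $\La_k(A_{\eps(t)},Q_t)$ and $(1/t)\log U_{\eps(t)}(t)$ are each asymptotic to $M_{\eps(t)}(t)$.

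For the eigenvalue statement \eqref{Equation: Main Regular Eigenvalue}, the upper bound is immediate from the Rayleigh quotient: for any $f\in H^1_0(Q_t)$ of unit $L^2$-norm, $\langle A_\eps f,f\rangle=-\tfrac12\|\nabla f\|_2^2+\int\xi_\eps f^2\leq M_\eps(t)$, hence $\La_k(A_\eps,Q_t)\leq\La_1(A_\eps,Q_t)\leq M_\eps(t)$. The lower bound requires constructing $k$ orthogonal test functions. I would pick $k$ points $x_1^*,\dots,x_k^*\in Q_t$ with $\xi_\eps(x_i^*)\geq M_\eps(t)-o(M_\eps(t))$ and pairwise separation $\geq3\eps\,\mr{diam}\,\mr{supp}\,\bar R$ (whose existence follows from the exact independence of $\xi_\eps$ at sufficiently separated points together with the enormous number $\sim(t/\eps)^d$ of candidate $\eps$-cells), then use smooth bumps $\phi_i$ of width $r(t)$ centered at $x_i^*$. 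The resulting Rayleigh quotient yields
\[
\La_k(A_\eps,Q_t)\geq\min_i\xi_\eps(x_i^*)-O\bigl(1/r(t)^2\bigr)-\mr{osc}_{B_{r(t)}(x_i^*)}\xi_\eps,
\]
where the H\"older regularity of $\bar R$ forces the oscillation on a ball of radius $r(t)\ll\eps$ to be of order $(r(t)/\eps)^h\eps^{-d/2}$, which is negligible against $M_\eps(t)$.

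The main obstacle, and the source of the regular/singular phase boundary, is reconciling the two constraints on $r(t)$: the Laplacian penalty $1/r(t)^2$ must be $o(M_\eps(t))=o(\eps^{-d/2}\sqrt{\log t})$, forcing $r(t)\gg\eps^{d/4}(\log t)^{-1/4}$, while $r(t)\ll\eps$ is needed for $\xi_\eps$ to be essentially constant across $B_{r(t)}(x_i^*)$. These intervals overlap exactly when $\eps^{d/4}(\log t)^{-1/4}\ll\eps$, i.e., $\eps\gg(\log t)^{-1/(4-d)}$, which is the defining condition of the regular phase. In particular, the same construction \emph{must fail} beyond this threshold, which is what makes the phase transition non-trivial.

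For the total mass \eqref{Equation: Main Regular TM}, I would leverage the Feynman--Kac/spectral connection. The lower bound proceeds by pinning: force the Brownian motion to reach a near-maximizer $x^*\in Q_{t/\log t}$ in time $s_0=o(t)$ (costing $\leq|x^*|^2/(2s_0)=o(tM_\eps(t))$ in log-probability, since $|x^*|\leq t/\log t$ and $M_\eps(t)\to\infty$), then remain in $B_{r(t)}(x^*)$ for the remaining time $t-s_0$, contributing $(t-s_0)(M_{\eps}(t/\log t)-o(M_\eps(t)))\sim tM_\eps(t)$ (note $\log(t/\log t)\sim\log t$). The upper bound follows by splitting on whether the path remains in $Q_{R_t}$ with $R_t=t\log t$: the contained contribution is $\leq e^{tM_\eps(R_t)}=e^{(1+o(1))tM_\eps(t)}$, while the tail $\{\sup_{s\leq t}|B(s)|>R_t\}$ is handled via Cauchy--Schwarz against a bound on $\mbf E^0[\exp(2\int_0^t\xi_\eps(B))]$ obtained inductively, the Gaussian tail $e^{-R_t^2/(2t)}=e^{-t(\log t)^2/2}$ dominating any polynomial-in-$t$ exponent in the leading term.
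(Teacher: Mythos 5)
Your reduction to the extremal statistics of $\xi_{\eps(t)}$ over $Q_t$ is the right organizing idea, and you correctly identify the origin of the phase boundary $\eps(t)\sim(\log t)^{-1/(4-d)}$ as the tradeoff between the Laplacian penalty $1/r(t)^2$ and the localization scale $\eps(t)$. The eigenvalue upper bound and the lower bound via orthogonal bumps near near-maximizers are essentially right, though your route differs from the paper's: you place bumps of width $r(t)\ll\eps(t)$ around \emph{conditionally chosen} maximizers and bound the oscillation, which requires a uniform modulus of continuity for $\xi_{\eps(t)}$ over all of $Q_t$ (a union bound over the $(t/r)^d$ covering balls adds a $\sqrt{\log t}$ factor that still leaves $(r/\eps)^{h}\eps^{-d/2}\sqrt{\log t}=o(\eps^{-d/2}\sqrt{\log t})$ intact). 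The paper avoids this conditional/oscillation subtlety entirely by fixing a test profile $\phi$ of width $\asymp\eps(t)$ (not $\ll\eps(t)$), writing $\langle\xi_{\eps(t)},(\tau_z\phi^{(1/\eps(t))})^2\rangle$ for $z$ in a lattice with spacing forcing exact independence, applying the extreme-value law for i.i.d.\ Gaussians, and only at the very end sending $\phi^2\to\de_0$ so that $\|\phi^2\|_R^2\to R(0)$. This is cleaner, but your version can be made to work with the uniform oscillation estimate spelled out.

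The genuine gap is in the total-mass upper bound. A single cutoff at $R_t=t\log t$ does not close. After Cauchy--Schwarz, the far-excursion term is controlled by
\[
\mbf P\Big[\sup_{s\leq t}|B(s)|_\infty>R_t\Big]^{1/2}\,\mbf E^0\Big[\exp\Big(2\int_0^t\xi_{\eps(t)}(B(s))\,\mathrm ds\Big)\Big]^{1/2},
\]
and the only \emph{unconditional} handle you have on the second factor is its annealed size: by the Fubini/Gaussian computation, $\mbf E\big[U^{(2)}_{\eps(t)}(t)\big]\leq\exp\big(2R(0)\eps(t)^{-d}t^2\big)$ for $d=2,3$ (and $\exp(Ct^3)$ for $d=1$). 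The Gaussian tail $\exp(-cR_t^2/t)=\exp(-ct(\log t)^2)$ does not dominate $\exp(2R(0)\eps(t)^{-d}t^2)$ — the latter grows like $\exp(Ct^2)$ or faster, not $\exp(t\,\mathrm{polylog}\,t)$. Enlarging $R_t$ until the Gaussian tail wins requires $R_t\gg\eps(t)^{-d/2}t^{3/2}$, and then $\log R_t\geq\tfrac32\log t$, which inflates the contained contribution $e^{tM_{\eps(t)}(R_t)}$ by a constant $\sqrt{3/2}$ in the exponent and ruins the sharp constant $\sqrt{2dR(0)}$. The paper resolves this tension with a two-scale decomposition over nested boxes $Q_{r_k(t)}$ with $r_k(t)=(t\eps(t)^{-d/2}\sqrt{\log t})^k$: the $k=1$ shell is handled by Cauchy--Schwarz against the \emph{quenched} eigenvalue bound (so the exponent is $\sim t\eps(t)^{-d/2}\sqrt{\log t}$, much smaller than the annealed $\eps(t)^{-d}t^2$, while the Brownian cost $r_1(t)^2/t\sim t\eps(t)^{-d}\log t$ beats it), and the $k\geq2$ shells are handled by the annealed bound, where $r_k(t)^2/t\gtrsim t^{2k-1}$ now swamps $\eps(t)^{-d}t^2=O(t^3)$ thanks to the lower bound on $\eps(t)$ built into the phase definition. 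Your remark that $\mbf E^0[\exp(2\int\xi_{\eps(t)})]$ is ``obtained inductively'' gestures toward this but, without the geometric growth of the shell radii, the naive recursion ($R\to R'$, $\si\to2\si$, …) produces exponents $2^k t M_{\eps(t)}(t)$ against a tail cost that only grows linearly in $k$, so the sum diverges.
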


\begin{remark}
If we take $\eps(t)=1$ in Theorem \ref{Theorem: Main Regular},
then we recover the first-order asymptotics for the PAM with continuous
Gaussian noise $\xi_1$ in \cite[Theorem 5.1]{CarmonaMolchanov}.
\end{remark}

Our second main result states that if $\eps(t)\to0$ at a fast enough rate,
then the quenched total mass and eigenvalue
asymptotics are universal (i.e., independent of the
choice of $R$), and are given by a variational constant.
Moreover, this result identifies
$(\log t)^{-1/(4-d)}$ as the critical rate of decay at which
this transition occurs. We call this second regime the
singular phase.

\begin{definition}[Singular Phase]
$\eps(t)\in(0,1]$ is in the singular phase if
one of the following holds:
\begin{enumerate}
\item $d=1$ and $\eps(t)\ll(\log t)^{-1/(4-d)}$ as $t\to\infty$; or
\item $d=2,3$, the H\"older exponent $h$ in \eqref{Equation: Holder Constant}
satisfies $h>d/4$, and
\[(\log t)^{-1/(4-d)-\mf c_d}\ll\eps(t)\ll(\log t)^{-1/(4-d)}\]
as $t\to\infty$, where we define the constant
\begin{align}
\label{Equation: Singular Phase Lower Constant}
\mf c_d:=\frac{h}{d(d+h)}.
\end{align}
\end{enumerate}
\end{definition}

\begin{definition}[Variational Constant]
Let $\mf G_d\in(0,\infty)$ be the smallest possible constant in the
Gagliardo-Nirenberg-Sobolev
(GNS) inequality
\begin{align}
\label{Equation: GNS Intro}
\|\phi\|_4^4\leq \mf G_d\left(\int_{\mbb R^d}|\nabla\phi(x)|_2^2\d x\right)^{d/2}\|\phi\|_2^{4-d}
\qquad\text{for all }\phi\in C_0^\infty(\mbb R^d)
\end{align}
(since $2(d-2)\leq d$ holds for $d=1,2,3$,
we know that $\mf G_d<\infty$; e.g., \cite[(C.1)]{ChenBook}).
Then, we define the associated Lyapunov exponent
\begin{align}
\label{Equation: Lyapunov}
\mf L_d:=\frac{4-d}4\left(\frac d2\right)^{d/(4-d)}(2d\mf G_d)^{2/(4-d)}.
\end{align}
\end{definition}

\begin{theorem}[Singular Phase]
\label{Theorem: Main Singular}
Let $\eps(t)$ be in the singular phase.
\begin{align}
\label{Equation: Main Singular Eigenvalue}
\lim_{t\to\infty}\frac{\La_k(A_{\eps(t)},Q_t)}{(\log t)^{2/(4-d)}}
=\mf L_d\qquad\text{in probability}
\end{align}
for every $k\in\mbb N$, and
\begin{align}
\label{Equation: Main Singular TM}
\lim_{t\to\infty}\frac{\log U_{\eps(t)}(t)}{t\,(\log t)^{2/(4-d)}}
=\mf L_d\qquad\text{in probability}.
\end{align}
\end{theorem}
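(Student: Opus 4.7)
The plan is to establish matching upper and lower bounds on $\La_1(A_{\eps(t)},Q_t)$ of size $\mf L_d(\log t)^{2/(4-d)}$, extend the lower bound to all $\La_k$ by a disjoint-support min-max argument, and finally pass to the total mass via Feynman--Kac and a spectral expansion. Throughout I work from the Rayleigh--Ritz formula
\[
\La_1(A_\eps,Q_t)=\sup_{\phi\in C_0^\infty(Q_t),\,\|\phi\|_2=1}\left[\int_{Q_t}\xi_\eps(x)\phi(x)^2\d x-\tfrac12\|\nabla\phi\|_2^2\right],
\]
and exploit the identity in law $\xi_\eps\deq\xi*\bar R_\eps$ for a white noise $\xi$: for each admissible $\phi$, $\int\xi_\eps\phi^2$ is a centered Gaussian of variance $\|\bar R_\eps*\phi^2\|_2^2\le\|\phi\|_4^4$ by Young's inequality, with near-equality precisely when $\phi$ varies on scales much larger than $\eps$.

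For the lower bound, I take a GNS extremizer $\phi_*$ with $\|\phi_*\|_2=1$ and consider the translated rescalings $\phi_{x_0,\la}(x):=\la^{d/2}\phi_*(\la(x-x_0))$; a direct computation shows $\int\xi_\eps\phi_{x_0,\la}^2$ is Gaussian with variance $\la^d\|\phi_*\|_4^4(1+o(1))$ as $\la\eps\to 0$. Gaussian extreme-value asymptotics for the approximately stationary field $x_0\mapsto\int\xi_\eps\phi_{x_0,\la}^2$ over $Q_t$ then give $\sup_{x_0\in Q_t}\int\xi_\eps\phi_{x_0,\la}^2\gtrsim\sqrt{2d\la^d\|\phi_*\|_4^4\log t}$ in probability; subtracting $\tfrac12\la^2\|\nabla\phi_*\|_2^2$, optimizing in $\la$ (the maximizer is $\la_*\sim(\log t)^{1/(4-d)}$), and invoking the GNS equality $\|\phi_*\|_4^4=\mf G_d\|\nabla\phi_*\|_2^d$ reproduces exactly $\mf L_d(\log t)^{2/(4-d)}$. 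For $\La_k$, I partition $Q_t$ into $k$ disjoint sub-boxes of side $\asymp t$ and apply this construction in each, then invoke min-max with the resulting disjointly supported near-optimizers.

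The matching upper bound combines GNS with a chaining/entropy estimate: after truncating to $\|\nabla\phi\|_2\le M$ (values outside contributing a $-M^2/2$ penalty that drives the Rayleigh quotient below $\mf L_d(\log t)^{2/(4-d)}$), GNS bounds the variance of $\int\xi_\eps\phi^2$ uniformly by $\mf G_d M^d$, and a Dudley-entropy argument at scale $\la_*^{-1}$ on $Q_t$---with increments controlled via the H\"older bound \eqref{Equation: Holder Constant} on $\bar R$---yields $\sup_\phi\int\xi_\eps\phi^2\le(1+o(1))\sqrt{2d\mf G_d M^d\log t}$ with high probability, and optimizing in $M$ matches the lower bound. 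The total mass statement then follows by a standard Feynman--Kac spectral expansion on a slightly enlarged box $Q_{t+R(t)}$ (using that Brownian motion stays inside with overwhelming probability up to time $t$), in which the top eigenvalue dominates, yielding $\log U_{\eps(t)}(t)/t\to\mf L_d(\log t)^{2/(4-d)}$ in probability.

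The main obstacle I anticipate is the sharp chaining step in the upper bound: controlling the Gaussian field $\phi\mapsto\int\xi_\eps\phi^2$ over the infinite-dimensional unit $H^1$-ball tightly enough to recover the sharp constant $\mf L_d$ rather than a suboptimal multiple of it. The singular-phase constraint $\eps(t)\gg(\log t)^{-1/(4-d)-\mf c_d}$ enters precisely here to reconcile the two competing error scales---the entropy of the discretization versus the deviation of $\|\bar R_\eps*\phi^2\|_2^2$ from its white-noise value $\|\phi\|_4^4$ at the critical scale $\la_*$---and the exponent $\mf c_d=h/(d(d+h))$ emerges from balancing these via the H\"older regularity \eqref{Equation: Holder Constant}.
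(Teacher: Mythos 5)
Your lower bound and the overall outline (GNS extremizer rescalings, Gaussian extreme-value asymptotics for the spatial shift, Dudley chaining for the upper bound, Feynman--Kac and annular truncation for the total mass) are in the right spirit, and the lower bound is essentially what the paper does, modulo the fact that GNS extremizers need not be compactly supported so you must approximate. The genuine gap is in the upper bound. You write that truncating to $\|\nabla\phi\|_2\le M$ makes the complementary set harmless because of the $-M^2/2$ penalty, but this fails in the singular phase: for $\|\nabla\phi\|_2>M$ one only gets $\int\xi_{\eps(t)}\phi^2-\tfrac12\mc E(\phi)\le\sup_{Q_t}\xi_{\eps(t)}-\tfrac12 M^2$, and $\sup_{Q_t}\xi_{\eps(t)}\asymp\eps(t)^{-d/2}\sqrt{\log t}$, which is $\gg(\log t)^{2/(4-d)}$ whenever $\eps(t)\ll(\log t)^{-1/(4-d)}$. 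So a single cutoff at the natural scale $M\asymp(\log t)^{1/(4-d)}$ cannot kill the tail; the tail blows past the target precisely because the noise is singular. You would need a full stratification in $M$ together with a Dudley/entropy bound that is uniform (with the sharp constant) over all $M$ simultaneously, plus a union bound over the stratification, and none of this is addressed.

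The paper sidesteps exactly this difficulty in two ways that your sketch omits. First, it decouples the spatial and functional suprema via a localization upper bound (Lemma \ref{Lemma: Localization Upper Bound}), so the chaining is only over the \emph{fixed} compact function space on a small box, not jointly over location and shape on $Q_t$; the spatial part becomes a simple union bound over $O(t^d)$ boxes. Second, and more importantly, it uses Chen's $W$-space normalization: replace $\phi$ with $\psi=\phi(1+\tfrac12\mc E(\phi))^{-1/2}\in W$, so the event $\{\sup_\phi(\si a^{(4-d)/2}\langle\xi,\phi^2\rangle-\tfrac12\mc E(\phi))>1\}$ is contained in $\{\sup_{\psi\in W}\si a^{(4-d)/2}\langle\xi,\psi^2\rangle>1\}$ by the identity $\sup_{\upsilon>0}(\mf A(1+\upsilon)-\upsilon)=\mf A\,\mbf 1_{\{\mf A\le1\}}+\infty\cdot\mbf 1_{\{\mf A>1\}}$. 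This absorbs the Dirichlet penalty into the geometry of the index set and removes the $M$-stratification entirely, leaving a single bounded Gaussian process to chain. The constant $\mf c_d$ and the requirement $h>d/4$ then emerge from the convergence of the resulting Dudley integral (Proposition \ref{Proposition: Continuous Upper Bound Asymptotics}), which you correctly anticipate but which requires the $W$-space reduction to run. Without one of these two devices, your upper bound does not close.
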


We now end the statement of Theorems \ref{Theorem: Main Regular}
and \ref{Theorem: Main Singular} with some remarks:

\begin{remark}
When $d=1,2$, the asymptotics in Theorem
\ref{Theorem: Main Singular} match that of the
AH/PAM with WN proved by Chen, Chouk, K\"onig, Perkowski, and van Zuijlen in \cite{Chen12,ChoukVZ,KPZ}.
We refer to Section \ref{Section: Applications} for more
details on the applications of our results to the
AH/PAM with WN.
\end{remark}

\begin{remark}
When $d\geq4$, we can prove that no phase transition occurs.
More specifically, the asymptotics remain in the regular phase no how matter quickly
$\eps(t)\to0$ (see Remark \ref{Remark: Phase Transition Explanation}
for a heuristic and Theorem \ref{Theorem: d4} and
Sections \ref{Section: Remark on d4 1} and \ref{Section: Remark on d4 2}
for specifics). In particular, this lack of phase transition provides a different
point of view with which to explain that the AH/PAM with WN do not
make sense in $d\geq4$.
\end{remark}

\begin{remark}
\label{Remark: Renormalization}
The lower bound of $(\log t)^{-1/(4-d)-\mf c_d}\ll\eps(t)$ in the singular phase for $d=2,3$
is due to the fact that a technical argument fails when $\eps(t)$ is too small
(see \eqref{Equation: Continuous Dudley}
in Proposition \ref{Proposition: Continuous Upper Bound Asymptotics}).
While we make no claim that \eqref{Equation: Singular Phase Lower Constant} is optimal for
Theorem \ref{Theorem: Main Singular} to hold, some kind of lower bound
is to be expected in $d=2,3$, since in those cases the AH/PAM must be renormalized
to obtain nontrivial $\eps\to0$ limits.
We point to Section \ref{Section: Applications} below for more details on this point.
\end{remark}

\begin{remark}
It would be interesting to see if asymptotics that interpolate between
Theorems \ref{Theorem: Main Regular} and \ref{Theorem: Main Singular}
could be obtained in a ``critical phase" of the form
\[\eps(t)=C(\log t)^{-1/(4-d)}\big(1+o(1)\big),\qquad t\to\infty\]
for some $C>0$. As we were unable to obtain matching upper and lower bounds in this
regime, we leave it as an open question.
\end{remark}

\begin{remark}
\label{Remark: In Probability}
When comparing Theorems \ref{Theorem: Main Regular}
and \ref{Theorem: Main Singular} with the corresponding
results in \cite{CarmonaMolchanov,Chen12,ChoukVZ,KPZ},
it can be noted that the former all prove almost sure convergence,
whereas in this paper we only prove convergence in probability.
The argument typically used to prove almost sure convergence
in this context relies on the monotonicity of $\La_k(H,Q_t)$ in $t$
for every fixed operator $H$. Given that, in this paper, the operator
$A_{\eps(t)}$ changes with $t$, this argument can no longer be used
(except in the special case of the subcritical phase where $\eps(t)$ is constant).
\end{remark}

\subsubsection{Annealed Total Mass}

As mentioned earlier in this introduction, the moments of the PAM
total mass with WN are not finite for all $t>0$ in $d=2,3$. That
said, it is nevertheless natural to ask if the moments
$\mbf E\big[U_{\eps(t)}(t)^p\big]$ carry meaningful information
about intermittency in the PAM with WN when $\eps(t)$
is very small. The following result suggests that this may not the case
when $d=2,3$:

\begin{theorem}
\label{Theorem: Annealed}
Let $\eps(t)\in(0,1]$ for $t\geq0$.
On the one hand, if
\begin{enumerate}
\item $d=1$ and $\eps(t)\gg t^{-1}$, or
\item $d\geq2$,
\end{enumerate}
then for every $p\in\mbb N$,
\begin{align}
\label{Equation: Regular Annealed Total Mass}
\lim_{t\to\infty}\frac{\log\mbf E\big[U_{\eps(t)}(t)^p\big]}{\eps(t)^{-d}t^2}=\frac{p^2R(0)}{2}.
\end{align}
On the other hand, if $d=1$ and $\eps(t)\ll t^{-1}$ as $t\to\infty$,
then there exists some constants $0<\theta_1\leq\theta_2<\infty$
independent of $R$ such that
for every $p\in\mbb N$,
\begin{align}
\label{Equation: Singular Annealed Total Mass}
\theta_1p^3\leq\liminf_{t\to\infty}\frac{\log\mbf E\big[U_{\eps(t)}(t)^p\big]}{t^3}\leq
\limsup_{t\to\infty}\frac{\log\mbf E\big[U_{\eps(t)}(t)^p\big]}{t^3}\leq\theta_2p^3.
\end{align}
\end{theorem}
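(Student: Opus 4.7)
The starting point for both regimes is the Gaussian moment computation
\begin{equation*}
\mbf E[U_\eps(t)^p]=\mbf E^{0,\otimes p}\!\left[\exp\!\left(\tfrac12\sum_{i,j=1}^p\int_0^t\!\int_0^t R_\eps\big(B_i(s)-B_j(u)\big)\d s\d u\right)\right],
\end{equation*}
obtained by integrating out the centered Gaussian functional $\sum_i\int_0^t\xi_\eps(B_i(s))\d s$ from the Feynman--Kac representation, where $B_1,\ldots,B_p$ are i.i.d.\ Brownian motions started at $0$ and independent of $\xi_\eps$. Writing $\tfrac12 Q_\eps(t)$ for the quantity inside the exponential, the whole theorem reduces to sharp two-sided asymptotics of this multiple Brownian expectation.

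For the regular regime, the upper bound in \eqref{Equation: Regular Annealed Total Mass} is immediate from the pointwise inequality $R_\eps(x)\le R_\eps(0)=R(0)\eps^{-d}$, which gives $Q_\eps(t)\le p^2 R(0)\eps^{-d}t^2$. For the matching lower bound, I restrict to the small-ball event $E_{t,\eps,\de}:=\{\sup_{s\le t,\,i\le p}|B_i(s)|\le\de\eps\}$ for a small $\de>0$: on $E_{t,\eps,\de}$ all arguments of $R_\eps$ lie within $2\de\eps$ of $0$, so continuity of $R$ at $0$ combined with the scaling $R_\eps(\cdot)=\eps^{-d}R(\cdot/\eps)$ yields $Q_\eps(t)\ge p^2 t^2\eps^{-d}(R(0)-\om(\de))$ with $\om(\de)\downarrow 0$, while standard small-ball estimates give $\mbf P(E_{t,\eps,\de})\ge\exp(-C_pt/(\de\eps)^2)$. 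The Brownian entropy cost $t/\eps^2$ is dominated by the leading term $t^2\eps^{-d}$ precisely when $\eps^{d-2}\ll t$: automatic for $d\ge2$ and equivalent to $\eps\gg t^{-1}$ for $d=1$. Letting $\de\downarrow 0$ afterwards (slowly enough that $\om(\de)\to 0$ while the entropy remains subdominant) closes the regular phase.

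For the singular regime ($d=1$ and $\eps\ll t^{-1}$), Brownian scaling $B_i(s)=\sqrt t\,\tilde B_i(s/t)$ rewrites the exponent as $\tfrac12 Q_\eps(t)\deq\tfrac{t^{3/2}}{2}\widetilde Q_{\tilde\eps}(1)$ with $\tilde\eps:=\eps/\sqrt t\ll t^{-3/2}$, and the factorization $R_{\tilde\eps}=\bar R_{\tilde\eps}*\bar R_{\tilde\eps}$ gives $\widetilde Q_{\tilde\eps}(1)=\|\sum_{i=1}^p L_i^{\tilde\eps}\|_2^2$ where $L_i^{\tilde\eps}(z):=\int_0^1\bar R_{\tilde\eps}(z-\tilde B_i(s))\d s$ is a mollified Brownian local time. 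For the upper bound, Young's inequality gives $\|L_i^{\tilde\eps}\|_2\le\|\mc L_i\|_2$ with $\mc L_i$ the true Brownian local time, and then Cauchy--Schwarz yields $\widetilde Q_{\tilde\eps}(1)\le p\sum_i\|\mc L_i\|_2^2$; independence of the $B_i$'s reduces the problem to the single-Brownian large-$\la$ asymptotic $\log\mbf E[\exp(\la\|\mc L_1\|_2^2)]\sim c\la^2$, which is precisely the $d=1$ white-noise PAM first-moment asymptotic from \cite{Chen14} after the scaling $\|\mc L_T\|_2^2\deq T^{3/2}\|\mc L_1\|_2^2$. Plugging $\la=pt^{3/2}/2$ and raising to the $p$-th power delivers the upper bound $\theta_2 p^3 t^3$. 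For the lower bound, the Hölder hypothesis on $\bar R$ together with the Hölder continuity in $z$ of $\mc L_i^z$ allows one to show $\widetilde Q_{\tilde\eps}(1)\ge(1-o(1))\|\sum_i\mc L_i\|_2^2$ with high probability, and the matching lower asymptotic for the $d=1$ white-noise PAM then supplies $\theta_1 p^3 t^3$.

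The main technical obstacle is the approximation $\widetilde Q_{\tilde\eps}(1)\approx\|\sum_i\mc L_i\|_2^2$ in the singular phase: because we exponentiate a random variable of order $t^{3/2}$ and send $\tilde\eps\to 0$ only polynomially fast in $t$, any multiplicative slack of size $o(1)$ must be quantitative enough (in probability, uniform in the Brownian trajectories) to survive exponentiation at the $t^3$ scale. This forces careful control on the rate of convergence of mollified Brownian local times, exploiting both the Hölder exponent $h$ of $\bar R$ and the Hölder regularity of $z\mapsto\mc L_i^z$, and is what makes the singular regime substantially more delicate than the regular one.
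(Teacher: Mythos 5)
Your regular-phase argument and your singular-phase upper bound mirror the paper's proof almost exactly: both regimes start from the Gaussian moment formula, the regular upper bound is the trivial pointwise bound $R_\eps\le R_\eps(0)$, the regular lower bound restricts to a small-ball event and balances $p^2 R(0)\eps^{-d}t^2$ against the sub-Gaussian confinement cost $\asymp t/\eps^2$ (which is exactly where the $\eps\gg t^{-1}$ condition enters for $d=1$), and the singular upper bound rewrites the exponent as an $L^2$-norm of a sum of mollified local times, applies Young plus Cauchy--Schwarz to pass to true local times, and invokes the $\log\mbf E[\exp(\la\|L_t\|_2^2)]\asymp\la^2 t^3$ asymptotic; all of this coincides with the paper.

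The singular-phase \emph{lower} bound is where your route genuinely diverges, and where there is a real gap. You propose to show $\|\bar R_{\tilde\eps}*\mbf L\|_2^2\ge(1-o(1))\|\mbf L\|_2^2$ on a high-probability event, then invoke the known $d=1$ white-noise PAM lower asymptotic. But as you yourself note, this requires the $o(1)$ loss and the exceptional set to be quantitatively controlled at a scale finer than $e^{-ct^3}$, uniformly over Brownian trajectories. This is not a minor technicality: the Hölder modulus of $z\mapsto\mc L^z$ is \emph{random}, so there is no deterministic $\delta_t\to0$ with $\|\bar R_{\tilde\eps}*\mbf L\|_2\ge(1-\delta_t)\|\mbf L\|_2$ almost surely, and cutting off the bad event requires controlling $\mbf E[e^{ct^{3/2}\|\mbf L\|_2^2}\mbf 1_{E^c}]$, which is of the same exponential order as the quantity one is trying to bound. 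You flag the obstacle but do not resolve it, so as written the lower bound is incomplete. The paper sidesteps this entirely with a different idea: it proves the \emph{almost-sure} pathwise inequality $\sum_{i,j}\int R_{\eps}(B^i-B^j)\ge\sum_{i,j}\int(R_\eps*\ms G_\eta)(B^i-B^j)$ via Fourier positivity (since $\hat{\ms G_\eta}\le1$ and the occupation-measure quadratic form is positive definite), chooses a smoothing scale $\eta(t)\asymp(pt)^{-2}\gg\eps(t)^2$, restricts to $\sup_{s\le t}|B^i(s)|\le\eta(t)^{1/2}$ so that $R_\eps*\ms G_\eta\gtrsim\eta^{-1/2}$ on the relevant range, and then optimizes $p^2t^2\eta^{-1/2}-Cpt\eta^{-1}$ over $\eta$ to produce $\theta_1 p^3 t^3$ directly, with no appeal to white-noise PAM results and no local-time approximation. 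That Fourier monotonicity lemma is the key ingredient your proposal is missing; without it, or an explicit resolution of the quantitative local-time approximation you describe, the lower bound of \eqref{Equation: Singular Annealed Total Mass} is not established.
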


\begin{remark}
When $\eps(t)=1$, Theorem \ref{Theorem: Annealed} reduces to
the moment asymptotics \cite[Theorem 4.1]{CarmonaMolchanov}
for the PAM with continuous Gaussian noise $\xi_1$.
In particular, when $d\geq2$, Theorem \ref{Theorem: Annealed}
shows that no matter how small we take $\eps(t)$,
the moment asymptotics never transition to a universal limit
independent of $R$.
In contrast, if $d=1$ and $\eps(t)\ll t^{-1}$, then we recover
in \eqref{Equation: Singular Annealed Total Mass}
the annealed asymptotics for the one-diemsional PAM with WN proved in \cite[(6.8)]{Tindeletal}.
\end{remark}

\subsection{Other Results and Applications}

We now discuss how our paper relates to the wider
literature, taking this opportunity to showcase an application of our results to
the study of the AH/PAM with WN in Corollary \ref{Corollary: Application}.

\subsubsection{Other Noise Scalings}

We note that the present paper is not the first
to study spectral asymptotics of AH- and PAM-type
objects whose noise depends on the parameter being sent to infinity
(or zero),
including the occurrence of a phase transition.

One the one hand, in \cite{MerklWuthrich1,MerklWuthrich2} Merkl and W\"uthrich
consider the largest eigenvalue of the operator $\frac12\De-V_t$
on the box $Q_t$, where $V_t$ is of the form
\[V_t(x):=\frac{\be}{\phi(t)^{2}}\sum_iW(x-x_i),\qquad x\in\mbb R^d\]
for some scale function $\phi$, shape function $W$, $\be>0$,
and Poisson point process $(x_i)_i$.
In particular, they identify the presence of a phase
transition in the asymptotics when $d\geq4$ and $\phi(t)=(\log t)^{1/d}$
in terms of the parameter $\be$ (i.e., there exists a dimension-dependent critical
$\be_c>0$ such that the asymptotics differ if $\be<\be_c$ or $\be>\be_c$).
Then, they study the
behavior of the so-called Brownian motion in the scaled
Poissonian potential $V_t$ (the analog of the PAM in their setting).
Although the broad outline of the strategies used in those papers
(especially \cite{MerklWuthrich1}) is similar to the present paper,
the details are very different due to the nature of the random potentials
and their scaling in $t$
(i.e., a Poissonian potential with a multiplicative factor
versus a Gaussian potential with both a multiplicative factor
and a space scaling).

On the other hand, in \cite{BFG1,BFG2}, Biskup, Fukushima, and K\"{o}nig study the $\eps\to0$ asymptotics
of the top eigenvalues of discrete operators of the form $\eps^{-2}\De-\xi^{(\eps)}$
on lattices that approximate some bounded domain $D\subset\mbb R^d$ (i.e.,
the space between lattice points is of order $\eps$). Here, $\De$ is the lattice
Laplacian, and $\xi^{(\eps)}$ is an independent random field with a properly scaled expectation
and variance. More specifically, they establish convergence in probability of the eigenvalues
of $\eps^{-2}\De-\xi^{(\eps)}$ to that of a deterministic $``$homogenized$"$ continuum Schr\"odinger
operator on $D$ whose potential is determined by $\mbf E[\xi^{(\eps)}]$. They also prove
Gaussian fluctuations of the eigenvalues about their expectations, where the limiting
covariance depends on $\mbf{Var}[\xi^{(\eps)}]$. While the setting in \cite{BFG1,BFG2} differs
from the present paper in various significant ways (in particular, both the domain of the operators
and the variance $\mbf{Var}[\xi^{(\eps)}]$ are bounded as $\eps\to0$), it nevertheless raises the interesting question
of whether analogs of Theorems \ref{Theorem: Main Regular} and
\ref{Theorem: Main Singular} can be proved if the approximations $A_{\eps(t)}$ and $U_{\eps(t)}$
are constructed using properly scaled lattice WN instead of a continuous smoothing
of the WN. If that is possible, then some ideas from \cite{BFG1,BFG2} would be expected
to be fruitful; we leave this direction open.

\subsubsection{Finer Asymptotics}

As shown in \eqref{Equation: Main Regular Eigenvalue}
and \eqref{Equation: Main Singular Eigenvalue},
the eigenvalues $\La_k(A_{\eps(t)},Q_t)$ are all asymptotically
equivalent in the first order (as $t\to\infty$). We expect that,
with a finer scaling, one could identify
the fluctuations of the eigenvalues and thus uncover a
nontrivial point process limit. Such a result would complement
previous investigations in this direction, such as the Poisson
point process limits uncovered by Dumaz and Labb\'e in \cite{DumazLabbe} for the
one-dimensional WN (see also \cite{Astrauskas} and references
therein for a survey of such results in the discrete setting).

In a similar vein, it is natural to wonder if a phase transition
also occurs in the second order asymptotics of the PAM.
If that is the case, then we expect that, in similar fashion to
\cite{GartnerKonig,GartnerKonigMolchanov}, understanding the
transition of the smaller order asymptotics would provide new information
on intermittency in the PAM with WN;
more specifically, in clarifying the connection (if any) between the
variational constant $\mf L_d$ in \eqref{Equation: Lyapunov} and the
local geometry of intermittent peaks.

We leave both of these questions open for future investigations.

\subsubsection{Renormalizations and Rate of Convergence}
\label{Section: Applications}

As mentioned in Remark \ref{Remark: Renormalization}, the approximate
AH and PAM need to be renormalized in order to give a nontrivial $\eps\to0$ limit
in $d=2,3$. More specifically, as shown in \cite{HairerLabbe,HairerLabbe2,Labbe},
for every fixed $t>0$, one has
\begin{align}
\label{Equation: Renormalization}
\La_k(A,Q_t):=\lim_{\eps\to0}(\La_k(A_\eps,Q_t)-c_\eps)
\qquad\text{and}\qquad
U(t):=\lim_{\eps\to0}U_\eps(t)\mr e^{-tc_\eps},
\end{align}
where the renormalization constant $c_\eps$ blows up as $\eps\to0$
on the order of $|\log\eps|$ when $d=2$
and $\eps^{-1}$ when $d=3$ (up to constants and lower order terms).

Given that $\mf c_d<1/d$ by \eqref{Equation: Singular Phase Lower Constant},
when $d=2,3$ in the singular phase we always assume that $\eps(t)\gg(\log t)^{-1/(4-d)-1/d}$,
which implies that $c_{\eps(t)}\ll(\log t)^{2/(4-d)}$. Thus, the fact that we do not
include renormalization constants in Theorem \ref{Theorem: Main Singular} does
not contradict \eqref{Equation: Renormalization}. That said, if $\eps(t)$ vanishes
so quickly that $c_{\eps(t)}\gg(\log t)^{2/(4-d)}$, then it is not clear that we can expect
the asymptotics of
\[\frac{\La_k(A_{\eps(t)},Q_t)}{(\log t)^{2/(4-d)}}
\qquad\text{and}\qquad
\frac{\log U_{\eps(t)}(t)}{t\,(\log t)^{2/(4-d)}}\]
to be meaningful without the renormalizations $\La_k(A_{\eps(t)},Q_t)-c_{\eps(t)}$
and $U_{\eps(t)}(t)\mr e^{-tc_{\eps(t)}}$.

In light of this, one of the main insights of this paper is that
the asymptotics of the AH/PAM with WN can
be accessed even without requiring the use of renormalizations,
so long as $\eps(t)$ is not too big or small.
In fact, a comparison of Theorem \ref{Theorem: White Noise}
with known results for the AH/PAM with WN
provides quantitative upper bounds on the difference between the
latter and their smooth approximations for large $t$:
Among the main results of \cite{Chen14} (for $d=1$)
and \cite{ChoukVZ,KPZ} (for $d=2$) are the following:

\begin{theorem}
\label{Theorem: White Noise}
Let $A$ and $U(t)$ be the AH and PAM total mass with WN in $d=1,2$.
\[\lim_{t\to\infty}\frac{\La_k(A,Q_t)}{(\log t)^{2/(4-d)}}
=\mf L_d\qquad\text{in probability}\]
for every $k\in\mbb N$, and
\[\lim_{t\to\infty}\frac{\log U(t)}{t\,(\log t)^{2/(4-d)}}
=\mf L_d\qquad\text{in probability}.\]
\end{theorem}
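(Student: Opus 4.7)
The plan is to derive Theorem \ref{Theorem: White Noise} as a corollary of Theorem \ref{Theorem: Main Singular} by using the renormalization convergences \eqref{Equation: Renormalization} to transfer asymptotics from the smoothed AH/PAM to the true white noise objects. Concretely, I would fix $d\in\{1,2\}$, choose a slowly-vanishing schedule $\eps(t)$ sitting inside the singular phase, and show that the renormalization constant $c_{\eps(t)}$ is negligible compared to the scaling rate $(\log t)^{2/(4-d)}$; this sidesteps the need to carry $c_{\eps(t)}$ inside the limit. In $d=1$ no renormalization is needed, so the argument reduces to showing that the random variables $\La_k(A_{\eps(t)},Q_t)$ and $\La_k(A,Q_t)$ differ by $o((\log t)^{2/3})$ in probability along a suitable schedule. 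In $d=2$, one has $c_\eps=C|\log\eps|+o(|\log\eps|)$, so choosing for instance $\eps(t)=(\log t)^{-\alpha}$ with $\alpha\in\bigl(\tfrac{1}{4-d},\tfrac{1}{4-d}+\mf c_d\bigr)$ places $\eps(t)$ in the singular phase while ensuring $c_{\eps(t)}=O(\log\log t)\ll\log t=(\log t)^{2/(4-d)}$.

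The key technical step is then to promote the for-fixed-$t$ limit \eqref{Equation: Renormalization} into a joint statement along the diagonal $\eps=\eps(t)$. I would do this by importing quantitative convergence rates for $\La_k(A_\eps,Q_t)-c_\eps\to\La_k(A,Q_t)$ and $U_\eps(t)\mr e^{-tc_\eps}\to U(t)$ from the solution theories invoked in \cite{HairerLabbe,HairerLabbe2,Labbe} (regularity structures / paracontrolled calculus), pairing these bounds with a union-bound / truncation argument over an increasing sequence of times $t_n\to\infty$. If such quantitative rates are available with polynomial control in $t$, then choosing $\eps(t)\to0$ sufficiently slowly within the singular-phase window makes the approximation error $o((\log t)^{2/(4-d)})$ with high probability. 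Dividing by $(\log t)^{2/(4-d)}$ and invoking Theorem \ref{Theorem: Main Singular} (which provides convergence in probability of the smoothed eigenvalues to $\mf L_d$) then yields the claim for $\La_k(A,Q_t)$; the argument for $\log U(t)$ runs in parallel, with the scaling $t\,(\log t)^{2/(4-d)}$ and the extra factor $\mr e^{-tc_{\eps(t)}}$, noting that $tc_{\eps(t)}=O(t\log\log t)\ll t(\log t)^{2/(4-d)}$ in $d=2$.

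The main obstacle will be securing the uniform-in-$t$ convergence rate in Step 2. The convergences in \eqref{Equation: Renormalization} are stated for each fixed $t$, and the error in passing $\eps\to0$ presumably worsens as $t$ grows (since $A_\eps$ acts on the growing domain $Q_t$). Controlling this error along the diagonal $\eps=\eps(t)$ requires either a quantitative version of the construction of $A$ and $U(t)$ that tracks dependence on the domain $Q_t$, or an alternative strategy based on sandwiching $\La_k(A,Q_t)$ between $\La_k(A_{\eps_1(t)},Q_t)-c_{\eps_1(t)}$ and $\La_k(A_{\eps_2(t)},Q_t)-c_{\eps_2(t)}$ for two different schedules $\eps_1(t),\eps_2(t)$ that both lie in the singular phase. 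A completely different route, of course, is to forgo the approximation strategy altogether and argue directly at the level of the renormalized operator $A$ itself: establish the variational upper bound via the renormalized GNS inequality developed in the paracontrolled framework, and the matching lower bound by plugging in near-optimizers of the GNS inequality \eqref{Equation: GNS Intro} localized near a high point of a regularized version of $\xi$. This is essentially the route taken in \cite{ChoukVZ,KPZ}, and while it is technically heavier, it avoids the uniformity-in-$t$ issue entirely.
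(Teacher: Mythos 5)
The statement you are trying to prove is not established in this paper at all: Theorem \ref{Theorem: White Noise} is cited verbatim from the literature (the text immediately preceding it says ``Among the main results of \cite{Chen14} (for $d=1$) and \cite{ChoukVZ,KPZ} (for $d=2$) are the following''), and the paper simply imports it as a black box. The logical flow in Section \ref{Section: Applications} is the \emph{reverse} of what your first route proposes: the paper combines its own Theorem \ref{Theorem: Main Singular} with the externally known Theorem \ref{Theorem: White Noise} to deduce Corollary \ref{Corollary: Application}. The paper then explicitly flags that if the quantitative estimates \eqref{Equation: Quantitative Estimate 1}--\eqref{Equation: Quantitative Estimate 2} could be established \emph{independently}, one would obtain ``a new elementary proof of Theorem \ref{Theorem: White Noise} in $d=2$'' --- and leaves precisely that as an open question. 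Your first route is therefore exactly this open problem, dressed up as a proof.

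The specific gap is the one you yourself call ``the main obstacle'': you need a convergence rate for $\La_k(A_\eps,Q_t)-c_\eps\to\La_k(A,Q_t)$ that is uniform (or at least controllably non-uniform) in the domain size $Q_t$, so that it can be applied along a diagonal $\eps=\eps(t)$. Nothing in the paper supplies such a rate, and \eqref{Equation: Renormalization} is genuinely a fixed-$t$ statement. Without it, the chain Theorem \ref{Theorem: Main Singular} $\Rightarrow$ Theorem \ref{Theorem: White Noise} does not close, and there is no way to extract the needed estimate from the material in the paper. Your correct observation that $c_{\eps(t)}\ll(\log t)^{2/(4-d)}$ along a schedule $\eps(t)=(\log t)^{-\alpha}$ with $\alpha\in(1/(4-d),1/(4-d)+\mf c_d)$ matches the remark made around \eqref{Equation: Renormalization}, but that only addresses the deterministic renormalization constant, not the random discrepancy $\La_k(A,Q_t)-(\La_k(A_{\eps(t)},Q_t)-c_{\eps(t)})$.

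Your ``completely different route'' (argue directly at the level of the renormalized operator via a renormalized GNS inequality, as in \cite{ChoukVZ,KPZ}) is in fact what the cited works do, and is what the paper appeals to; but you do not carry it out, and it is well outside the scope of this paper's elementary toolkit. In short: the proposal correctly maps the landscape but proves nothing, because the paper's ``proof'' of Theorem \ref{Theorem: White Noise} is a citation, and the one concrete ingredient your first route would need (a diagonal-ready quantitative renormalization rate) is precisely what the paper identifies as missing.
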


By combining the above with Theorem \ref{Theorem: White Noise},
we obtain the following:

\begin{corollary}
\label{Corollary: Application}
Let $d=1,2$ and $\eps(t)$ be in the singular phase.
\begin{align}
\label{Equation: Quantitative Estimate 1}
\lim_{t\to0}\frac{|\La_k(A,Q_t)-\La_k(A_{\eps(t)},Q_t)|}{(\log t)^{2/(4-d)}}
=0\qquad\text{in probability}
\end{align}
for every $k\in\mbb N$, and
\begin{align}
\label{Equation: Quantitative Estimate 2}
\lim_{t\to0}\frac{\big|\log U(t)-\log U_{\eps(t)}(t)\big|}{t(\log t)^{2/(4-d)}}=0\qquad\text{in probability}.
\end{align}
\end{corollary}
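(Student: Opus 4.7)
The plan is to deduce both limits as immediate consequences of Theorems \ref{Theorem: Main Singular} and \ref{Theorem: White Noise}, exploiting the fact that both theorems produce the same deterministic limit $\mf L_d$ on the scalings $(\log t)^{2/(4-d)}$ and $t(\log t)^{2/(4-d)}$, respectively. The argument is a standard triangle inequality for convergence in probability, so there is no real obstacle; the content of the corollary lies in the matching of the two limits, not in the deduction itself.

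For the eigenvalue statement \eqref{Equation: Quantitative Estimate 1}, I would first write
\[\frac{\La_k(A,Q_t)-\La_k(A_{\eps(t)},Q_t)}{(\log t)^{2/(4-d)}}=\left(\frac{\La_k(A,Q_t)}{(\log t)^{2/(4-d)}}-\mf L_d\right)-\left(\frac{\La_k(A_{\eps(t)},Q_t)}{(\log t)^{2/(4-d)}}-\mf L_d\right).\]
By Theorem \ref{Theorem: White Noise}, the first parenthesis tends to $0$ in probability as $t\to\infty$, and by Theorem \ref{Theorem: Main Singular} (which applies because $\eps(t)$ is assumed to be in the singular phase, for both $d=1$ and $d=2$) the same holds for the second. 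A standard union bound
\[\mbf P\big(|X_t-Y_t|>\delta\big)\leq\mbf P\big(|X_t-\mf L_d|>\tfrac{\delta}{2}\big)+\mbf P\big(|Y_t-\mf L_d|>\tfrac{\delta}{2}\big)\]
applied with $X_t,Y_t$ equal to the two scaled eigenvalue quantities then yields \eqref{Equation: Quantitative Estimate 1} for each fixed $k\in\mbb N$.

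The total mass statement \eqref{Equation: Quantitative Estimate 2} follows by the exact same two-step argument, with $\La_k(A,Q_t)$ and $\La_k(A_{\eps(t)},Q_t)$ replaced by $\log U(t)$ and $\log U_{\eps(t)}(t)$, and the normalization $(\log t)^{2/(4-d)}$ replaced by $t(\log t)^{2/(4-d)}$; the total-mass halves of Theorems \ref{Theorem: White Noise} and \ref{Theorem: Main Singular} supply the required convergences to $\mf L_d$. The only point worth emphasizing is that the singular-phase hypothesis on $\eps(t)$ ensures the applicability of Theorem \ref{Theorem: Main Singular} without any additional condition, so the proof is complete once the two theorems are invoked.
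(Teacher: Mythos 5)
Your proof is correct and is exactly the intended argument: the paper itself presents Corollary \ref{Corollary: Application} as an immediate consequence of combining Theorem \ref{Theorem: Main Singular} with Theorem \ref{Theorem: White Noise}, with no further detail supplied. The triangle-inequality/union-bound deduction you spell out is the standard and only step needed, so there is nothing to add.
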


In particular, if the estimates \eqref{Equation: Quantitative Estimate 1}
and \eqref{Equation: Quantitative Estimate 2} can be independently
established, then this would provide a new elementary proof of
Theorem \ref{Theorem: White Noise} in $d=2$.
It would also be interesting to see if similar results can be proved in $d=3$;
although to the best of our knowledge, an analog of Theorem \ref{Theorem: White Noise}
is not yet proved in this case.
We leave such questions open for future investigations.

\subsection{Organization}

The remainder of this paper is organized as follows.
In Section \ref{Section: Strategy}, we discuss
the strategy of proof for Theorems \ref{Theorem: Main Regular}
and \ref{Theorem: Main Singular}, including an intuitive explanation
of why the phase transition therein occurs at the critical rate
$(\log t)^{-1/(4-d)}$.
In Section \ref{Section: Setup}, we introduce the notation used in
our paper and state various classical results that lie
at the heart of our proof. In Section
\ref{Section: Eigenvalues} we prove the eigenvalue asymptotics
\eqref{Equation: Main Regular Eigenvalue} and \eqref{Equation: Main Singular Eigenvalue},
in Section \ref{Section: Quenched Proof} we prove the total mass asymptotics
\eqref{Equation: Main Regular TM} and \eqref{Equation: Main Singular TM},
and in Section \ref{Section: Annealed Proof}
we prove Theorem \ref{Theorem: Annealed}.

\section{Proof Strategy for Theorems \ref{Theorem: Main Regular} and \ref{Theorem: Main Singular}}
\label{Section: Strategy}

\subsection{PAM Total Mass Reduces to Leading Eigenvalue Asymptotics}

The main ingredient of the proofs of
\eqref{Equation: Main Regular TM} and
\eqref{Equation: Main Singular TM} consists
of the heuristic
\begin{align}
\label{Equation: Informal TM Principle}
U_{\eps(t)}(t)\approx\mr e^{t\La_1(A_{\eps(t)},Q_t)}\qquad\text{as }t\to\infty,
\end{align}
which completely reduces the quenched total mass
asymptotics to the exponential of the leading eigenvalue.
A rigorous version of this heuristic can be achieved by using
semigroup theory/the Feynman-Kac formula, as shown in
Sections \ref{Section: Semigroup and Feynman-Kac} and \ref{Section: Quenched Proof}.

\begin{remark}
As per \eqref{Equation: Informal TM Principle}, the
PAM asymptotics are only determined by the behavior of the
leading eigenvalue $\La_1(A_{\eps(t)},Q_t)$. That said, we nevertheless include a
statement for $\La_k(A_{\eps(t)},Q_t)$, $k\geq2$, in Theorems
\ref{Theorem: Main Regular} and \ref{Theorem: Main Singular} for the following reasons:
\begin{enumerate}
\item The asymptotics of $\La_k(A_{\eps(t)},Q_t)$ are of independent interest from
the point of view of the spectral theory of random Schr\"odinger operators; and
\item as we will show in Section \ref{Section: Reduction of Eigenvalue Asymptotics},
once the asymptotics of $\La_1(A_{\eps(t)},Q_t)$ are established, those of
$\La_k(A_{\eps(t)},Q_t)$ more or less immediately follow using a simple argument;
hence the additional statement does not require a more involved proof.
\end{enumerate}
\end{remark}

This type of argument for computing the asymptotics
of the total mass dates back to at least the work
of G\"artner and Molchanov \cite[Sections 2.4 and 2.5]{GartnerMolchanov2},
and was used in several more papers since then
(e.g., \cite{Chen12,Chen14,GartnerKonig,GartnerKonigMolchanov}).
The particular implementation of the argument used
in this paper most closely resembles that of
\cite{Chen14} (more specifically, see \cite[Sections 3 and 4]{Chen14}).
From the technical point of view, the argument deployed in
this paper is simultaneously simpler and more involved than
that of \cite{Chen14}: On the one hand, the fact that we do
not deal with noises that are Schwartz distributions allows to
sidestep a number of technical hurdles encountered in \cite{Chen14},
such as the approximation arguments
\cite[(2.22) and Sections 3, 4, and A.1]{Chen14}. On the other
hand, the need to consider a different noise (namely,
$\xi_{\eps(t)}$) for every value of $t$ and to distinguish
between two regimes of $\eps(t)$ increases the complexity
of some arguments.

\subsection{Eigenvalue Asymptotics}

We now discuss how the eigenvalue asymptotics \eqref{Equation: Main Regular Eigenvalue}
and \eqref{Equation: Main Singular Eigenvalue} are obtained. In Section
\ref{Section: Reduction of Eigenvalue Asymptotics}, we show that the eigenvalues
$\La_k(A_{\eps(t)},Q_t)$ for $k\geq2$ have the same asymptotics as
$\La_1(A_{\eps(t)},Q_t)$; hence we only need to prove asymptotics for the
leading eigenvalue.
By the min-max principle, we can write
\begin{align}
\label{Equation: Min-Max Informal}
\La_1(A_\eps,Q_t)=\sup_{\phi\in C_0^\infty(Q_t),~\|\phi\|_2=1}\big(\langle \xi_\eps,\phi^2\rangle-\tfrac12 \mc E(\phi)\big),
\end{align}
where $C_0^\infty(Q_t)$ denotes the set of smooth and compactly supported
functions on the box $Q_t$, and we use $\mc E$ as shorthand for the Dirichlet form induced by $\De$
(i.e., \eqref{Equation: Dirichlet Form}).
In this expression, we note that there is a competition between two terms:

On the one hand, maximizing $\langle \xi_\eps,\phi^2\rangle$
provides an incentive for $\phi$ to allocate all of its mass at the maximum of $\xi_\eps$ on $Q_t$.
More specifically, by the $L^1/L^\infty$ H\"older inequality, we have that
\[\sup_{\phi\in C_0^\infty(Q_t),~\|\phi\|_2=1}\langle \xi_\eps,\phi^2\rangle\leq\sup_{x\in Q_t}|\xi_\eps(x)|,\]
where the supremum over $\phi$ is achieved (at least formally) at any delta Dirac distribution $\de_{x_{\eps,t}}$
such that the point $x_{\eps,t}$ achieves $\xi_\eps$'s supremum on $Q_t$'s closure.
On the other hand, the term $-\frac12\mc E(\phi)$ penalizes functions with very substantial
variations, such as functions that are very close to a Dirac distribution. Thus, while we expect that the
eigenfunction that achieves the supremum in \eqref{Equation: Min-Max Informal} is localized near $\xi_{\eps}$'s
maximum on $Q_t$, its gradient cannot be too large.

Then, understanding the asymptotics of $\La_1(A_{\eps(t)},Q_t)$
is a matter of identifying the contributions of both of these effects in the large-$t$ limit. In this paper,
this is carried out using so-called localization bounds (see Section \ref{Section: Localization Bounds} for the details
as well as references identifying previous works where this idea has already appeared).
That is, we partition $Q_t$ into smaller sub-boxes $B_1(t),B_2(t),\ldots,B_n(t)$,
hoping that one of the $B_i(t)$'s will contain the bulk of the mass of the leading eigenfunction
(which itself is localized near $\xi_{\eps(t)}$'s maximizer on $Q_t$); hence
\begin{align}
\label{Equation: Localization Informal}
\La_1(A_{\eps(t)},Q_t)\approx\max_{1\leq i\leq n}\La_1\big(A_{\eps(t)},B_i(t)\big).
\end{align}
The task of understanding the eigenvalue asymptotics is now reduced to
\begin{enumerate}
\item identifying the size that the $B_i(t)$'s must have as $t\to\infty$
to capture the bulk of the mass of the leading eigenfunction (which relies
on understanding the tradeoff between $\langle \xi_\eps,\phi^2\rangle$
and $-\tfrac12\mc E(\phi)$ in \eqref{Equation: Min-Max Informal}); and
\item analyzing the asymptotics of the maximum on the right-hand side of \eqref{Equation: Localization Informal}.
As $\xi_{\eps(t)}$ is Gaussian, this relies on the extreme value theory of independent Gaussian fields
(as $\eps(t)\to0$, $\xi_{\eps(t)}$ becomes uncorrelated over very small distances).
\end{enumerate}

We now provide a heuristic based on a variety of previous results that serves as the main
guide in carrying out the above, that explains the asymptotics obtained in Theorems
\ref{Theorem: Main Regular} and \ref{Theorem: Main Singular}, and that explains why
a transition occurs at $\eps(t)\sim(\log t)^{-1/(4-d)}$.

\subsection{Eigenvalue Scaling Heuristics}

\subsubsection{Case $\eps(t)=1$}

The starting point of our heuristic is the second order asymptotics for the total
mass found by G\"artner, K\"onig, and Molchanov in \cite{GartnerKonigMolchanov}.
Combining Theorem 1.1 in that paper with the asymptotic equivalence
in \eqref{Equation: Informal TM Principle}, we have the following result, which
settles the special case where $\eps(t)=1$ in Theorem \ref{Theorem: Main Regular}:

\begin{theorem}[\cite{GartnerKonigMolchanov}]
Let us denote
\begin{align}
\label{Equation: ht}
L_t:=\sqrt{2dR(0)\log t}
\end{align}
and
\begin{align}
\label{Equation: chi}
l_t:=\frac{\mr{Tr}\big[\big(-R''(0)\big)^{1/2}\big]}{2}\left(\frac{2d}{R(0)}\log t\right)^{1/4},
\end{align}
where $R''$ denotes the Hessian matrix of the covariance.
As $t\to\infty$, one has
\begin{align}
\label{Equation: 2nd order}
\La_1(A_1,Q_t)=L_t-l_t+o(l_t).
\end{align}
\end{theorem}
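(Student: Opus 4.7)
The plan is to obtain matching upper and lower bounds via the Rayleigh-quotient characterization
\[
\La_1(A_1,Q_t) = \sup_{\phi\in C_0^\infty(Q_t),\,\|\phi\|_2=1}\bigl(\langle\xi_1,\phi^2\rangle - \tfrac12\mc E(\phi)\bigr),
\]
combined with a sharp localization of the leading eigenfunction near the global maximizer of $\xi_1$ on $Q_t$ and a harmonic-oscillator approximation on the resulting local length scale. The first-order part $L_t$ comes from the extreme-value theory of a stationary Gaussian field, and the correction $-l_t$ emerges from the quadratic profile of the field near its maximum.

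First I would recover $L_t$. The upper bound $\La_1(A_1,Q_t)\leq\sup_{x\in Q_t}\xi_1(x)$ is immediate from the $L^1/L^\infty$ H\"older estimate in the variational formula, and classical extreme-value theory for stationary smooth Gaussian fields (Borell--TIS together with a Pickands-type computation) gives $\sup_{x\in Q_t}\xi_1(x)=L_t+O(1)$ with high probability. The matching lower bound of $L_t$ follows by plugging a fixed-scale smooth bump centered at the maximizer $x_t$ into the Rayleigh quotient.

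To identify the correction $-l_t$, I would refine the analysis in a small neighborhood of $x_t$. By the Gaussian conditional identity, given $\xi_1(x_t)=u$ the field $\xi_1(x_t+y)$ has conditional mean $u\,R(y)/R(0)$ and a centered residual $\eta(y)$ of bounded variance. Taylor expanding $R(y)=R(0)+\tfrac12 y^T R''(0)y+o(|y|^2)$ on the relevant scale yields
\[
\xi_1(x_t+y)\approx L_t+\tfrac{L_t}{2R(0)}\,y^T R''(0) y+\eta(y).
\]
Substituting this expansion into the Rayleigh quotient restricted to test functions supported in a ball $B(x_t,\delta_t)$, and discarding $\eta$ for the heuristic, the local problem becomes the ground state of the anisotropic harmonic oscillator $-\tfrac12\De+\tfrac{L_t}{2R(0)}\,y^T(-R''(0))y$, whose ground state energy is $\tfrac12(L_t/R(0))^{1/2}\,\mr{Tr}[(-R''(0))^{1/2}]$; a direct rearrangement using $L_t=\sqrt{2dR(0)\log t}$ reproduces exactly $l_t$. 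The ground state lives on the scale $\delta_t\sim L_t^{-1/4}\sim(\log t)^{-1/8}$, which is much smaller than the macroscopic box but much larger than the microscopic scale on which the cubic terms in $R$ dominate; this scaled harmonic-oscillator eigenfunction serves as the lower-bound trial function, while the upper bound combines the expansion above with the localization estimates to be developed in Section~\ref{Section: Localization Bounds}.

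The hardest step will be making the harmonic-oscillator approximation rigorous to precision $o(l_t)$, which requires simultaneously controlling (i) the cubic and higher Taylor remainder of $R$ on scale $\delta_t$, (ii) the Gaussian residual $\eta$ restricted to $B(x_t,\delta_t)$ via a Dudley/entropy bound on its supremum and on its quadratic form against $\phi^2$, and (iii) the behavior of the conditioned process uniformly over the (random) location of the maximizer $x_t$, which in practice is handled by covering $Q_t$ by polynomially many small boxes, running the local analysis on each with the appropriate conditional law, and taking a union bound. A subtle additional point is to rule out near-ties among the top local maxima of $\xi_1$, or at least to show that such near-ties contribute only at order $o(l_t)$, so that the variational problem truly reduces to a single harmonic oscillator centered at $x_t$.
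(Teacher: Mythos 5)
The paper does not prove this theorem: it is stated as an immediate consequence of \cite[Theorem 1.1]{GartnerKonigMolchanov} together with the total-mass/eigenvalue equivalence \eqref{Equation: Informal TM Principle}, so there is no in-paper proof to compare against. Your sketch is a plausible reconstruction of the strategy that underlies the cited result (min-max characterization, localization near the global maximizer, Taylor expansion of the conditional mean of the field, reduction of the local variational problem to an anisotropic harmonic oscillator whose ground-state energy produces $l_t$). That is genuine content, but it is a strategy description rather than a proof: the four items you list at the end --- controlling the cubic Taylor remainder of $R$ on the relevant scale, the Dudley/entropy bound for the conditional residual $\eta$, uniformity over the random location $x_t$ via covering and union bounds, and ruling out near-ties among the top local maxima to precision $o(l_t)$ --- are precisely where all of the difficulty lives, and none of them is carried out. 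A referee would not accept this as a proof of the $o(l_t)$-precision statement.

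One substantive point worth recording: your identification of the ground-state scale $\delta_t\sim L_t^{-1/4}\sim(\log t)^{-1/8}$ is the one that is internally consistent, since $\delta_t^{-2}\sim(\log t)^{1/4}\sim l_t$ and the quadratic-potential term $\sim L_t\delta_t^2$ is of the same order, which is exactly the balance that produces a correction of order $l_t$. The heuristic text immediately following this theorem in the paper instead asserts a localization scale $s_t:=(2dR(0)\log t)^{-1/2}\sim(\log t)^{-1/2}$; on that scale the kinetic term $\tfrac12\mc E(\phi)\sim s_t^{-2}\sim\log t$ would dominate $L_t\sim(\log t)^{1/2}$, forcing a negative Rayleigh quotient, so $s_t$ cannot be the width of the ground state. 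The two formulas happen to agree at the critical rate $\eps(t)=(\log t)^{-1/(4-d)}$, which is presumably why the paper's $\tilde s_t$ nevertheless reproduces the correct microbox size $(\log t)^{-1/(4-d)}$ in the singular phase; but for $\eps=1$ yours is the right one. This is a slip in the paper's heuristic discussion rather than in anything your argument relies on, and it does not affect the stated theorem.
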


On the one hand, the leading order term $L_t$ is solely
determined by the tendency of the eigenfunction to localize near
$\xi_1$'s maximum on $Q_t$. Indeed, the specific form
of \eqref{Equation: ht} is due to the
classical
extreme value theory of Gaussian processes:

\begin{lemma}
\label{Lemma: LLN Gaussian Maxima}
Suppose that $X$ is a centered stationary Gaussian process on $\mbb R^d$
or $\mbb Z^d$, assuming that $X$ has continuous sample paths if it is on $\mbb R^d$.
If the covariance
$\mbf E[X(0)X(x)]$ of $X$ vanishes as $|x|_2\to\infty$,
then
\begin{align}
\label{Equation: Gaussian Log Supremum}
\lim_{t\to\infty}\sup_{x\in Q_t}\frac{X(x)}{\sqrt{\log t}}=\sqrt{2d\mbf E[X(0)^2]}\qquad\text{almost surely.}
\end{align}
\end{lemma}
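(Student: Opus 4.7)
Normalize by writing $\sigma^2 := R(0) = \mbf E[X(0)^2]$ and establishing matching almost-sure upper and lower bounds for the ratio $\sup_{Q_t}X/\sqrt{\log t}$.

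\textbf{Upper bound.} The key input is that, on the unit cell $[0,1]^d$ (or on $[0,1]^d\cap\mbb Z^d$ in the lattice case), the sample-path continuity assumption guarantees that $K:=\mbf E\bigl[\sup_{x\in[0,1]^d}X(x)\bigr]<\infty$. Borell--TIS (Gaussian concentration) then yields
\[
\mbf P\Bigl(\sup_{x\in[0,1]^d}X(x)\geq K+u\Bigr)\leq \exp\bigl(-u^2/(2\sigma^2)\bigr),\qquad u>0.
\]
Covering $Q_t$ by $O(t^d)$ translates of $[0,1]^d$ and using stationarity plus a union bound produces
\[
\mbf P\Bigl(\sup_{x\in Q_t}X(x)\geq K+u\Bigr)\leq C t^d\exp\bigl(-u^2/(2\sigma^2)\bigr).
\]
Setting $u=(1+\eta)\sigma\sqrt{2d\log t}$ for arbitrary $\eta>0$ and summing along the dyadic sequence $t_n=2^n$ gives a summable series, so Borel--Cantelli followed by monotonicity of $t\mapsto\sup_{Q_t}X$ (to interpolate between dyadic scales) and letting $\eta\downarrow0$ yields $\limsup_{t\to\infty}\sup_{Q_t}X/\sqrt{\log t}\leq\sigma\sqrt{2d}$ almost surely.

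\textbf{Lower bound.} Fix a small $\delta>0$. Since $R(x)\to 0$ as $|x|_2\to\infty$, there exists $r_\delta>0$ with $|R(x)|\leq\delta\sigma^2$ for $|x|_2\geq r_\delta$. Select a maximal $r_\delta$-separated set of lattice points $\{x_1,\ldots,x_{n_t}\}\subset Q_t$, so that $n_t\asymp(t/r_\delta)^d$. The resulting centered Gaussian vector $(X(x_i))_{i=1}^{n_t}$ has all off-diagonal correlations bounded in absolute value by $\delta$. The plan is to apply the normal comparison lemma (Berman's inequality) to show that $\max_{i\leq n_t}X(x_i)$ behaves to leading order like the maximum of $n_t$ i.i.d.\ $N(0,\sigma^2)$ variables, for which the classical Gumbel-type asymptotics give $\max_i/\sqrt{2\log n_t}\to\sigma$. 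Since $\sqrt{2\log n_t}=\sigma\sqrt{2d\log t}\bigl(1-o(1)\bigr)$ as $t\to\infty$ (the factor $r_\delta$ is absorbed into a lower-order correction), one concludes $\liminf_{t\to\infty}\sup_{Q_t}X/\sqrt{\log t}\geq\sigma\sqrt{2d}(1-o_\delta(1))$ in probability; upgrading to almost sure convergence is done, as for the upper bound, by Borel--Cantelli along $t_n=2^n$, now using that $\sup_{Q_t}X\geq\sup_{Q_{t_n}}X$ for $t\geq t_n$. Letting $\delta\downarrow 0$ closes the argument.

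\textbf{Main obstacle.} The upper bound is essentially routine once Borell--TIS is invoked. The delicate step is the lower bound: because $X$ is genuinely correlated, one must carve out an approximately independent sub-family whose cardinality is still comparable to $t^d$ (up to a constant not depending on $t$), and then quantify the error made in pretending that the selected Gaussians are independent. The cleanest tool is Berman's normal comparison inequality, which bounds the difference $|\mbf P(\max X(x_i)\leq u)-\prod_i\Phi(u/\sigma)|$ by a weighted sum of the off-diagonal correlations; one must check that, with $|\rho_{ij}|\leq\delta$ and $n_t\asymp t^d$, this error is negligible compared to the main i.i.d.\ asymptotics at the relevant threshold $u=(1-\eta)\sigma\sqrt{2d\log t}$, which requires $\delta$ to be chosen small in terms of $\eta$. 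Once that verification is made, the remainder of the proof is a standard sub-sequence extraction.
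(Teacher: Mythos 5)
Your upper bound is correct and is the standard argument (Borell--TIS on unit cells, union bound over $O(t^d)$ translates, dyadic Borel--Cantelli, monotone interpolation). The paper itself does not reprove this Lemma but cites Carmona--Molchanov (Section~2.1) for $\mathbb{R}^d$ and Pickands (Theorem~3.4) for $\mathbb{Z}^d$, so this part is fine.

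The lower bound has a genuine gap. With your $r_\delta$-separated set of $n\asymp t^d$ points and the uniform bound $|\rho_{ij}|\leq\delta$, Berman's normal comparison gives
\[
\Bigl|\mathbf{P}\bigl(\max_{i\leq n}X(x_i)/\sigma\leq u\bigr)-\Phi(u)^n\Bigr|\leq C\sum_{i<j}|\rho_{ij}|\,\mathrm{e}^{-u^2/(1+|\rho_{ij}|)}\leq C\,n^2\delta\,\mathrm{e}^{-u^2/(1+\delta)},
\]
and at the threshold $u=(1-\eta)\sqrt{2\log n}$ this equals $C\delta\,n^{2-2(1-\eta)^2/(1+\delta)}$. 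Since $(1-\eta)^2<1\leq 1+\delta$ the exponent $2-2(1-\eta)^2/(1+\delta)$ is strictly positive for every fixed $\delta>0$ and $\eta\in(0,1)$, so the error term \emph{diverges} as $n\to\infty$ regardless of how small $\delta$ is; the remark that $\delta$ should be ``chosen small in terms of $\eta$'' does not fix this. (The refined bookkeeping by distance only closes under Berman's condition $r(n)\log n\to 0$, which the Lemma does not assume.) The correct device here is not a comparison to i.i.d.\ but Slepian's inequality against the equicorrelated surrogate $Y_i:=\sqrt{\delta}\,\sigma W+\sqrt{1-\delta}\,\sigma W_i$ with $W,W_1,\dots,W_n$ i.i.d.\ standard Gaussians: your separation gives $\mathbf{E}[X(x_i)X(x_j)]\leq\delta\sigma^2=\mathbf{E}[Y_iY_j]$, so $\mathbf{P}(\max_i X(x_i)\leq u)\leq\mathbf{P}(\max_i Y_i\leq u)$. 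Writing $\max_i Y_i=\sqrt{\delta}\,\sigma W+\sqrt{1-\delta}\,\sigma\max_i W_i$, conditioning on $W$, and a routine Gaussian tail estimate on $W$ give a bound that is summable along dyadic $t_n$ whenever $(1-\eta)<\sqrt{1-\delta}$; letting $\delta\to 0$ then recovers $\sigma\sqrt{2d}$. The key distinction is that Slepian-to-equicorrelated costs only a multiplicative $\sqrt{1-\delta}$ in the constant, whereas Berman-to-i.i.d.\ produces an unbounded additive error.
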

\begin{proof}
We refer to \cite[Section 2.1]{CarmonaMolchanov} and references therein
for \eqref{Equation: Gaussian Log Supremum} in the continuous case.
For the discrete case, we point to \cite[Theorem 3.4]{Pickands}
(\cite{Pickands} is only stated in $d=1$,
but its argument can adapted to $d\geq2$ with only trivial modifications).
\end{proof}

On the other hand, as explained in \cite[Section 1.6]{GartnerKonigMolchanov},
the second order term $-l_t$ comes from the contribution
of $-\frac12\mc E(\phi)$. More specifically, the leading eigenfunction will localize
in a box of approximate size
\begin{align}
\label{Equation: st}
s_t:=(2dR(0)\log t)^{-1/2},
\end{align}
and the trace in \eqref{Equation: chi} contains information
about the local geometry of the leading eigenfunction near its maximum.

In summary, if $\eps(t)=1$, then $\langle\xi_{1},\phi^2\rangle$ dominates
$-\frac12\mc E(\phi)$; hence the only contribution in the first order asymptotics
comes from $\xi_{1}$'s maximum over $Q_t$.

\subsubsection{Subcritical Phase}

The general statement of \eqref{Equation: Main Regular Eigenvalue}
and \eqref{Equation: Main Singular Eigenvalue}
can be seen as an extension of the argument for $\eps(t)=1$
to its most general incarnation in the setting of asymptotically singular noise:
By a straightforward rescaling (i.e., \eqref{Equation: L2 Rescaling}),
it can be seen that for every $t>0$,
$\La_1(A_{\eps(t)},Q_t)$ is equal to the leading Dirichlet
eigenvalue of the operator $\eps(t)^{-2}(\tfrac12\De+\eps(t)^{(4-d)/2}\xi_1)$
on the box $Q_{t/\eps(t)}$. If we apply this rescaling to the
quantities in \eqref{Equation: 2nd order} (i.e., replace $R$ by $\eps(t)^{(4-d)}R$ and scale space by $\eps(t)$), then this suggests the
following: If we denote
\begin{align}
\label{Equation: ht 2}
\tilde L_t&:=\eps(t)^{(4-d)/2}\sqrt{2dR(0)\log t},\\
\label{Equation: chi 2}
\tilde l_t&:=\eps(t)^{(4-d)/4}\frac{\mr{Tr}\big[\big(-R''(0)\big)^{1/2}\big]}{2}\left(\frac{2d}{R(0)}\log t\right)^{1/4},\\
\label{Equation: st 2}
\tilde s_t&=\eps(t)^{-(4-d)/2}(2dR(0)\log t)^{-1/2}\cdot\eps(t),
\end{align}
then we expect from \eqref{Equation: 2nd order} that
\[\La_1(A_{\eps(t)},Q_t)
=\eps(t)^{-2}\big(\tilde L_t-\tilde l_t+o(\tilde l_t)\big),\]
where $\tilde L_t$ is determined by the maximum of $\xi_{\eps(t)}$ on $Q_t$,
$-\tilde l_t$ comes from the contribution of $-\frac12\mc E(\phi)$,
and the leading eigenfunction is localized near this maximum in a box of size
$\tilde s_t$. Then, given that for $d=1,2,3$,
\begin{align}
\label{Equation: Phase Transition Explanation 0}
\eps(t)^{(4-d)/2}\sqrt{\log t}\gg\eps(t)^{(4-d)/4}(\log t)^{1/4}
\end{align}
if and only if $\eps(t)\gg(\log t)^{-1/(4-d)}$, we infer that the subcritical phase
coincides precisely with the regime where $\tilde L_t\gg\tilde l_t$,
that is, the regime where
the leading order asymptotics of $\La_1(A_{\eps(t)},Q_t)$ are solely determined by
the maximum of $\xi_{\eps(t)}$ on $Q_t$.
At this point, by noting that
\[\eps(t)^{-2}\tilde L_t=\eps(t)^{-d/2}\sqrt{\log t}\sqrt{2d R(0)}\big(1+o(1)\big)\qquad\text{as }t\to\infty,\]
we recover the subcritical eigenvalue asymptotics claimed in \eqref{Equation: Main Regular Eigenvalue}.

\begin{remark}
\label{Remark: Phase Transition Explanation}
The fact that this argument relies crucially
on \eqref{Equation: Phase Transition Explanation 0} also explains
why there is no phase
transition when $d\geq4$ in Theorems \ref{Theorem: Main Regular}
and \ref{Theorem: Main Singular}: Indeed,
if $d\geq4$, then \eqref{Equation: Phase Transition Explanation 0}
always holds, and thus we expect that the leading order asymptotics of $\La_1(A_{\eps(t)},Q_t)$
should be determined by the maximum of $\xi_{\eps(t)}$ on $Q_t$ no matter how small
$\eps(t)$ is. See Theorem \ref{Theorem: d4} and
Sections \ref{Section: Remark on d4 1} and \ref{Section: Remark on d4 2} for the details.
\end{remark}

\subsubsection{Supercritical Phase}

If $\eps(t)$ vanishes faster than the critical threshold $(\log t)^{-1/(4-d)}$,
then \eqref{Equation: Phase Transition Explanation 0} is no longer true.
In particular, the terms $\tilde L_t$ and $\tilde l_t$
in \eqref{Equation: ht 2} and \eqref{Equation: chi 2} coalesce, and thus the leading
order asymptotics of $\La_1(A_{\eps(t)},Q_t)$ can no longer be expected to be
solely determined by the maximum of $\xi_{\eps(t)}$ on $Q_t$.

We expect that the asymptotics of $\La_1(A_{\eps(t)},Q_t)$ should in some sense
stabilize to that of $\La(A,Q_t)$ when $\eps(t)$ is very small. Thus, it is natural to hypothesize
that the magnitude of the terms corresponding to $\eps(t)^{-2}(\tilde L_t-\tilde l_t)$ and $\tilde s_t$ in this
regime can be obtained by replacing $\eps(t)$ in \eqref{Equation: ht 2}--\eqref{Equation: st 2}
by the value of the critical threshold $a(t):=(\log t)^{-1/(4-d)}$. Following this hypothesis, we expect that,
up to a constant,
\begin{align}
\label{Equation: 2nd order 3}
\La_1(A_{\eps(t)},Q_t)\asymp a(t)^{-2}\cdot a(t)^{(4-d)/2}\sqrt{\log t}=(\log t)^{2/(4-d)},
\end{align}
and that the corresponding eigenfunction localizes in a box of size (up to a constant)
\begin{align}
\label{Equation: st 3}
a(t)^{-(4-d)/2}(\log t)^{-1/2}\cdot a(t)=(\log t)^{-1/(4-d)}.
\end{align}
This heuristic is further corroborated by the facts that
\begin{enumerate}
\item The known asymptotics for the AH/PAM
with WN found in \cite{Chen14} for $d=1$
and \cite{ChoukVZ,KPZ} for $d=2$ correspond to \eqref{Equation: 2nd order 3} (c.f., Theorem \ref{Theorem: White Noise}); and
\item in the one-dimensional case \cite{Chen14}, Chen showed that the asymptotics in question
can be obtained with a localization argument of the form \eqref{Equation: Localization Informal}
with sub-boxes $B_i(t)$ of size $(\log t)^{-1/3}$ (up to a constant) as $t\to\infty$.
\end{enumerate}

From the technical standpoint (see the outline in steps (1)--(3) in \cite[Page 583]{Chen14}),
a crucial innovation
in \cite{Chen14} lies in relating
the maxima on the right-hand side of \eqref{Equation: Localization Informal}
for WN in $d=1$ to the
extreme value theory of the function-valued
Gaussian process
\[\phi\mapsto\langle\phi,\xi\rangle,\qquad \phi:\mbb R^d\to\mbb R\]
on carefully chosen function spaces that are amenable to computation.
Thus, one of the insights of this paper is that this type of argument can be
extended in $d=2,3$, so long
as we consider asymptotically singular noise $\xi_{\eps(t)}$
where $\eps(t)$ is not too large (or too small when $d=2,3$).

\section{Setup and Notation}
\label{Section: Setup}

\subsection{Basic Notations}

\begin{definition}
Given $1\leq p\leq\infty$,
we use $\|f\|_p$ to denote the
$L^p$ norm of a function $f:\mbb R^d\to\mbb R$,
and $|x|_p$ to denote the
$\ell^p$-norm of a vector $x\in\mbb R^d$.
We use $\langle f,g\rangle$
to denote the Euclidean inner product on $L^2(\mbb R^d)$, and
$f*g$ to denote the convolution.
\end{definition}

\begin{definition}
Given an open set $\Om\subset\mbb R^d$, we use $C_0^\infty(\Om)$
to denote the set of smooth functions $\phi:\mbb R^d\to\mbb R$ with
compact support in $\Om$. We denote the Dirichlet form of such functions as
\begin{align}
\label{Equation: Dirichlet Form}
\mc E(\phi):=\int_{\mbb R^d}|\nabla\phi(x)|_2^2\d x,
\end{align}
where $\nabla$ denotes the gradient, and we denote the function spaces
\begin{align}
\nonumber
S(\Om)&:=\{\phi\in C_0^\infty(\Om):\|\phi\|_2=1\},\\
\label{Equation: W Function Space}
W(\Om)&:=\{\phi\in C_0^\infty(\Om):\|\phi\|_2^2+\tfrac12 \mc E(\phi)=1\}.
\end{align}
\end{definition}

\begin{definition}
For every $z\in\mbb R^d$, we define the translation operator
\begin{align}
\label{Equation: Translation Operator}
\tau_z\phi(x):=\phi(x-z),\qquad \phi\in C_0^\infty(\mbb R^d).
\end{align}
For every $\eta>0$ and $\phi\in C_0^\infty(\mbb R^d)$,
we define the rescaled function
\begin{align}
\label{Equation: L2 Rescaling}
\phi^{(\eta)}(x):=\eta^{d/2}\phi(\eta x),\qquad x\in\mbb R^d.
\end{align}
\end{definition}

\begin{remark}
\label{Remark: L2 Rescaling}
It is easy to see that for every domain $\Om\subset\mbb R^d$ and $\eta>0$,
\begin{enumerate}
\item $\phi\in S(\eta\Om)$ if and only if $\phi^{(\eta)}\in S(\Om)$, and
\item $ \mc E(\phi^{(\eta)})
=\eta^2 \mc E(\phi)$.
\end{enumerate}
\end{remark}

\subsection{Covariance Semi Inner Product}

\begin{definition}
For every $\eps>0$, we denote the covariance semi inner product by
\[\langle f,g\rangle_{R_\eps}:=\langle f*\bar R_\eps,g*\bar R_\eps\rangle
=\int_{(\mbb R^d)^2}f(x)R_\eps(x-y)g(y)\d x\dd y\]
for $f,g:\mbb R^d\to\mbb R$, and we denote the associated seminorm by
\[\|f\|_{R_\eps}:=\sqrt{\langle f,f\rangle_{R_\eps}}.\]
In particular, since $R_\eps\to\de_0$, one has
\begin{align}
\label{Equation: Continuous Limit of Forms}
\lim_{\eps\to0}\langle\phi,\psi\rangle_{R_\eps}=\langle\phi,\psi\rangle,
\qquad\phi,\psi\in C_0^\infty(\mbb R^d).
\end{align}
\end{definition}

\subsection{Operator Theory and Localization Bounds}

\label{Section: Localization Bounds}

For every $\eps,\si>0$, let us denote the operator
\begin{align}
A^{(\si)}_\eps:=\tfrac12\De+\si\xi_\eps,
\end{align}
so that, in particular, $A_\eps=A^{(1)}_\eps$.
For any bounded and connected open set $\Om\subset\mbb R^d$,
the operator $-A^{(\si)}_\eps$
with Dirichlet boundary conditions on $\Om$
is self-adjoint on $L^2(\Om)$ and has compact resolvent,
and $C_0^\infty(\Om)$ is a core for its
quadratic form:
\[-\langle \phi, A^{(\si)}_\eps\phi\rangle:=-\si\langle \xi_\eps,\phi^2\rangle+\tfrac12 \mc E(\phi),\qquad\phi\in C_0^\infty(\Om)\]
(e.g., \cite[Example 3.16.4]{SimonBook};
we refer more generally to \cite[Section 7.5]{SimonBook} for the
operator-theoretic terminology used here).
In particular, it follows from the min-max principle
(e.g., \cite[Theorem 7.8.10]{SimonBook}) that
\begin{align}
\La_k(A^{(\si)}_\eps,\Om)
\label{Equation: Courant-Fischer}
=\sup_{\substack{\phi_1,\ldots,\phi_k\in C_0^\infty(\Om)\\\langle\phi_i,\phi_j\rangle=0~\forall i\neq j}}
~\inf_{\substack{\phi\in\mr{span}(\phi_1,\ldots,\phi_k)\\\|\phi\|_2=1}}
\big(\si\langle \xi_\eps,\phi^2\rangle-\tfrac12 \mc E(\phi)\big),\qquad k\in\mbb N,
\end{align}
with matching eigenfunctions
forming an orthonormal basis of $L^2(\Om)$.
The following localization bounds are the main technical tools in our analysis of the
asymptotics of the $A^{(\si)}_\eps$'s eigenvalues:

\begin{lemma}\label{Lemma: Localization Lower Bound}
For every $\eps>0$ and bounded open sets $\Om\subset\mbb R^d$, $\Om_1,\ldots,\Om_n\subset\Om$,
\[\La_1(A^{(\si)}_\eps,\Om)\geq\max_{i=1,\ldots,n}\La_1(A^{(\si)}_\eps,\Om_i).\]
\end{lemma}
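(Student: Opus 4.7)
The plan is to apply the variational characterization \eqref{Equation: Courant-Fischer} with $k=1$ to both sides, and to exploit the fact that any test function supported in a subdomain $\Om_i$ can be extended by zero to a test function on the larger domain $\Om$ without altering any of the quantities appearing in the Rayleigh quotient.

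More concretely, fix an index $i\in\{1,\ldots,n\}$. The min-max principle specializes at $k=1$ to
\[
\La_1\big(A^{(\si)}_\eps,\Om_i\big)=\sup_{\phi\in S(\Om_i)}\Big(\si\langle\xi_\eps,\phi^2\rangle-\tfrac12\mc E(\phi)\Big),
\]
and similarly for $\Om$. Since $\Om_i\subset\Om$, every $\phi\in C_0^\infty(\Om_i)$ has compact support in $\Om$, hence (extending by zero outside $\Om_i$) lies in $C_0^\infty(\Om)$. The zero extension preserves $\|\phi\|_2$, $\mc E(\phi)$, and $\langle\xi_\eps,\phi^2\rangle$, since all three are integrals of quantities that vanish wherever $\phi$ vanishes. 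Therefore the supremum over $S(\Om_i)$ is a supremum over a subset of $S(\Om)$, giving $\La_1(A^{(\si)}_\eps,\Om_i)\leq\La_1(A^{(\si)}_\eps,\Om)$. Taking the maximum over $i=1,\ldots,n$ yields the claim.

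The first step is thus to invoke the $k=1$ case of \eqref{Equation: Courant-Fischer} on each $\Om_i$ and on $\Om$; the second step is the zero-extension embedding $S(\Om_i)\hookrightarrow S(\Om)$; the third step is to conclude by monotonicity of the supremum. There is no real obstacle here: the argument is the classical Dirichlet domain-monotonicity, and the only thing one has to verify is that $\xi_\eps$ being a (random) continuous function on $\mbb R^d$ makes $\langle\xi_\eps,\phi^2\rangle$ well defined and unaffected by the zero extension, which is immediate from the definition of $\xi_\eps$ in \eqref{Equation: Approx AH and PAM}. Note also that no disjointness assumption is needed on the $\Om_i$'s since we only bound $\La_1(A^{(\si)}_\eps,\Om)$ from below by each individual $\La_1(A^{(\si)}_\eps,\Om_i)$ before taking the max.
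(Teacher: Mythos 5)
Your proof is correct and is precisely what the paper calls the ``trivial consequence of the min-max principle \eqref{Equation: Courant-Fischer}'': the $k=1$ case reduces to a Rayleigh-quotient supremum over $S(\Om)$, the zero extension gives $S(\Om_i)\subset S(\Om)$, and monotonicity of the supremum finishes it. The paper gives no further detail, so there is nothing to compare beyond noting you have filled in exactly the intended argument.
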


\begin{lemma}\label{Lemma: Localization Upper Bound}
There exists a constant $C>0$
such that for every $\eps,\ka>0$ and $r>\ka$, if we let
$Z:=2\ka\mbb Z_d\cap Q_r$,
then
\begin{align*}
\La_1(A^{(\si)}_\eps,Q_r)
\leq\frac{C}{\ka}+\max_{z\in Z}
\La_1(A^{(\si)}_\eps,z+Q_{\ka+1}).
\end{align*}
\end{lemma}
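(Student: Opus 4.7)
The plan is to prove this by the Ismagilov--Morgan--Sigal--Simon (IMS) localization formula. The geometric setup is clean: since the centers in $Z=2\ka\mbb Z^d\cap Q_r$ are spaced at distance $2\ka$ while each box $z+Q_{\ka+1}$ has half-width $\ka+1$, the family $\{z+Q_{\ka+1}\}_{z\in Z}$ covers $Q_r$ with bounded multiplicity (at most $2^d$ boxes through any point). The first step is to construct a smooth partition of unity $(\chi_z)_{z\in Z}$ with $\chi_z\in C_0^\infty(z+Q_{\ka+1})$, $0\leq\chi_z\leq 1$, $\sum_{z\in Z}\chi_z^2\equiv 1$ on $Q_r$, and a uniform bound
\[\Big\|\sum_{z\in Z}|\nabla\chi_z|_2^2\Big\|_\infty\leq\frac{2C}{\ka}\]
for some absolute constant $C>0$. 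This is a coordinate-wise construction from a fixed one-dimensional cutoff with $\rho^2(x)+\rho^2(1-x)\equiv 1$ on $[0,1]$, appropriately rescaled; I treat its existence as a technical input.

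For the main estimate, fix any $\phi\in S(Q_r)$ and use $\sum_z\chi_z^2\equiv 1$ on $\mr{supp}(\phi)\subset Q_r$ to write $\phi^2=\sum_z(\chi_z\phi)^2$. The potential term decomposes trivially,
\[\si\langle\xi_\eps,\phi^2\rangle=\sum_{z\in Z}\si\langle\xi_\eps,(\chi_z\phi)^2\rangle.\]
A direct expansion of $\sum_z|\nabla(\chi_z\phi)|_2^2$ combined with the cancellation $\sum_z\chi_z\nabla\chi_z=\tfrac12\nabla(\sum_z\chi_z^2)=0$ on $Q_r$ yields the IMS identity
\[\mc E(\phi)=\sum_{z\in Z}\mc E(\chi_z\phi)-\int_{\mbb R^d}\phi(x)^2\sum_{z\in Z}|\nabla\chi_z(x)|_2^2\d x.\]
Set $\alpha_z:=\|\chi_z\phi\|_2^2$, so $\sum_z\alpha_z=1$. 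Whenever $\alpha_z>0$, the normalized test function $\chi_z\phi/\sqrt{\alpha_z}$ belongs to $S(z+Q_{\ka+1})$, so the $k=1$ case of \eqref{Equation: Courant-Fischer} gives
\[\si\langle\xi_\eps,(\chi_z\phi)^2\rangle-\tfrac12\mc E(\chi_z\phi)\leq\alpha_z\La_1(A^{(\si)}_\eps,z+Q_{\ka+1}).\]
Summing over $z$ and combining with the IMS identity and the gradient bound yields
\[\si\langle\xi_\eps,\phi^2\rangle-\tfrac12\mc E(\phi)\leq\sum_{z\in Z}\alpha_z\La_1(A^{(\si)}_\eps,z+Q_{\ka+1})+\frac{C}{\ka}\leq\max_{z\in Z}\La_1(A^{(\si)}_\eps,z+Q_{\ka+1})+\frac{C}{\ka},\]
and taking the supremum over $\phi\in S(Q_r)$ via \eqref{Equation: Courant-Fischer} completes the proof.

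The main obstacle is the construction of the partition of unity meeting the claimed $\ka^{-1}$ gradient bound; everything else is bookkeeping around the variational principle. The technicality lies in arranging the cutoffs so that the total squared gradient at each point is of order $1/\ka$ despite the nominal overlap between adjacent boxes being narrow, and one must check that the implicit constant $C$ can be chosen uniformly in $\ka$ and $r$. Once this partition is in hand, the IMS formula and the min--max inequality close out the argument with no further difficulty.
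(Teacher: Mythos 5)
Your plan---use the IMS localization identity, control the cross term by a pointwise bound on $\sum_{z}|\nabla\chi_z|^2$, and close with the min--max principle---is the natural one, and everything from the IMS identity onward is bookkeeping that you carry out correctly. The difficulty is concentrated exactly where you flag it, but it is not a surmountable technicality: the claimed pointwise bound $\big\|\sum_{z\in Z}|\nabla\chi_z|_2^2\big\|_\infty\leq 2C/\ka$ is provably \emph{false} for the geometry in the statement. The lattice spacing is $2\ka$ while the boxes $z+Q_{\ka+1}$ have half-width $\ka+1$, so two adjacent boxes overlap only in a slab of fixed width $2$, independent of $\ka$. The constraint $\sum_z\chi_z^2\equiv 1$ forces $\chi_z\equiv 1$ on $z+Q_{\ka-1}$ (there no other $\chi_{z'}$ can be nonzero) and $\chi_z\equiv 0$ off $z+Q_{\ka+1}$, so $\chi_z$ must drop from $1$ to $0$ across a corridor of width $2$. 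By the mean value theorem there is a point where $|\nabla\chi_z|\geq 1/2$, hence $\sup_x\sum_z|\nabla\chi_z(x)|^2\geq 1/4$ for every admissible partition of unity, regardless of $\ka$. Your construction therefore cannot exist once $\ka$ is large, and the naive IMS argument only delivers an additive error of order $1$, not $C/\ka$.

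This distinction matters downstream: the error $C/\ka$ is killed by sending $\ka_2\to\infty$ in Section 4.2.5, whereas a constant error would contaminate the final Lyapunov exponent. The paper does not give its own proof (it defers to the proof of $(2.27)$ in Chen's 2014 paper), but the standard way to convert the pointwise failure into the asserted $O(1/\ka)$ bound is a \emph{shift-averaging} argument: for each shift $s\in[0,2\ka)^d$ build the analogous partition $(\chi_z^{(s)})_{z\in s+2\ka\mbb Z^d}$, note that the cross term $\int\phi^2\sum_z|\nabla\chi_z^{(s)}|^2$ averaged over $s$ is $O(1/\ka)\,\|\phi\|_2^2$ (because the overlap slabs occupy a fraction $O(1/\ka)$ of each period cell), and conclude that some shift works; one then absorbs the unknown shift into a max over a fixed lattice at the cost of slightly enlarged boxes. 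That is a genuinely different mechanism from a uniform gradient bound, and it is what the IMS step is missing as you have written it. Alternatively, if one is willing to change the box size to, say, $z+Q_{2\ka}$ with lattice spacing $\ka$, the overlap has width proportional to $\ka$ and your pointwise argument goes through cleanly with error $O(1/\ka^2)$; but that is not the geometry stated in the lemma.
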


The use of localization bounds such as
Lemmas \ref{Lemma: Localization Lower Bound} and \ref{Lemma: Localization Upper Bound}
date back to at least to work of G\"artner, K\"onig, and Molchanov
\cite{GartnerKonig,GartnerKonigMolchanov}.
While Lemma \ref{Lemma: Localization Lower Bound} is a trivial
consequence of the min-max principle \eqref{Equation: Courant-Fischer}, 
Lemma \ref{Lemma: Localization Upper Bound} is more delicate;
we refer to the proof of \cite[(2.27)]{Chen14} for the latter.

\subsection{Semigroup Theory and Feynman-Kac Formula}
\label{Section: Semigroup and Feynman-Kac}

\begin{definition}
For every open set
$\Om\subset\mbb R^d$, we let $T_\Om$
denote the first exit time of $\Om$ by the Brownian motion
$B$, that is, $T_\Om:=\inf\{t\geq 1:B(t)\not\in\Om\}.$

For every $x,y\in\mbb R^d$
and $t>0$, we use $\mbf E^x$ to denote the expectation
with respect to the law of the Brownian motion $\big(B\big|B(0)=x\big)$, and $\mbf E^{x,y}_t$ to denote the expectation
with respect to the law of the Brownian bridge $\big(B\big|B(0)=x\text{ and }B(t)=y\big)$,
both conditional on $\xi_\eps$.

We use $\ms G_t$ to denote the Gaussian kernel, that is,
\begin{align}
\label{Equation: Gaussian Kernel}
\ms G_t(x):=\frac{\mr e^{-|x|_2^2/2t}}{(2\pi t)^{d/2}},\qquad t>0,~x\in\mbb R^d.
\end{align}
\end{definition}

Let $\Om\subset\mbb R^d$ be a bounded open set.
The semigroup of the operator $A^{(\si)}_\eps$ with Dirichlet boundary conditions on $\Om$
is defined as the family of operators
\begin{align}
\label{Equation: Semigroup Def}
\mc T^{A^{(\si)}_\eps,\Om}_tf:=\sum_{k=1}^{\infty}\mr e^{t\La_k(A^{(\si)}_\eps,\Om)}\big\langle\Psi_k(A^{(\si)}_\eps,\Om),f\big\rangle\,\Psi_k(A^{(\si)}_\eps,\Om)
,\qquad t>0
\end{align}
acting on $f\in L^2(\Om)$,
where $\Psi_k(A^{(\si)}_\eps,\Om)$ ($k\in\mbb N$) denote the orthonormal eigenfunctions
associated with $\La_k(A^{(\si)}_\eps,\Om)$.
According to the Feynman-Kac formula (e.g.,  \cite[(34) and Theorem 3.27]{ChungZhao}),
$\mc T^{A^{(\si)}_\eps,\Om}_t$ is an integral operator on
$L^2(\Om)$ with kernel
\begin{align}
\label{Equation: Domain Feynman-Kac}
\mc T^{A^{(\si)}_\eps,\Om}_t(x,y)
:=\ms G_t(x-y)\,\mbf E^{x,y}_t\left[\exp\left(\si\int_0^t \xi_\eps\big(B(s)\big)\d s\right);T_\Om\geq t\right]
\end{align}
for all $t>0$ and $x,y\in\mbb R^d$, where, for any random variable $Y$ and event $E$, we denote
$\mbf E[Y;E]:=\mbf E[\mbf 1_E\,Y]$.
Since $\Om$ is bounded, the semigroup $(\mc T^{A^{(\si)}_\eps,\Om}_t)_{t>0}$
is Hilbert-Schmidt/trace class, and for every $t>0$
(e.g., \cite[Theorem 3.17]{ChungZhao}),
\begin{align}
\label{Equation: Trace Formula}
\sum_{k=1}^{\infty}\mr e^{t\La_k(A^{(\si)}_\eps,\Om)}=\mr{Tr}\left[\mc T^{A^{(\si)}_\eps,\Om}_t\right]
=\int_{\Om}\mc T^{A^{(\si)}_\eps,\Om}_t(x,x)\d x
=\left\|\mc T^{A^{(\si)}_\eps,\Om}_{t/2}\right\|_2^2<\infty.
\end{align}

One of the main ingredients in the proof of Theorems
\ref{Theorem: Main Regular} and \ref{Theorem: Main Singular}
consists of the observation that if $\Om$ is very large and contains the origin,
then we expect that
\[U_\eps(t)=\mbf E^0\left[\exp\left(\int_0^t\xi_\eps\big(B(s)\big)\d s\right)\right]
\approx\mbf E^0\left[\exp\left(\int_0^t\xi_\eps\big(B(s)\big)\d s\right);T_\Om\geq t\right].\]
Thanks to \eqref{Equation: Semigroup Def} and \eqref{Equation: Domain Feynman-Kac},
this then creates a connection between the asymptotics of $U_\eps(t)$ and that of
$\La_1(A_\eps,Q_t)$ as $t\to\infty$, allowing to formalize
the heuristic \eqref{Equation: Informal TM Principle}. In order to make this precise, we use the following
two technical results, which are the statements of \cite[(4.2) and (4.5)]{Chen12}
and \cite[(4.3) and (4.6)]{Chen12}, respectively
(and which are proved using a variety of time-cutoff arguments on
sub-intervals $[0,\eta]\subset[0,t]$ and H\"older's inequality with $p,q\geq1$):

\begin{proposition}
\label{Proposition: Quenched Upper Bound Technical}
Let us denote, for every $\eps,\si>0$ and $t\geq0$,
the quantity
\[U^{(\si)}_\eps(t):=\mbf E^0\left[\exp\left(\si\int_0^t\xi_\eps\big(B(s)\big)\d s\right)\right].\]
Let $r>0$ and $p,q>1$ be such that $1/p+1/q=1$.
For every $t\geq 1$ and $0<\eta<t$,
it holds that
\begin{multline}
\label{Equation: Quenched Upper Bound 1}
\mbf E^0\left[\exp\left(\int_0^t \xi_\eps\big(B(s)\big)\d s\right);T_{Q_r}\geq t\right]
\leq
U^{(q)}_\eps(\eta)^{1/q}\\
\cdot\left(\frac{1}{(2\pi\eta)^{d/2}}\int_{Q_r}\mbf E^x\left[\exp\left(p\int_0^{t-\eta} \xi_\eps\big(B(s)\big)\d s\right);T_{Q_r}\geq t-\eta\right]\d x\right)^{1/p}
\end{multline}
and for every $\tilde t\geq1$ and $\theta>0$,
\begin{align}
\label{Equation: Quenched Upper Bound 2}
\int_{Q_r}\mbf E^x\left[\exp\left(\theta\int_0^{\tilde t} \xi_\eps\big(B(s)\big)\d s\right);T_{Q_r}\geq \tilde t\right]\d x
\leq(2r)^d\mr e^{\tilde t\La_1(A^{(\theta)}_\eps,Q_r)}.
\end{align}
\end{proposition}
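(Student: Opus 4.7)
The plan is to treat the two inequalities separately, since they rely on quite different tools: a Markov/H\"older decomposition for \eqref{Equation: Quenched Upper Bound 1}, and the spectral decomposition of the Feynman--Kac semigroup for \eqref{Equation: Quenched Upper Bound 2}.

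For \eqref{Equation: Quenched Upper Bound 1}, I would begin by splitting the time integral as $\int_0^t=\int_0^\eta+\int_\eta^t$, so that the exponential factors as $\exp(\int_0^\eta\xi_\eps(B(s))\d s)\cdot\exp(\int_\eta^t\xi_\eps(B(s))\d s)$. Dropping part of the exit-time constraint by $\mathbf 1\{T_{Q_r}\geq t\}\leq \mathbf 1\{B(s)\in Q_r\text{ for all }s\in[\eta,t]\}$ and then applying H\"older's inequality with exponents $p,q$ under $\mathbf E^0$ produces two factors. The first is $\mathbf E^0[\exp(q\int_0^\eta\xi_\eps(B(s))\d s)]^{1/q}=U^{(q)}_\eps(\eta)^{1/q}$. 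The second factor is handled using the Markov property of $B$ at time $\eta$: conditionally on $B(\eta)$, the process $\tilde B(s):=B(\eta+s)$ is again a Brownian motion started at $B(\eta)$, so
\begin{align*}
\mathbf E^0\!\left[\exp\!\left(p\int_\eta^t\xi_\eps(B(s))\d s\right)\mathbf 1\{\tilde B\text{ stays in }Q_r\text{ on }[0,t-\eta]\}\right]
&=\int_{\mathbb R^d}\ms G_\eta(x)\,\mbf E^x\!\left[\exp\!\left(p\int_0^{t-\eta}\xi_\eps(B(s))\d s\right);T_{Q_r}\geq t-\eta\right]\d x.
\end{align*}
Since the inner expectation vanishes unless $x\in \overline{Q_r}$, I can restrict the integral to $Q_r$, and then bound $\ms G_\eta(x)\leq\ms G_\eta(0)=(2\pi\eta)^{-d/2}$ uniformly in $x$. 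Combining the two H\"older factors yields \eqref{Equation: Quenched Upper Bound 1}.

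For \eqref{Equation: Quenched Upper Bound 2}, I would use the Feynman--Kac representation \eqref{Equation: Domain Feynman-Kac} for the semigroup of $A^{(\theta)}_\eps$ on $Q_r$. Integrating $y$ out against the Gaussian density gives
\[
\mbf E^x\!\left[\exp\!\left(\theta\int_0^{\tilde t}\xi_\eps(B(s))\d s\right);T_{Q_r}\geq\tilde t\right]=\int_{Q_r}\mc T^{A^{(\theta)}_\eps,Q_r}_{\tilde t}(x,y)\d y,
\]
where the integral is effectively over $Q_r$ since the exit-time constraint forces $B(\tilde t)\in\overline{Q_r}$. Integrating $x$ over $Q_r$ as well turns the left-hand side of \eqref{Equation: Quenched Upper Bound 2} into $\langle\mathbf 1_{Q_r},\mc T^{A^{(\theta)}_\eps,Q_r}_{\tilde t}\mathbf 1_{Q_r}\rangle$. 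By self-adjointness and the spectral decomposition \eqref{Equation: Semigroup Def}, the operator norm of $\mc T^{A^{(\theta)}_\eps,Q_r}_{\tilde t}$ on $L^2(Q_r)$ is $\mr e^{\tilde t\La_1(A^{(\theta)}_\eps,Q_r)}$, so Cauchy--Schwarz gives $\langle\mathbf 1_{Q_r},\mc T_{\tilde t}\mathbf 1_{Q_r}\rangle\leq\mr e^{\tilde t\La_1(A^{(\theta)}_\eps,Q_r)}\|\mathbf 1_{Q_r}\|_2^2=(2r)^d\mr e^{\tilde t\La_1(A^{(\theta)}_\eps,Q_r)}$, which is exactly \eqref{Equation: Quenched Upper Bound 2}.

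Neither step has a genuine obstacle; the only mildly delicate point is keeping the exit-time indicator consistent under the time splitting and the Markov property in the proof of \eqref{Equation: Quenched Upper Bound 1}, since we cannot restore the full event $\{T_{Q_r}\geq t\}$ after H\"older's inequality. Dropping the $[0,\eta]$ portion of the constraint at the right moment (before applying H\"older, rather than after) is the cleanest way around this, and costs nothing in the final bound.
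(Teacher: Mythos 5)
Your proof is correct and matches the approach the paper intends: the paper itself does not reproduce a proof but cites \cite[(4.2)--(4.6)]{Chen12} and describes the method as ``time-cutoff arguments on sub-intervals $[0,\eta]\subset[0,t]$ and H\"older's inequality,'' which is exactly your decomposition for \eqref{Equation: Quenched Upper Bound 1}, and your spectral/Cauchy--Schwarz argument via \eqref{Equation: Semigroup Def}--\eqref{Equation: Domain Feynman-Kac} for \eqref{Equation: Quenched Upper Bound 2} is the standard Feynman--Kac route used there.
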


\begin{proposition}
\label{Proposition: Quenched Lower Bound Technical}
Let $r>0$ and
$p,q>1$ be such that $1/p+1/q=1$. For every $t\geq 1$ and $0<\eta<t$,
it holds that
\begin{multline}
\label{Equation: Quenched Lower Bound 1}
U_\eps(t)\\\geq U^{(-q/p)}_\eps(\eta)^{-p/q}
\left(\int_{Q_r}\ms G_\eta(x)\,
\mbf E^x\left[\exp\left(\frac 1p\int_0^{t-\eta} \xi_\eps\big(B(s)\big)\d s\right);T_{Q_r}\geq t-\eta\right]\d x\right)^p.
\end{multline}
For every $\tilde t>0$, $0<\eta<\tilde t$, and $\theta>0$, it holds that
\begin{multline}
\label{Equation: Quenched Lower Bound 2}
\int_{Q_r}\mbf E^x\left[\exp\left(\theta\int_0^{\tilde t} \xi_\eps\big(B(s)\big)\d s\right);T_{Q_r}\geq \tilde t\right]\d x
\\\geq(2\pi)^{pd/2}\eta^{d/2}\tilde t^{pd/2q}(2r)^{-2p/q}
\mr e^{-\eta(p/q)\La_1(A^{(q\theta/p)}_\eps,Q_r)}\mr e^{p(\tilde t+\eta)\La_1(A^{(\theta/p)}_\eps,Q_r)}.
\end{multline}
\end{proposition}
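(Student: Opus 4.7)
Both inequalities are proved by combining a time-cutoff decomposition of the path integral $\int_0^t\xi_\eps(B(s))\d s$ on a subinterval $[0,\eta]$ with suitable applications of (reverse) H\"older's inequality and the Feynman-Kac representation of Section \ref{Section: Semigroup and Feynman-Kac}. The two bounds are proved separately but share this common skeleton.

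\textbf{Proof of \eqref{Equation: Quenched Lower Bound 1}.} I would first write $\exp(\int_0^t\xi_\eps(B(s))\d s)=\exp(\int_0^\eta\xi_\eps(B(s))\d s)\cdot\exp(\int_\eta^t\xi_\eps(B(s))\d s)$ and apply the reverse H\"older inequality, which for $p,q>1$ with $1/p+1/q=1$ and positive random variables $f,g$ takes the form
\[\mbf E[fg]\geq\mbf E[f^{1/p}]^{p}\,\mbf E[g^{-q/p}]^{-p/q};\]
this is a direct consequence of standard H\"older applied to the identity $f^{1/p}=(fg)^{1/p}\cdot g^{-1/p}$. Plugging in $f=\exp(\int_\eta^t\xi_\eps\d s)$ and $g=\exp(\int_0^\eta\xi_\eps\d s)$ produces the factor $U_\eps^{(-q/p)}(\eta)^{-p/q}$ together with $\mbf E^0[\exp(\tfrac1p\int_\eta^t\xi_\eps\d s)]^p$. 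The strong Markov property at time $\eta$, followed by the restriction of the event $\{B(\eta)\in Q_r\}$ and of the post-$\eta$ trajectory to stay inside $Q_r$ for duration $t-\eta$, converts this last expectation into the displayed integral of $\ms G_\eta$ against the FK expression on $Q_r$.

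\textbf{Proof of \eqref{Equation: Quenched Lower Bound 2}.} I would first rewrite the left-hand side via the Feynman-Kac formula \eqref{Equation: Domain Feynman-Kac} as $\langle 1_{Q_r},\mc T^{A^{(\theta)}_\eps,Q_r}_{\tilde t}1_{Q_r}\rangle$. The idea is then to insert an auxiliary time interval of length $\eta$ via the semigroup/Chapman-Kolmogorov property (which generates the Gaussian kernel prefactors $\ms G_\eta$ and $\ms G_{\tilde t}^{\,p-1}$), and to apply H\"older's inequality to the path integral in order to trade the noise coefficient $\theta$ for the smaller coefficient $\theta/p$. The complementary H\"older exponent contributes a penalty of the form $e^{-\eta(p/q)\La_1(A^{(q\theta/p)}_\eps,Q_r)}$ through an additional FK representation of $\mbf E^x[\exp(\tfrac{q\theta}p\int_0^\eta\xi_\eps\d s)]$. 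Finally, the spectral identity $\|\mc T^{A^{(\theta/p)}_\eps,Q_r}_{\tilde t+\eta}\|_{L^2\to L^2}^{p}=e^{p(\tilde t+\eta)\La_1(A^{(\theta/p)}_\eps,Q_r)}$, applied with a Cauchy-Schwarz bound that uses the test function $1_{Q_r}\in L^2(Q_r)$, yields the desired lower bound; the volume factor $(2r)^{-2p/q}$ emerges from raising $\|1_{Q_r}\|_2^2=(2r)^d$ to a suitable power during this Cauchy-Schwarz step, and $\tilde t^{pd/2q}$ emerges from the Gaussian heat kernel prefactor $\ms G_{\tilde t}(0)^{-(p-1)}$.

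\textbf{Principal obstacle.} The main technical difficulty lies in \eqref{Equation: Quenched Lower Bound 2}: the H\"older exponent choices must be orchestrated so that every inequality points in the correct direction for a \emph{lower} bound, while carefully tracking the Gaussian prefactors $(2\pi)^{pd/2}\eta^{d/2}\tilde t^{pd/2q}$ and the volume factor $(2r)^{-2p/q}$ as they emerge from the interplay of H\"older, the semigroup property, and Cauchy-Schwarz. A secondary but necessary check for \eqref{Equation: Quenched Lower Bound 1} is that $U_\eps^{(-q/p)}(\eta)$ is almost surely finite; this is immediate from the fact that for each fixed $\eps>0$, $\xi_\eps$ has almost surely continuous sample paths, so $\int_0^\eta\xi_\eps(B(s))\d s$ is a.s.\ bounded and all its exponential moments (positive or negative) are a.s.\ finite.
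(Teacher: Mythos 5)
The paper does not actually prove this proposition; it cites it directly from \cite{Chen12} ((4.2)--(4.3) and (4.5)--(4.6)), with only a one-line description of the method ($``$time-cutoff arguments on sub-intervals $[0,\eta]\subset[0,t]$ and H\"older's inequality$"$). Your proof is therefore more ambitious than what the paper records, but your high-level strategy does match that description.

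Your proof of \eqref{Equation: Quenched Lower Bound 1} is correct and complete. The reverse H\"older inequality is verified exactly as you say, the substitution $f=\exp(\int_\eta^t\xi_\eps\d s)$, $g=\exp(\int_0^\eta\xi_\eps\d s)$ yields the factor $U^{(-q/p)}_\eps(\eta)^{-p/q}$, and the Markov property at time $\eta$ (with $B(\eta)$ having density $\ms G_\eta$) followed by restricting to $\{B(\eta)\in Q_r,\ T_{Q_r}\geq t-\eta\}$ gives the displayed integral. No gaps here.

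Your treatment of \eqref{Equation: Quenched Lower Bound 2}, however, has a genuine gap in its final step. You propose to extract the factor $\mr e^{p(\tilde t+\eta)\La_1(A^{(\theta/p)}_\eps,Q_r)}$ via $``$the spectral identity $\|\mc T^{A^{(\theta/p)}_\eps,Q_r}_{\tilde t+\eta}\|_{L^2\to L^2}^p=\mr e^{p(\tilde t+\eta)\La_1}$, applied with a Cauchy--Schwarz bound that uses the test function $\mbf 1_{Q_r}$.$"$ But Cauchy--Schwarz with $\mbf 1_{Q_r}$ gives $\langle\mbf 1_{Q_r},\mc T_{\tilde t'}\mbf 1_{Q_r}\rangle\leq\|\mbf 1_{Q_r}\|_2^2\,\|\mc T_{\tilde t'}\|_{L^2\to L^2}=(2r)^d\mr e^{\tilde t'\La_1}$, i.e.\ an \emph{upper} bound on the FK integral, which points in the wrong direction. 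A lower bound of the form $\langle\mbf 1_{Q_r},\mc T_{\tilde t'}\mbf 1_{Q_r}\rangle\geq\mr e^{\tilde t'\La_1}\langle\Psi_1,\mbf 1_{Q_r}\rangle^2$ is available via the eigenfunction expansion, but then one must control $\langle\Psi_1,\mbf 1_{Q_r}\rangle$ from below, which is nontrivial and not addressed. The standard mechanism (used in \cite{Chen12}) starts instead from the trace bound $\mr e^{\tilde t'\La_1(A^{(\theta/p)}_\eps,Q_r)}\leq\mathrm{Tr}\big[\mc T^{A^{(\theta/p)}_\eps,Q_r}_{\tilde t'}\big]=\int_{Q_r}\mc T^{A^{(\theta/p)}_\eps,Q_r}_{\tilde t'}(x,x)\d x$, writes the diagonal kernel as a Brownian bridge FK expectation on $[0,\tilde t+\eta]$, splits this interval at time $\tilde t$, and applies H\"older with exponents $p,q$ there to produce a factor tied to $\La_1(A^{(\theta)}_\eps,Q_r)$-type quantities on $[0,\tilde t]$ (which feeds into the LHS of \eqref{Equation: Quenched Lower Bound 2} upon raising to the power $1/p$) and one tied to $\La_1(A^{(q\theta/p)}_\eps,Q_r)$ on $[\tilde t,\tilde t+\eta]$ (giving the penalty $\mr e^{-\eta(p/q)\La_1(A^{(q\theta/p)}_\eps,Q_r)}$ after rearranging); the Gaussian bridge densities $\ms G_\eta$, $\ms G_{\tilde t}$ produce the prefactors in the way you correctly anticipate. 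You have the right ingredients, but the $``$Cauchy--Schwarz with $\mbf 1_{Q_r}$$"$ step as stated would fail, and the trace-type argument with the bridge split should replace it.

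Finally, your remark on the a.s.\ finiteness of $U^{(-q/p)}_\eps(\eta)$ is correct and worth keeping.
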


\section{Eigenvalue Asymptotics}
\label{Section: Eigenvalues}

Our purpose in this section is to prove
the eigenvalue asymptotics in Theorems
\ref{Theorem: Main Regular} and \ref{Theorem: Main Singular},
namely,
\eqref{Equation: Main Regular Eigenvalue} and \eqref{Equation: Main Singular Eigenvalue}.
For this purpose, in this section the main result we prove is the following:

\begin{theorem}
\label{Theorem: Eigenvalue Technical}
Let $w:[0,\infty)\to\mbb R^d$ be an arbitrary function.
Let $\theta,\al>0$ and $\be\geq0$ be fixed constants,
and define
\begin{align}
\label{Equation: General Radius}
r(t):=\begin{cases}
\theta t^\al\left(\eps(t)^{-d/2}\sqrt{\log t}\right)^\be&\text{if $\eps(t)$ is in the regular phase,}
\vspace{5pt}\\
\theta t^\al\left((\log t)^{2/(4-d)}\right)^\be&\text{if $\eps(t)$ is in the singular phase.}
\end{cases}
\end{align}
If $\eps(t)$ is in the regular phase, then for every $\si>0$,
\begin{align}
\label{Equation: Regular Eigenvalue}
\lim_{t\to\infty}\frac{\La_1\big(A^{(\si)}_{\eps(t)},w(t)+Q_{r(t)}\big)}{\eps(t)^{-d/2}\sqrt{\log r(t)}}&=\si\sqrt{2dR(0)}&\text{in probability.}
\end{align}
If $\eps(t)$ is in the singular phase, then for every $\si>0$,
\begin{align}
\label{Equation: Singular Eigenvalue}
\lim_{t\to\infty}\frac{\La_1\big(A^{(\si)}_{\eps(t)},w(t)+Q_{r(t)}\big)}{(\log r(t))^{2/(4-d)}}&=
\si^{4/(4-d)}\mf L_d&\text{in probability.}
\end{align}
\end{theorem}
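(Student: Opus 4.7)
The plan is to prove both limits via a two-sided localization argument, handling the two phases in parallel but with different sub-box sizes and test functions. Since $\xi_{\eps(t)}$ is stationary and the Dirichlet Laplacian is translation invariant, the distribution of $\La_1(A^{(\si)}_{\eps(t)},w(t)+Q_{r(t)})$ does not depend on $w(t)$, so I may and do assume $w(t)=0$. The upper bound will come from Lemma \ref{Lemma: Localization Upper Bound} applied with a constant sub-box side length $\kappa+1$: the left-hand side is bounded by the negligible term $C/\kappa$ plus the maximum of $\La_1(A^{(\si)}_{\eps(t)},z+Q_{\kappa+1})$ over roughly $(r(t)/\kappa)^d$ grid points $z$. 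The lower bound will come from Lemma \ref{Lemma: Localization Lower Bound} applied with a single sub-box centered at a carefully chosen location, combined with an explicit test function inserted into the min-max identity \eqref{Equation: Courant-Fischer}. The two phases differ only in (i) the spatial scale of the test function and (ii) how the single-box eigenvalue is controlled from above.

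In the regular phase, where $\eps(t)\gg(\log t)^{-1/(4-d)}$, the Dirichlet form is a lower order correction and the asymptotics are driven by the pointwise supremum of $\xi_{\eps(t)}$. For the upper bound I simply drop $\tfrac12\mc E(\phi)$ in \eqref{Equation: Courant-Fischer} to obtain $\La_1(A^{(\si)}_{\eps(t)},z+Q_{\kappa+1})\leq\si\sup_{x\in z+Q_{\kappa+1}}\xi_{\eps(t)}(x)$. Taking the max over $z$ and using the scaling $\xi_{\eps(t)}(x)=\eps(t)^{-d/2}\xi_1(x/\eps(t))$, this reduces to $\si\eps(t)^{-d/2}\sup_{y\in Q_{r(t)/\eps(t)+O(1)}}\xi_1(y)$, to which Lemma \ref{Lemma: LLN Gaussian Maxima} applies directly, yielding the claimed limit once one notes that the phase assumption forces $\log(r(t)/\eps(t))=\log r(t)(1+o(1))$. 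For the lower bound I use the test function $\phi(x):=\eps(t)^{-d/2}f((x-x^*)/\eps(t))$ with $f\in S(\mbb R^d)$ of small compact support, and pick $x^*$ to maximize the stationary Gaussian field $y\mapsto\int f(z)^2\xi_1(y+z)\d z$ over the rescaled box; Lemma \ref{Lemma: LLN Gaussian Maxima} then gives a lower bound of order $\si\eps(t)^{-d/2}\sqrt{2dV(f)\log r(t)}$, where $V(f):=\int\int f(z)^2R(z-z')f(z')^2\d z\d z'$, and shrinking the support of $f$ forces $V(f)\to R(0)$ by continuity of $R$ at $0$. The Dirichlet cost $\tfrac12\eps(t)^{-2}\mc E(f)$ is $o(\eps(t)^{-d/2}\sqrt{\log r(t)})$ precisely under the regular-phase condition.

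In the singular phase, both terms in \eqref{Equation: Courant-Fischer} contribute at the same order $(\log r(t))^{2/(4-d)}$, and the optimal test function localizes on the intermediate scale $\tau(t):=(\log r(t))^{-1/(4-d)}$, which is much larger than $\eps(t)$ but much smaller than the sub-box. The key identity $\tau(t)^{2-d/2}\sqrt{\log r(t)}=1$ balances the potential and kinetic contributions. For the lower bound I take $\phi(x):=\tau(t)^{-d/2}f((x-x^*)/\tau(t))$ with $f$ a near-optimizer of the variational problem $\sup_{f\in S(\mbb R^d)}[\si\sqrt{2d}\|f\|_4^2-\tfrac12\mc E(f)]$, whose value is $\si^{4/(4-d)}\mf L_d$ by direct maximization against the GNS inequality \eqref{Equation: GNS Intro} with the constant \eqref{Equation: Lyapunov}. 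Since $\eps(t)\ll\tau(t)$, the variance of the stationary Gaussian field $x^*\mapsto\langle\xi_{\eps(t)},\phi^2(\cdot-x^*)\rangle$ converges to $\|\phi^2\|_2^2=\tau(t)^{-d}\|f\|_4^4$ by \eqref{Equation: Continuous Limit of Forms}, and Lemma \ref{Lemma: LLN Gaussian Maxima} supplies an $x^*\in Q_{r(t)}$ at which this inner product is asymptotic to $\tau(t)^{-d/2}\|f\|_4^2\sqrt{2d\log r(t)}$; combined with the scaling identity this yields the lower bound $\si^{4/(4-d)}\mf L_d(\log r(t))^{2/(4-d)}$. For the upper bound I view $\phi\mapsto\langle\xi_{\eps(t)},\phi^2\rangle$ as a Gaussian process on the unit ball of $W(Q_{\kappa+1})$ from \eqref{Equation: W Function Space}, apply a Dudley entropy bound under the pseudo-metric $\|\phi^2-\psi^2\|_{R_{\eps(t)}}$, and optimize over the Dirichlet-form constraint via GNS to recover the same variational constant; a union bound over the $\asymp r(t)^d$ sub-boxes supplies the $\sqrt{\log r(t)}$ factor that combines with the variance scale to match the lower bound.

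The hardest step will be the Dudley-type bound in the singular phase when $d=2,3$: the pseudo-metric $\|\phi^2-\psi^2\|_{R_{\eps(t)}}$ becomes increasingly close to the $L^2$ metric as $\eps(t)\to 0$, and controlling the corresponding entropy integral requires $\eps(t)$ to be not too small relative to the H\"older exponent $h$ of $\bar R$; this technical constraint is exactly what forces the lower bound $(\log t)^{-1/(4-d)-\mf c_d}\ll\eps(t)$ in the definition of the singular phase, with $\mf c_d=h/(d(d+h))$. Once this technical Gaussian-supremum estimate is available, the rest of the proof is a fairly mechanical interpolation: in the regular phase the crude $L^1/L^\infty$ bound suffices and the extreme value of $\xi_{\eps(t)}$ dictates the answer, while in the singular phase the GNS variational problem supplies the sharp constant $\mf L_d$; the transition happens precisely at $\eps(t)\sim(\log t)^{-1/(4-d)}$ because at this scale $\tau(t)^{2-d/2}\sqrt{\log r(t)}=1$, equalizing the potential and kinetic contributions.
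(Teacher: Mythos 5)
Your proposal reproduces the paper's argument essentially step for step: reduction to $w(t)=0$ by stationarity, localization bounds (Lemmas \ref{Lemma: Localization Lower Bound} and \ref{Lemma: Localization Upper Bound}) at scales $\eps(t)$ (regular phase) and $\tau(t)=(\log r(t))^{-1/(4-d)}$ (singular phase), the extreme-value Lemma \ref{Lemma: LLN Gaussian Maxima} for the potential contribution, the GNS variational constant together with Chen's $S\to W$ normalization trick for the singular-phase upper bound, and the Dudley entropy estimate whose $d=2,3$ difficulties explain the constraint $\mf c_d=h/(d(d+h))$. The one detail your sketch elides is the explicit $a(t)$-rescaling that the paper performs before applying the localization upper bound in the singular phase (equation \eqref{Equation: Singular Pre-Upper Bound}), which is what makes the $W(Q_{\ka_2+1})$-indexed Gaussian process have variance $\asymp(\log r(t))^{-1}$ so that Chen's trick can be applied at threshold $1$; this is implicit in your ``variance scale'' remark but should be made explicit.
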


\begin{remark}
Since $\xi_{\eps(t)}$ is translation invariant, if Theorem
\ref{Theorem: Eigenvalue Technical} holds for $w(t)=0$,
then it immediately follows that the same is also true for any other choice of $w(t)$.
We nevertheless state Theorem \ref{Theorem: Eigenvalue Technical} with
general $w(t)$, since this statement is used in the proof of the eigenvalue asymptotics
of $\La_k(A_{\eps(t)},Q_t)$ for $k\geq2$ (see Section \ref{Section: Reduction of Eigenvalue Asymptotics}).
\end{remark}

\begin{remark}
Following-up on the previous remark,
we note that apart from the presence of $w(t)$,
the statement of Theorem \ref{Theorem: Eigenvalue Technical}
has several differences with
\eqref{Equation: Main Regular Eigenvalue} and \eqref{Equation: Main Singular Eigenvalue},
making it simultaneously more and less general than the latter.

Firstly, Theorem \ref{Theorem: Eigenvalue Technical} is less general,
since it only concerns the leading eigenvalue $\La_1$.
This is due to the fact that the localization bounds in Lemmas
\ref{Lemma: Localization Lower Bound} and \ref{Lemma: Localization Upper Bound},
which are the main technical tools with which we prove Theorem \ref{Theorem: Eigenvalue Technical},
only apply to the first eigenvalue.
The fact that
\eqref{Equation: Main Regular Eigenvalue} and \eqref{Equation: Main Singular Eigenvalue}
follow from
\eqref{Equation: Regular Eigenvalue} and \eqref{Equation: Singular Eigenvalue}
comes from one aspect of Theorem \ref{Theorem: Eigenvalue Technical}
that is more general, namely, that we consider the asymptotics of
the leading eigenvalue on off-centered boxes $w(t)+Q_{r(t)}$ with side length
$2r(t)$ instead of $2t$.

Secondly, Theorem \ref{Theorem: Eigenvalue Technical} is more general
in the sense that we consider boxes of side length $r(t)$, as well as the
scaling factor $\si$ (which is equal to $1$ in
\eqref{Equation: Main Regular Eigenvalue} and \eqref{Equation: Main Singular Eigenvalue})
These more general aspects are used in the proof of the
total mass asymptotics in Theorems
\ref{Theorem: Main Regular} and \ref{Theorem: Main Singular};
we refer to Section \ref{Section: Quenched Proof} for the details.
\end{remark}

The remainder of this section is organized as follows.
In Section \ref{Section: Reduction of Eigenvalue Asymptotics},
we use Theorem \ref{Theorem: Eigenvalue Technical} to prove
\eqref{Equation: Main Regular Eigenvalue} and \eqref{Equation: Main Singular Eigenvalue}.
In Section \ref{Section: Proof of Eigenvalue Technical}, we prove
Theorem \ref{Theorem: Eigenvalue Technical}.

\subsection{Proof of
\eqref{Equation: Main Regular Eigenvalue} and \eqref{Equation: Main Singular Eigenvalue}}
\label{Section: Reduction of Eigenvalue Asymptotics}

We only prove the eigenvalue asymptotics
in the regular phase, as the proof in the singular
case follows from the same argument.
We begin with the following statement
(see, e.g., \cite[Theorem 8.5]{ChoukVZ}),
which connects the non-leading eigenvalues to $\La_1$:

\begin{lemma}
\label{Lemma: Reduction Lower Bound}
Let $k\in\mbb N$ and $t\geq 1$ be fixed.
If $z_1,\ldots,z_k\in\mbb R^d$ and $\ka>0$ are such that
$z_i+Q_\ka\subset Q_t$ for every $1\leq i\leq k$,
and
\[\langle\mbf 1_{z_i+Q_\ka},\mbf 1_{z_j+Q_\ka}\rangle=0\]
for every $1\leq i<j\leq k$, then for every $\eps>0$
one has
\[\La_k(A_\eps,Q_t)\geq\min_{1\leq i\leq k}\La_1(A_\eps;z_i+Q_\ka).\]
\end{lemma}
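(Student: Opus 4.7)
The plan is a direct application of the Courant-Fischer characterization \eqref{Equation: Courant-Fischer} with trial functions built from the leading eigenfunctions on the individual sub-boxes. For each $i=1,\ldots,k$, let $\psi_i$ be the $L^2$-normalized leading Dirichlet eigenfunction of $A_\eps$ on $z_i+Q_\ka$, extended by zero to $Q_t$. The hypothesis $\langle\mbf 1_{z_i+Q_\ka},\mbf 1_{z_j+Q_\ka}\rangle=0$ forces the boxes $z_i+Q_\ka$ and $z_j+Q_\ka$ to have disjoint interiors (up to a Lebesgue-null set) whenever $i\neq j$, so the supports of $\psi_i$ and $\psi_j$ are essentially disjoint and in particular $\langle \psi_i,\psi_j\rangle=0$.

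The main computation is then a one-liner: for any $\phi=\sum_{i=1}^k c_i\psi_i$ with $\|\phi\|_2=1$ (equivalently $\sum_i c_i^2=1$), disjoint supports eliminate all cross terms, yielding
\begin{align*}
\langle \xi_\eps,\phi^2\rangle-\tfrac12\mc E(\phi)
=\sum_{i=1}^k c_i^2\big(\langle \xi_\eps,\psi_i^2\rangle-\tfrac12\mc E(\psi_i)\big)
=\sum_{i=1}^k c_i^2\,\La_1(A_\eps,z_i+Q_\ka),
\end{align*}
where the second equality uses that each $\psi_i$ realizes $\La_1(A_\eps,z_i+Q_\ka)$. Since the right-hand side is a convex combination, it is bounded below by $\min_{1\leq i\leq k}\La_1(A_\eps,z_i+Q_\ka)$. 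Substituting $\psi_1,\ldots,\psi_k$ as the orthogonal family in \eqref{Equation: Courant-Fischer} delivers the claimed lower bound on $\La_k(A_\eps,Q_t)$.

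The only technical obstacle is that \eqref{Equation: Courant-Fischer} is formulated with trial functions in $C_0^\infty(Q_t)$, whereas the eigenfunctions $\psi_i$ a priori live only in $H_0^1(z_i+Q_\ka)$. This is handled by exploiting that $C_0^\infty(z_i+Q_\ka)$ is a core for the quadratic form of $A_\eps$ on $z_i+Q_\ka$: one approximates each $\psi_i$ by a sequence $\psi_i^{(n)}\in C_0^\infty(z_i+Q_\ka)$ converging in the form norm, applies \eqref{Equation: Courant-Fischer} to the (still orthogonal, since supports remain in disjoint boxes) family $\psi_1^{(n)},\ldots,\psi_k^{(n)}$, and passes to the limit $n\to\infty$ so that $\langle\xi_\eps,(\psi_i^{(n)})^2\rangle-\tfrac12\mc E(\psi_i^{(n)})\to\La_1(A_\eps,z_i+Q_\ka)$. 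Taking the minimum over $i$ of these limits then yields the desired inequality, with no further difficulty.
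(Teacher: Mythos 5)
Your proof is correct, and since the paper itself does not prove Lemma \ref{Lemma: Reduction Lower Bound} but only cites \cite[Theorem~8.5]{ChoukVZ}, your argument supplies exactly the standard min--max reasoning that such a citation implicitly appeals to. The key steps are all sound: the hypothesis $\langle\mbf 1_{z_i+Q_\ka},\mbf 1_{z_j+Q_\ka}\rangle=0$ forces the open boxes to have disjoint interiors, so for $\phi=\sum_i c_i\psi_i$ the identities $\phi^2=\sum_i c_i^2\psi_i^2$ and $|\nabla\phi|^2=\sum_i c_i^2|\nabla\psi_i|^2$ hold pointwise, giving the decoupling $\langle\xi_\eps,\phi^2\rangle-\tfrac12\mc E(\phi)=\sum_i c_i^2\La_1(A_\eps,z_i+Q_\ka)$, whose minimum over the simplex is $\min_i\La_1(A_\eps,z_i+Q_\ka)$; plugging the family into \eqref{Equation: Courant-Fischer} then yields the claim.

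One small imprecision worth flagging in the approximation step: after replacing $\psi_i$ by $\psi_i^{(n)}\in C_0^\infty(z_i+Q_\ka)$, the quantity $\langle\xi_\eps,(\psi_i^{(n)})^2\rangle-\tfrac12\mc E(\psi_i^{(n)})$ by itself does not converge to $\La_1(A_\eps,z_i+Q_\ka)$ unless you also keep track of $\|\psi_i^{(n)}\|_2\to1$. The clean way to phrase it is that the infimum in \eqref{Equation: Courant-Fischer} over $\phi\in\mr{span}(\psi_1^{(n)},\ldots,\psi_k^{(n)})$ with $\|\phi\|_2=1$ is bounded below by $\min_i\big(\langle\xi_\eps,(\psi_i^{(n)})^2\rangle-\tfrac12\mc E(\psi_i^{(n)})\big)/\|\psi_i^{(n)}\|_2^2$, each term of which tends to $\La_1(A_\eps,z_i+Q_\ka)$ since $H^1_0$ convergence on a bounded box with continuous $\xi_\eps$ implies convergence of both $\mc E(\cdot)$ and $\langle\xi_\eps,(\cdot)^2\rangle$. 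With that bookkeeping made explicit, the argument is complete.
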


We also make the following simple remark:

\begin{remark}
\label{Remark: General Scaling Function}
$r(t)$ in \eqref{Equation: General Radius}
satisfies the following as $t\to\infty$:
\begin{align}
\label{Equation: log r(t) regular}
\eps(t)^{-d/2}\sqrt{\log r(t)}&=\sqrt{\al}\,\eps(t)^{-d/2}\sqrt{\log t}\,\big(1+o(1)\big)&\text{regular phase},\\
\label{Equation: log r(t) singular}
\big(\log r(t)\big)^{2/(4-d)}&=\al^{2/(4-d)}(\log t)^{2/(4-d)}\,\big(1+o(1)\big)&\text{singular phase}.
\end{align}
\end{remark}

Thanks to Theorem \ref{Theorem: Eigenvalue Technical}
(with $\al=1$, $\be=0$, and $\si=1$)
and Remark \ref{Remark: General Scaling Function},
every sequence of $t>0$ such that $t\to\infty$ has a subsequence $(t_n)_{n\in\mbb N}$
along which
\[\lim_{n\to\infty}\frac{{\La_1\big(A_{\eps(t_n)};w(t_n)+Q_{\theta t_n}\big)}}
{\eps(t_n)^{-d/2}\sqrt{\log t_n}}=\sqrt{2dR(0)}\qquad\text{almost surely}\]
for every choice of $\theta\in(0,1]\cap\mbb Q$ and $w(t_n)\in\mbb Q$.

On the one hand, by taking $w(t_n)=0$
and $\theta=1$
and using the trivial inequality $\La_k(A_\eps,\Om)\leq\La_1(A_\eps,\Om)$
for all $k\in\mbb N$ and $\Om\subset\mbb R^d$, we get
\begin{align}
\label{Equation: Higher Eigenvalues Upper Bound}
\limsup_{n\to\infty}\frac{{\La_k(A_{\eps(t_n)},Q_{t_n})}}
{\eps(t_n)^{-d/2}\sqrt{\log t_n}}\leq\sqrt{2dR(0)}\qquad\text{almost surely}.
\end{align}
On the other hand, we can find a small enough constant $\theta\in(0,1]\cap\mbb Q$ (that only depends on $k$)
such that for every $t\geq 0$, there exists $w^1(t),\ldots,w^k(t)\in\mbb Q$ such that
the sets $w^i(t)+Q_{\theta t}$ are mutually disjoint and inside $Q_t$.
Consequently, up to taking a further subsequence of $(t_n)_{n\in\mbb N}$
in \eqref{Equation: Higher Eigenvalues Upper Bound},
we obtain from Theorem \ref{Theorem: Eigenvalue Technical},
Remark \ref{Remark: General Scaling Function},
and Lemma \ref{Lemma: Reduction Lower Bound} that
\[\liminf_{n\to\infty}\frac{\La_k(A_{\eps(t_n)},Q_{t_n})}{\eps(t_n)^{-d/2}\sqrt{\log t_n}}
\geq\min_{1\leq i\leq k}\liminf_{n\to\infty}\frac{\La_1\big(A_{\eps(t_n)};w^i(t_n)+Q_{\theta t_n}\big)}{\eps(t_n)^{-d/2}\sqrt{\log t_n}}=\sqrt{2dR(0)}\]
almost surely.
Combined with \eqref{Equation: Higher Eigenvalues Upper Bound},
this completes the proof of the asymptotic
for $\La_k(A_{\eps(t)},Q_t)$
in Theorem \ref{Theorem: Main Regular}.

\subsection{Proof of Theorem \ref{Theorem: Eigenvalue Technical}}
\label{Section: Proof of Eigenvalue Technical}

We now prove Theorem \ref{Theorem: Eigenvalue Technical}.
Given that $\xi_\eps$ is stationary, there is no
loss of generality in assuming that $w(t)=0$; hence we
need only prove asymptotics for $\La_1(A^{(\si)}_{\eps(t)},Q_{r(t)})$.
We separate the proof of Theorem \ref{Theorem: Eigenvalue Technical} into
four steps, namely, matching lower and upper bounds for
\eqref{Equation: Regular Eigenvalue} and \eqref{Equation: Singular Eigenvalue}:

\subsubsection{Step 1. Lower Bound for \eqref{Equation: Regular Eigenvalue}}
\label{Section: Eigenvalues Regular Phase Lower}

Let $\eps(t)$ be in the regular phase.
Let $\ka>0$ be large enough so that $R$
is supported on $Q_{\ka/2}$.
For large $t>0$, let us define
\begin{align}\label{Equation: Regular Lower Bound Z}
Z_t:=3\ka\eps(t)\mbb Z^d\cap Q_{r(t)-\ka\eps(t)}.
\end{align}
By Lemma \ref{Lemma: Localization Lower Bound}
(with the sets $\Om_i$ given by $z+Q_{\ka\eps(t)}$ for all $z\in Z_t$), we have
\begin{align}\label{Equation: Regular Localization Lower Bound 1}
\nonumber
\La_1(A^{(\si)}_{\eps(t)},Q_{r(t)})&\geq
\max_{z\in Z_t}\sup_{\phi\in S(z+Q_{\ka\eps(t)})}
\big(\si\big\langle \xi_{\eps(t)},\phi^2\big\rangle-\tfrac12 \mc E(\phi)\big)\\
&=\sup_{\phi\in S(Q_{\ka\eps(t)})}
\left(\max_{z\in Z_t}\si\big\langle \xi_{\eps(t)},(\tau_z\phi)^2\big\rangle-\tfrac12 \mc E(\phi)\right),
\end{align}
where we recall that $\tau_z$ is the translation operator defined in
\eqref{Equation: Translation Operator},
and we used in \eqref{Equation: Regular Localization Lower Bound 1}
the fact that $ \mc E$ is translation invariant.
The set of functions $\phi$ over which the supremum
\eqref{Equation: Regular Localization Lower Bound 1} is taken depends on $t$.
When considering the large-$t$ limit of this expression, however,
it is more convenient
to consider $\phi$ from a fixed function space. For this reason, we consider
rescaled functions:
By Remark \ref{Remark: L2 Rescaling} (with $\eta=\eps(t)^{-1}$),
we find that for every $\phi\in S(Q_\ka)$, one has
\begin{align}\label{Equation: Regular Localization Lower Bound 2}
\La_1(A^{(\si)}_{\eps(t)},Q_{r(t)})\geq
\left(\max_{z\in Z_t}\si\big\langle \xi_{\eps(t)},(\tau_z\phi^{(1/\eps(t))})^2\big\rangle\right)
-\eps(t)^{-2}\tfrac12 \mc E(\phi).
\end{align}
Until further notice, we assume that we are considering a single fixed
function $\phi\in S(Q_\ka)$.
By a straightforward change of variables,
\begin{align}
\label{Equation: Regular Localization Lower Bound 3}
\int_{\mbb R^d}\xi_{\eps(t)}(x)\eps(t)^{-d}\phi\left(\frac{x-z}{\eps(t)}\right)^2\d x=
\eps(t)^{-d/2}\int_{\mbb R^d}\xi_1(x)\phi\left(x-\frac z{\eps(t)}\right)^2\d x,
\end{align}
and thus \eqref{Equation: Regular Localization Lower Bound 2} yields
\begin{align}
\label{Equation: Regular Localization Lower Bound 4}
\La_1(A^{(\si)}_{\eps(t)},Q_{r(t)})\geq
\left(\si\eps(t)^{-d/2}\max_{z\in Z_t}\big\langle \xi_1,(\tau_{z/\eps(t)}\phi)^2\big\rangle\right)
-\eps(t)^{-2}\tfrac12 \mc E(\phi).
\end{align}
On the one hand,
since $\eps(t)^{-(4-d)/2}\ll\sqrt{\log t}$ in the regular phase,
it follows from \eqref{Equation: log r(t) regular} that
\begin{align}
\label{Equation: Regular Localization Lower Bound 5}
\frac{\eps(t)^{-2}}{\eps(t)^{-d/2}\sqrt{\log r(t)}} \mc E(\phi)=O\left(\eps(t)^{-(4-d)/2}/\sqrt{\log t}\right)=o(1)\qquad\text{as }t\to\infty.
\end{align}
On the other hand, we note that
\begin{align}
\label{Equation: Regular Localization Lower Bound 6}
\big\langle \xi_1,(\tau_{z/\eps(t)}\phi)^2\big\rangle,\qquad z\in Z_t
\end{align}
is a Gaussian process with mean zero and covariance
\[\big\langle(\tau_{z/\eps(t)}\phi)^2,(\tau_{z'/\eps(t)}\phi)^2\big\rangle_R,\qquad z,z'\in Z_t.\]
By definition of $Z_t$ and our assumption that $\eps(t)\leq1$, if $z,z'\in Z_t$ are distinct, then the supports of
$(\tau_{z/\eps(t)}\phi)^2$ and $(\tau_{z'/\eps(t)}\phi)^2$ are separated
by at least $\ka$ in $\infty$-norm. Therefore, since we have assumed
$\ka$ to be large enough so that $R$ (and therefore $R_{\eps(t)}$ for
all $t\geq0$) is supported in $Q_{\ka/2}$, we conclude that
\eqref{Equation: Regular Localization Lower Bound 6} are i.i.d.
Gaussians with variance $\si^2\|\phi^2\|_R^2$.
In particular, given that by definition of $Z_t$ in \eqref{Equation: Regular Lower Bound Z}
and the fact that $\eps(t)$ is in the regular phase, we have
\[\log|Z_t|=\big(d\log r(t)\big)\big(1+o(1)\big),\]
then it follows from  Lemma \ref{Lemma: LLN Gaussian Maxima}
(by coupling the $(X_t(z))_{z\in Z_t}$ with a collection of i.i.d.
Gaussians with mean zero and variance $\si^2\|\phi^2\|_R^2$ on $\mbb Z$) that
\begin{align}
\label{Equation: Regular Localization Lower Bound 7}
\lim_{t\to\infty}\frac{1}{\sqrt{\log r(t)}}
\max_{z\in Z_t}\big\langle \xi_1,(\tau_{z/\eps(t)}\phi)^2\big\rangle=\si\|\phi^2\|_R\sqrt{2d}
\qquad\text{in probability}.
\end{align}

By combining the limits \eqref{Equation: Regular Localization Lower Bound 5}
and \eqref{Equation: Regular Localization Lower Bound 7}
with the lower bound \eqref{Equation: Regular Localization Lower Bound 4},
our argument so far can be summarized as follows:
For every $\ka>0$ and $\phi\in S(Q_\ka)$,
every sequence of $t>0$ such that $t\to\infty$ has a subsequence $(t_n)_{n\in\mbb N}$
along which
\[\liminf_{n\to\infty}\frac{\La_1(A^{(\si)}_{\eps(t_n)},Q_{r(t_n)})}{\eps(t_n)^{-d/2}\sqrt{\log r(t_n)}}\geq\si\|\phi^2\|_R\sqrt{2d}
\qquad\text{almost surely}.\]
Since $\phi^2\in L_1(Q_\ka)$, we can take a sequence $(\phi_n)_{n\in\mbb N}\subset S(Q_\ka)$ such that
$\phi_n^2\to\de_0$ as $n\to\infty$;
hence $\|\phi_n^2\|^2_R\to R(0)$.
Up to taking further subsequences of $t_n$, this
concludes the proof of the lower bound for \eqref{Equation: Regular Eigenvalue}.

\subsubsection{A Remark on $d\geq4$, Part 1}
\label{Section: Remark on d4 1}

In the proof of the lower bound for
\eqref{Equation: Regular Eigenvalue}
that we have just provided,
the only manifestation of the assumption that $d\leq3$
comes from the requirement that
$\eps(t)^{(d-4)/2}\ll\sqrt{\log r(t)}$, which is equivalent to
the assumption that $\eps(t)$ is in the regular phase.
If $d\geq4$, then
\begin{enumerate}
\item $\log|Z_t|=d\log\big(r(t)/\eps(t)\big)\big(1+o(1)\big)$ with $Z_t$
as in \eqref{Equation: Regular Lower Bound Z}, and
\item $\eps(t)^{(d-4)/2}\ll\sqrt{\log\big(r(t)/\eps(t)\big)}$
no matter how quickly $\eps(t)$ vanishes.
\end{enumerate}
Therefore, we get the following by using the same arguments
as in Section \ref{Section: Eigenvalues Regular Phase Lower}:

\begin{proposition}
\label{Proposition: d4 Lower Bound}
Let $d\geq4$ and $\eps(t)\in(0,1]$ be arbitrary.
Every sequence of $t>0$ such that $t\to\infty$ has a subsequence $(t_n)_{n\in\mbb N}$
along which
\[\liminf_{n\to\infty}\frac{\La_1\big(A^{(\si)}_{\eps(t_n)},w(t_n)+Q_{r(t_n)}\big)}{\eps(t_n)^{-d/2}\sqrt{\log\big(r(t_n)/\eps(t_n)\big)}}\geq
\si\sqrt{2dR(0)}\qquad\text{almost surely}.\]
\end{proposition}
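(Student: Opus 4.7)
I would follow the argument of Section \ref{Section: Eigenvalues Regular Phase Lower} essentially verbatim, making only the quantitative adjustments necessitated by the fact that $\eps(t)\in(0,1]$ is now arbitrary. First, by stationarity of $\xi_{\eps(t)}$ reduce to $w(t)=0$. Fix $\ka>0$ large enough that $\bar R$ is supported in $Q_{\ka/2}$, set $Z_t:=3\ka\eps(t)\mbb Z^d\cap Q_{r(t)-\ka\eps(t)}$, and apply Lemma \ref{Lemma: Localization Lower Bound} with the sub-boxes $z+Q_{\ka\eps(t)}$, $z\in Z_t$. Rescaling via $\phi\mapsto\phi^{(1/\eps(t))}$ exactly as in \eqref{Equation: Regular Localization Lower Bound 2}--\eqref{Equation: Regular Localization Lower Bound 4} produces, for any fixed $\phi\in S(Q_\ka)$,
\[
\La_1\bigl(A^{(\si)}_{\eps(t)},Q_{r(t)}\bigr)\;\geq\;\si\,\eps(t)^{-d/2}\max_{z\in Z_t}\langle\xi_1,(\tau_{z/\eps(t)}\phi)^2\rangle-\eps(t)^{-2}\tfrac12\mc E(\phi).
\]

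Two quantitative items now differ from Section \ref{Section: Eigenvalues Regular Phase Lower}. First, the cardinality is $\log|Z_t|=d\log\bigl(r(t)/\eps(t)\bigr)\bigl(1+o(1)\bigr)$ rather than $d\log r(t)\bigl(1+o(1)\bigr)$, which is what produces the denominator $\sqrt{\log(r(t)/\eps(t))}$ in the target statement. Second, the ratio of the kinetic cost to this denominator is
\[
\frac{\eps(t)^{-2}\mc E(\phi)}{\eps(t)^{-d/2}\sqrt{\log(r(t)/\eps(t))}}\;=\;\frac{\eps(t)^{(d-4)/2}}{\sqrt{\log(r(t)/\eps(t))}}\,\mc E(\phi),
\]
and this tends to $0$: because $d\geq 4$ and $\eps(t)\in(0,1]$, one has $\eps(t)^{(d-4)/2}\leq 1$, while $\al>0$ and $\eps(t)\leq 1$ force $r(t)/\eps(t)\to\infty$. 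This is the \emph{only} place where the hypothesis $d\geq 4$ intervenes, and it is precisely the observation that makes the phase transition disappear in high dimensions.

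The random term is handled identically to the regular case. The spacing $3\ka\eps(t)$ on $Z_t$ together with $\bar R_{\eps(t)}$ being supported in $Q_{\ka\eps(t)/2}$ makes $\{\langle\xi_1,(\tau_{z/\eps(t)}\phi)^2\rangle\}_{z\in Z_t}$ a family of i.i.d.\ centered Gaussians with variance $\|\phi^2\|_R^2$; coupling to an i.i.d.\ Gaussian sequence and invoking Lemma \ref{Lemma: LLN Gaussian Maxima} yields
\[
\lim_{t\to\infty}\frac{1}{\sqrt{\log(r(t)/\eps(t))}}\max_{z\in Z_t}\langle\xi_1,(\tau_{z/\eps(t)}\phi)^2\rangle=\|\phi^2\|_R\sqrt{2d}\quad\text{in probability}.
\]
Passing to an almost surely convergent subsequence, then choosing $\phi_n\in S(Q_\ka)$ with $\phi_n^2\to\de_0$ so that $\|\phi_n^2\|_R^2\to R(0)$, and diagonalizing, concludes. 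The main (and essentially only) obstacle is confirming the kinetic-cost bound above; everything else is a direct transcription of the regular-phase proof, and no renormalization is required since the comparison is to i.i.d.\ copies of the same mean-zero Gaussian functional regardless of how singular $\xi_{\eps(t)}$ becomes.
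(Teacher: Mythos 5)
Your proposal is correct and follows essentially the same route as the paper, which proves Proposition \ref{Proposition: d4 Lower Bound} precisely by re-running the regular-phase argument of Section \ref{Section: Eigenvalues Regular Phase Lower} with the two modifications you identify: $\log|Z_t|=d\log\bigl(r(t)/\eps(t)\bigr)\bigl(1+o(1)\bigr)$, and $\eps(t)^{(d-4)/2}\ll\sqrt{\log\bigl(r(t)/\eps(t)\bigr)}$ for all choices of $\eps(t)\in(0,1]$ when $d\geq4$. (One minor slip: the independence of the shifted Gaussian functionals uses that $R$, not $\bar R$, is supported in $Q_{\ka/2}$ --- equivalently $R_{\eps(t)}$ in $Q_{\ka\eps(t)/2}$ --- but the spacing argument you give is otherwise the right one.)
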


\subsubsection{Step 2. Upper Bound for \eqref{Equation: Regular Eigenvalue}}
\label{Section: Eigenvalues Regular Phase Upper Bound}

Let $\eps(t)$ be in the regular phase.
Since $ \mc E$ is nonnegative and $\phi\in S(Q_{r(t)})$
are such that $\int_{\mbb R^d} \phi(x)^2\d x=1$, we have that
\[\La_1(A^{(\si)}_{\eps(t)},Q_{r(t)})\leq\si\sup_{|x|_\infty\leq r(t)/\eps(t)}\eps(t)^{-d/2}\xi_1(x).\]
We then get the desired bound by a direct application of
Lemma \ref{Lemma: LLN Gaussian Maxima}.

\subsubsection{A Remark on $d\geq4$, Part 2}
\label{Section: Remark on d4 2}

Carrying on from Section \ref{Section: Remark on d4 1},
the simple argument
in Section \ref{Section: Eigenvalues Regular Phase Upper Bound}
does not depend on the assumption that $d\geq3$. The only difference
is that if $d\geq4$ and we do not assume a lower bound on the vanishing
rate of $\eps(t)$, then $\log\big(r(t)/\eps(t)\big)$ need not be asymptotically
equivalent to $\log r(t)$. Consequently, by combining
Proposition \ref{Proposition: d4 Lower Bound} with the argument
presented in Section \ref{Section: Reduction of Eigenvalue Asymptotics},
we obtain the following result, which states that no phase transition
occurs in the eigenvalue asymptotics when $d\geq4$:

\begin{theorem}
\label{Theorem: d4}
Let $d\geq4$, and let $\eps(t)\in(0,1]$ be arbitrary.
For every $k\in\mbb N$,
\[\lim_{t\to\infty}\frac{\La_k(A_{\eps(t)},Q_t)}{\eps(t)^{-d/2}\sqrt{\log\big(t/\eps(t)\big)}}=\sqrt{2dR(0)}
\qquad\text{in probability}.\]
\end{theorem}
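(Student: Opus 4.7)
The plan is to combine Proposition \ref{Proposition: d4 Lower Bound} with the reduction argument of Section \ref{Section: Reduction of Eigenvalue Asymptotics} and the elementary upper bound from Section \ref{Section: Eigenvalues Regular Phase Upper Bound}. The key observation is that neither of these steps uses the hypothesis $d\leq 3$ in an essential way: the only place where the regular-phase assumption entered was in controlling the Dirichlet form contribution against the size of the Gaussian maximum, and this is precisely what Proposition \ref{Proposition: d4 Lower Bound} already handles in arbitrary dimension by replacing $\sqrt{\log r(t)}$ with $\sqrt{\log(r(t)/\eps(t))}$.

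For the upper bound, I would note that $\La_k(A_{\eps(t)},Q_t)\leq\La_1(A_{\eps(t)},Q_t)$ for every $k\in\mbb N$, and then repeat verbatim the argument of Section \ref{Section: Eigenvalues Regular Phase Upper Bound} with $\si=1$ and $r(t)=t$: dropping the Dirichlet form and applying H\"older yields
\[
\La_1(A_{\eps(t)},Q_t)\leq\eps(t)^{-d/2}\sup_{|x|_\infty\leq t/\eps(t)}\xi_1(x).
\]
Since $\eps(t)\leq 1$ implies $t/\eps(t)\to\infty$, Lemma \ref{Lemma: LLN Gaussian Maxima} gives that the right-hand side, divided by $\eps(t)^{-d/2}\sqrt{\log(t/\eps(t))}$, converges almost surely to $\sqrt{2dR(0)}$. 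This settles the upper bound for every $k$.

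For the lower bound, I would mimic Section \ref{Section: Reduction of Eigenvalue Asymptotics}: fix $k\in\mbb N$ and choose a small rational $\theta\in(0,1]$ (depending only on $k$) together with rational displacement vectors $w^1(t),\dots,w^k(t)\in\mbb Q^d$ such that the translated boxes $w^i(t)+Q_{\theta t}$ are pairwise disjoint (hence $L^2$-orthogonal as indicators) and contained in $Q_t$. Lemma \ref{Lemma: Reduction Lower Bound} then gives
\[
\La_k(A_{\eps(t)},Q_t)\geq\min_{1\leq i\leq k}\La_1\bigl(A_{\eps(t)},w^i(t)+Q_{\theta t}\bigr).
\]
Applying Proposition \ref{Proposition: d4 Lower Bound} with $\al=1$, $\be=0$, $\si=1$, and the radius function $\theta t$ in place of $r(t)$, every sequence $t\to\infty$ admits a subsequence along which each of the $k$ leading eigenvalues, divided by $\eps(t_n)^{-d/2}\sqrt{\log(\theta t_n/\eps(t_n))}$, converges a.s.\ to $\sqrt{2dR(0)}$. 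Since $\log(\theta t/\eps(t))=\log(t/\eps(t))+\log\theta$ and $\log(t/\eps(t))\to\infty$ as $t\to\infty$ (by $\eps(t)\leq 1$), the two normalizations are asymptotically equivalent, and the matching lower bound follows. Converting back from subsequential a.s.\ convergence to convergence in probability is standard.

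The main conceptual point—already made by the author in the remarks preceding the theorem—is that when $d\geq 4$, the relative scaling $\eps(t)^{(d-4)/2}\ll\sqrt{\log(r(t)/\eps(t))}$ holds automatically, so the Dirichlet form penalty never catches up to the maximum of $\xi_{\eps(t)}$ regardless of how fast $\eps(t)\to 0$; there is no mechanism for a phase transition. The only genuinely delicate point in writing the proof is tracking the asymptotic equivalence of $\log(t/\eps(t))$ and $\log(\theta t/\eps(t))$ uniformly in $\eps(t)$, which is routine because $\log\theta$ is a bounded additive correction to a quantity that diverges. I do not expect any serious obstacle beyond this bookkeeping.
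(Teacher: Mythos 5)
Your proposal is correct and follows essentially the same route as the paper's own (very brief) argument: it combines Proposition \ref{Proposition: d4 Lower Bound} for the lower bound, the elementary sup-bound of Section \ref{Section: Eigenvalues Regular Phase Upper Bound} (which indeed never uses $d\leq 3$) with normalization $\sqrt{\log(t/\eps(t))}$ for the upper bound, and the disjoint-sub-box reduction of Section \ref{Section: Reduction of Eigenvalue Asymptotics} via Lemma \ref{Lemma: Reduction Lower Bound} to pass from $\La_1$ to $\La_k$. Your bookkeeping on the asymptotic equivalence of $\log(\theta t/\eps(t))$ and $\log(t/\eps(t))$ is exactly the point the paper flags, and is handled correctly.
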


\subsubsection{Lower Bound for \eqref{Equation: Singular Eigenvalue}}

Let $\eps(t)$ be in the singular phase.
Let $\ka>0$ be large enough so that $R$ is supported in
$Q_{\ka/2}$. Following the heuristic in \eqref{Equation: st 3},
we expect that the leading eigenfunction of $A^{(\si)}_{\eps(t)}$
on $Q_{r(t)}$ should localize in a sub-box of size (up to a constant)
$\big(\log r(t)\big)^{-1/(4-d)}$.
Thus, in order to set up the localization lower bound in this regime, we introduce
the following lattice:
Fix some $\ka_1,\ka_2>0$, and for every $t>0$, let
\begin{align}
\label{Equation: Singular Microbox}
a(t):=\big(\ka_1\log r(t)\big)^{-1/(4-d)}
\quad\text{and}\quad
Z_t:=(2\ka_2+\ka)a(t)\mbb Z^d\cap Q_{r(t)-\ka_2a(t)}.
\end{align}

\begin{remark}
\label{Remark: Z_t lower bound size}
It is easy to see that there exists some $c>0$
such that $|Z_t|\geq cr(t)^d$ for all large enough $t$.
\end{remark}

By applying Lemma \ref{Lemma: Localization Lower Bound}
(with $\Om_i$ given by $z+Q_{\ka_2a(t)}$ for $z\in Z_t$)
and a rescaling similar to \eqref{Equation: Regular Localization Lower Bound 2}
(with $a(t)$ instead of $\eps(t)$), we have the lower bound
\begin{align}
\label{Equation: Singular Eigenvalue Lower Bound}
\La_1(A^{(\si)}_{\eps(t)},Q_{r(t)})\geq a(t)^{-2}\left(\max_{z\in Z_t}\si a(t)^2\big\langle \xi_{\eps(t)},(\tau_z\phi^{(1/a(t))})^2\big\rangle-\tfrac12 \mc E(\phi)\right)
\end{align}
for every $\phi\in S(Q_{\ka_2})$.
Until further notice, we fix a $\phi\in S(Q_{\ka_2})$.
For every $t>0$, denote the $Z_t$-indexed stochastic process
\begin{align}
\label{Equation: Ctn Lower Bound Process}
X_t(z):=\si a(t)^2\big\langle \xi_{\eps(t)},(\tau_z\phi^{(1/a(t))})^2\big\rangle,\qquad z\in Z_t.
\end{align}
By a straightforward change of variables similar to \eqref{Equation: Regular Localization Lower Bound 3},
we see that
$X_t$ is a centered stationary Gaussian process with covariance
\begin{align}
\label{Equation: Change of Var Covariance}
&\si^2a(t)^{4-2d}\int_{(\mbb R^d)^2}\phi\left(\frac{x-z}{a(t)}\right)^2R_{\eps(t)}(x-y)\phi\left(\frac{y-z'}{a(t)}\right)^2\d x\dd y\\
\nonumber
&=\si^2a(t)^{4-d}\int_{(\mbb R^d)^2}\phi\left(x-\frac{z}{a(t)}\right)^2a(t)^dR_{\eps(t)}\big(a(t)(x-y)\big)\phi\left(y-\frac{z'}{a(t)}\right)^2\d x\dd y\\
\nonumber
&=\si^2a(t)^{4-d}\big\langle\tau_{z/a(t)}\phi^2,\tau_{z'/a(t)}\phi^2\big\rangle_{R_{\eps(t)/a(t)}}.
\end{align}
We note that for any distinct $z,z'\in Z_t$ and $\phi\in S(Q_{\ka_2})$,
the supports of $\tau_{z/a(t)}\phi^2$ and $\tau_{z'/a(t)}\phi^2$
are separated by at least $\ka$ in $\ell^\infty$ norm.
Given the asymptotic \eqref{Equation: log r(t) singular},
we have that $\eps(t)/a(t)\to0$ as $t\to\infty$ when $\eps(t)$ is in the singular phase. Therefore,
at least for large $t$, $\big(X_t(z)\big)_{z\in Z_t}$ are i.i.d. random variables
with variance $\si^2a(t)^{4-d}\|\phi^2\|_{R_{\eps(t)/a(t)}}^2$
thanks to the fact that $R$ is supported in $Q_{\ka/2}$.
With this said,
we claim that a lower
bound for \eqref{Equation: Singular Eigenvalue}
is a consequence of the following:

\begin{proposition}
\label{Proposition: Continuous Lower Bound Asymptotics}
Let $X_t$ be as in \eqref{Equation: Ctn Lower Bound Process}.
For every $\ka_1,\ka_2>0$
and $\phi\in S(Q_{\ka_2})$,
\[\mbf E[X_t(0)^2]=\si^2a(t)^{4-d}\|\phi\|_4^4\big(1+o(1)\big)\qquad
\text{as }t\to\infty,\]
noting that $a(t)^{4-d}=\big(\ka_1\log r(t)\big)^{-1}$.
\end{proposition}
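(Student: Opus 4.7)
The plan is to reduce the proposition to a standard mollifier convergence. Specializing the covariance formula \eqref{Equation: Change of Var Covariance} to $z = z' = 0$ gives
\[
\mbf E[X_t(0)^2] = \si^2 a(t)^{4-d}\,\|\phi^2\|_{R_{\eps(t)/a(t)}}^2,
\]
and since $a(t)^{4-d} = (\ka_1 \log r(t))^{-1}$ by construction, it suffices to prove that $\|\phi^2\|_{R_{\eta_t}}^2 \to \|\phi\|_4^4$ as $t \to \infty$, where I write $\eta_t := \eps(t)/a(t)$.

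First, I would verify that $\eta_t \to 0$ in the singular phase. From \eqref{Equation: log r(t) singular} one has $\log r(t)$ asymptotic to $\al \log t$, so $a(t)$ is of order $(\log t)^{-1/(4-d)}$ up to a constant; on the other hand, the singular phase assumption forces $\eps(t) \ll (\log t)^{-1/(4-d)}$, so $\eta_t = \eps(t)/a(t) \to 0$ as claimed. (This is the same observation the author makes just before stating the proposition, so no new work is required.)

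With this in hand, by definition of the covariance seminorm I have $\|\phi^2\|_{R_{\eta_t}}^2 = \|\phi^2 * \bar R_{\eta_t}\|_2^2$. Since $\phi \in C_0^\infty(Q_{\ka_2})$, the function $\phi^2$ is continuous and compactly supported, and the family $(\bar R_{\eta_t})_t$ is a standard approximate identity: each $\bar R_{\eta_t}$ is a probability density by the assumed properties of $\bar R$, and its support shrinks to the origin as $\eta_t \to 0$ because $\bar R$ is compactly supported. The H\"older bound \eqref{Equation: Holder Constant} gives a uniform modulus of continuity for $\phi^2$, which, combined with a standard mollifier estimate, yields $\phi^2 * \bar R_{\eta_t} \to \phi^2$ uniformly; together with the uniform compact support of the convolutions, this upgrades to convergence in $L^2(\mbb R^d)$, and hence $\|\phi^2 * \bar R_{\eta_t}\|_2^2 \to \|\phi^2\|_2^2 = \|\phi\|_4^4$.

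No serious obstacle arises here. The entire content is effectively an application of \eqref{Equation: Continuous Limit of Forms} to the specific test function $\phi^2$, with the only subtlety being that the mollification scale is $t$-dependent rather than a fixed $\eps \to 0$; the verification that $\eps(t)/a(t) \to 0$ in the singular phase is precisely what allows one to reuse that argument verbatim.
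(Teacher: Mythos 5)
Your proof is correct and follows the same route as the paper, which simply cites \eqref{Equation: Continuous Limit of Forms} together with the observation that $\eps(t)/a(t)\to 0$ in the singular phase; you are just unpacking the standard mollifier argument that underlies that equation. One small misattribution: the uniform modulus of continuity you invoke is for $\phi^2$, which already follows from $\phi\in C_0^\infty(Q_{\ka_2})$, not from the H\"older bound \eqref{Equation: Holder Constant} on $\bar R$ (the latter plays no role in this proposition).
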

\begin{proof}
$\eps(t)\ll a(t)$ in the singular phase; hence
$\|\phi^2\|_{R_{\eps(t)/a(t)}}^2\to\|\phi\|_4^4$ by \eqref{Equation: Continuous Limit of Forms}.
\end{proof}

To see this, we apply a standard lower tail bound for suprema of i.i.d. Gaussians:
For large enough $t$ the $X_t(z)$ are i.i.d. copies of $X_t(0)$, and thus
\[\mbf P\left[\sup_{z\in Z_t}X_t(z)\leq\si\|\phi\|_4^2\right]=\big(1-\mbf P[X_t(0)>\si\|\phi\|_4^2]\big)^{|Z_t|}.\]
We recall the classical Gaussian tail lower bound:
If $N\sim N(0,\mf v^2)$ for some $\mf v>0$,
then for every $\th>1$ and large enough $\la>0$, one has
$\mbf P[N\geq\la]\geq\mr e^{-\th\la^2/2\mf v^2}$.
Therefore, if we assume that $0<\ka_1<2d$ and then take $t>0$ large enough so that
\[-\frac{\th\si^2\|\phi\|_4^4}{2\mbf E[X_t(0)^2]}
=-\frac{\th\big(1+o(1)\big)}{2}\ka_1\log r(t)\geq-\eta d\log r(t)\]
for some $0<\eta<1$ (by Proposition \ref{Proposition: Continuous Lower Bound Asymptotics}), then
\begin{multline*}
\big(1-\mbf P[X_t(0)>\si\|\phi\|_4^2]\big)^{|Z_t|}
\leq\left(1-\frac{1}{r(t)^{\eta d}}\right)^{|Z_t|}\\
\leq\left(1-\frac{r(t)^{d(1-\eta)}}{r(t)^d}\right)^{cr(t)^d}
\leq\mr e^{-cr(t)^{d(1-\eta)}},
\end{multline*}
where the third inequality follows from Remark \ref{Remark: Z_t lower bound size}.
By \eqref{Equation: General Radius}, $r(t)\geq\theta t^\al$ for some $\theta,\al>0$ for large $t$,
and thus every sequence of $t>0$ such that $t\to\infty$ has a subsequence $(t_n)_{n\in\mbb N}$
such that $\sum_n\mr e^{-cr(t_n)^{d(1-\eta)}}<\infty$.
Thus, by applying the Borel-Cantelli lemma to the maximum of \eqref{Equation: Ctn Lower Bound Process},
and then combining this with \eqref{Equation: Singular Eigenvalue Lower Bound}, we
conclude the following: For every $0<\ka_1<2d$, $\ka_2>0$, and $\phi\in S(Q_{\ka_2})$,
every sequence of $t>0$ such that $t\to\infty$ has a
subsequence $(t_n)_{n\in\mbb N}$ along which
\[\liminf_{n\to\infty}\frac{\La_1(A^{(\si)}_{\eps(t_n)},Q_{r(t_n)})}{a(t_n)^{-2}}\geq\si\|\phi\|_4^2-\tfrac12 \mc E(\phi)\qquad\text{almost surely}.\]
Given that $a(t)^{-2}=(\ka_1)^{2/(4-d)}(\log r(t))^{2/(4-d)}$,
up to selecting further subsequences of $t_n$, if we take $\ka_1\to2d$, $\ka_2\to\infty$,
and a sequence of $\phi$'s that achieves the supremum of $\si\|\phi\|_4^2-\tfrac12 \mc E(\phi)$
over $S(\mbb R^d)$,
then we obtain that
\[\liminf_{n\to\infty}\frac{\La_1(A^{(\si)}_{\eps(t_n)},Q_{r(t_n)})}{(\log r(t_n))^{2/(4-d)}}\geq(2d)^{2/(4-d)}\sup_{\phi\in S(\mbb R^d)}\big(\si\|\phi\|_4^2-\tfrac12 \mc E(\phi)\big)\quad\text{almost surely}.\]
This provides a lower bound for \eqref{Equation: Singular Eigenvalue}
by Proposition \ref{Proposition: L4 Sup Scaling Property} and
Lemma \ref{Lemma: Lower Bound Constant}.

\subsubsection{Upper Bound for \eqref{Equation: Singular Eigenvalue}}
\label{Section: Eigenvalues Singupar Phase Upper}

Let $\eps(t)$ be in the singular phase.
Let $\ka_1,\ka_2>0$ be fixed, and $a(t)$ be as in \eqref{Equation: Singular Microbox}.
By a straightforward rescaling (i.e., $\phi\mapsto\phi^{(1/a(t))}$ by \eqref{Equation: L2 Rescaling}
and Remark \ref{Remark: L2 Rescaling}, see also \eqref{Equation: Change of Var Covariance}), we have that
\begin{align}
\label{Equation: Singular Pre-Upper Bound}
\La_1(A^{(\si)}_{\eps(t)},Q_{r(t)})=a(t)^{-2}\La_1(A^{(\si a(t)^{(4-d)/2})}_{\eps(t)/a(t)},Q_{r(t)/a(t)}).
\end{align}
Let us define $Z_t:=2\ka_2\mbb Z_d\cap Q_{r(t)/a(t)}.$

\begin{remark}
\label{Remark: Z_t upper bound size}
By definition of $a(t)$, it is clear that there exists constants $c_1,c_2>0$
such that $|Z_t|\leq c_1r(t)^d\big(\log r(t)\big)^{c_2}$ for large $t>0$.
\end{remark}

By applying Lemma \ref{Lemma: Localization Upper Bound} to the right-hand
side of \eqref{Equation: Singular Pre-Upper Bound},
we obtain
\begin{multline}
\label{Equation: Singular Eigenvalue Upper Bound In Proof}
\La_1(A^{(\si)}_{\eps(t)},Q_{r(t)})\\
\leq
a(t)^{-2}\left(\frac{C}{\ka_2}
+\max_{z\in Z_t}\sup_{\phi\in S(Q_{\ka_2+1})}\big(\si a(t)^{(4-d)/2}\big\langle \xi_{\eps(t)/a(t)},(\tau_z\phi)^2\big\rangle-\tfrac12 \mc E(\phi)\big)\right)
\end{multline}
where the constant $C>0$ is independent of $t$, $\ka_1$, and $\ka_2$.
In order to control this quantity, we use an idea due to Chen \cite[Page 593]{Chen12}:
For every $\phi\in S(Q_{\ka_2+1})$, the function $\psi:=\phi\big(1+\tfrac12 \mc E(\phi)\big)^{-1/2}$
is an element of $W(Q_{\ka_2+1})$ because
\begin{align}
\|\psi\|_2^2+\tfrac12\mc E(\psi)=\int_{\mbb R^d}\psi(x)^2+\tfrac12|\nabla\psi(x)|_2^2\d x
=\frac{\int_{\mbb R^d}\phi(x)^2+\tfrac12|\nabla\phi(x)|_2^2\d x}{1+\tfrac12 \mc E(\phi)}=1
\end{align}
(recall the definition of the latter function space in \eqref{Equation: W Function Space}). Therefore,
\begin{align*}
&\sup_{\phi\in S(Q_{\ka_2+1})}\big(\si a(t)^{(4-d)/2}\big\langle \xi_{\eps(t)/a(t)},(\tau_z\phi)^2\big\rangle-\tfrac12 \mc E(\phi)\big)\\
&\leq\sup_{\phi\in S(Q_{\ka_2+1})}\left(\sup_{\psi\in W(Q_{\ka_2+1})}\si a(t)^{(4-d)/2}\big\langle \xi_{\eps(t)/a(t)},(\tau_z\psi)^2\big\rangle\big(1+\tfrac12 \mc E(\phi)\big)-\tfrac12 \mc E(\phi)\right)\\
&\leq\sup_{\upsilon>0}\left(\sup_{\psi\in W(Q_{\ka_2+1})}\si a(t)^{(4-d)/2}\big\langle \xi_{\eps(t)/a(t)},(\tau_z\psi)^2\big\rangle(1+\upsilon)-\upsilon\right).
\end{align*}
Given that
\begin{align}
\sup_{\upsilon>0}\big(\mf A(1+\upsilon)-\upsilon\big)
=\sup_{\upsilon>0}\big(\mf A+(\mf A-1)\upsilon\big)
=\begin{cases}
\mf A&\text{if }\mf A\leq1,\\
\infty&\text{otherwise},
\end{cases}
\end{align}
we then have the inclusion of events
\begin{multline*}
\left\{\sup_{\phi\in S(Q_{\ka_2+1})}\big(\si a(t)^{(4-d)/2}\big\langle \xi_{\eps(t)/a(t)},(\tau_z\phi)^2\big\rangle-\tfrac12 \mc E(\phi)\big)>1\right\}\\
\subset\left\{\sup_{\psi\in W(Q_{\ka_2+1})}\si a(t)^{(4-d)/2}\big\langle \xi_{\eps(t)/a(t)},(\tau_z\psi)^2\big\rangle>1\right\}
\end{multline*}
for any $z$.
Since $\xi_{\eps(t)}$ is stationary,
the $W(Q_{\ka_2+1})$-valued Gaussian processes
\[\psi\mapsto\si a(t)^{(4-d)/2}\big\langle \xi_{\eps(t)/a(t)},(\tau_z\psi)^2\big\rangle,\qquad z\in Z_t\]
are identically distributed for all $z\in Z_t$, and thus by a union bound this implies that
\begin{multline}
\label{Equation: Singular Eigenvalue Upper Bound In Proof 2}
\mbf P\left[\max_{z\in Z_t}\sup_{\phi\in S(Q_{\ka_2+1})}\big(\si a(t)^{(4-d)/2}\big\langle \xi_{\eps(t)/a(t)},(\tau_z\phi)^2\big\rangle-\tfrac12 \mc E(\phi)\big)>1\right]\\
\leq|Z_t|\cdot\mbf P\left[\sup_{\psi\in W(Q_{\ka_2+1})}\si a(t)^{(4-d)/2}\big\langle \xi_{\eps(t)/a(t)},(\tau_z\psi)^2\big\rangle>1\right].
\end{multline}
With this in hand, the upper bound for
\eqref{Equation: Singular Eigenvalue} can be reduced to
a standard Gaussian suprema
concentration bound (e.g., Lemma \ref{Lemma: Gaussian Upper Tails}) and
the following estimates:

\begin{proposition}
\label{Proposition: Continuous Upper Bound Asymptotics}
For every $\ka_1,\ka_2>0$, it holds that
\begin{align}
\label{Equation: Continuous W Asymptotics}
\sup_{\psi\in W(Q_{\ka_2+1})}\mbf E\left[\si^2a(t)^{(4-d)}\big\langle \xi_{\eps(t)/a(t)},\psi^2\big\rangle^2\right]
\leq \si^2a(t)^{4-d}\sup_{\psi\in W(\mbb R^d)}\|\psi\|_4^4<\infty
\end{align}
and
\begin{align}
\label{Equation: Continuous Dudley}
\lim_{t\to\infty}\mbf E\left[\sup_{\psi\in W(Q_{\ka_2+1})}\si a(t)^{(4-d)/2}\big\langle \xi_{\eps(t)/a(t)},\psi^2\big\rangle\right]=0.
\end{align}
\end{proposition}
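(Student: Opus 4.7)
The plan is to treat \eqref{Equation: Continuous W Asymptotics} and \eqref{Equation: Continuous Dudley} separately. For \eqref{Equation: Continuous W Asymptotics}, I would first rewrite the variance using the covariance structure of $\xi_\eta$: since this covariance equals $R_\eta = \bar R_\eta * \bar R_\eta$, one has $\mbf E[\langle \xi_\eta, \psi^2\rangle^2] = \|\psi^2 * \bar R_\eta\|_2^2$. Because $\bar R$ is a probability density, $\|\bar R_\eta\|_1 = 1$, and Young's convolution inequality yields $\|\psi^2 * \bar R_\eta\|_2 \leq \|\psi^2\|_2 = \|\psi\|_4^2$, which proves the first inequality in \eqref{Equation: Continuous W Asymptotics}. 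Finiteness of $\sup_{\psi \in W(\mbb R^d)} \|\psi\|_4^4$ then follows from the GNS inequality \eqref{Equation: GNS Intro}: on $W(\mbb R^d)$ both $\|\psi\|_2^2$ and $\tfrac12\mc E(\psi)$ lie in $[0,1]$, so $\|\psi\|_4^4 \leq \mf G_d\, \mc E(\psi)^{d/2}\,\|\psi\|_2^{4-d}$ is uniformly bounded.

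For \eqref{Equation: Continuous Dudley}, the strategy is to apply Dudley's entropy bound to the centered Gaussian process
\[
Y_t(\psi) := \si a(t)^{(4-d)/2}\langle \xi_{\eps(t)/a(t)}, \psi^2\rangle, \qquad \psi \in W(Q_{\ka_2+1}),
\]
whose intrinsic pseudo-metric is $\rho_t(\psi,\psi')^2 = \si^2 a(t)^{4-d}\, \|(\psi^2 - \psi'^2) * \bar R_{\eps(t)/a(t)}\|_2^2$. Part~(1) already gives diameter $O(a(t)^{(4-d)/2})$ for $W(Q_{\ka_2+1})$ under $\rho_t$, so it suffices to verify that the Dudley integral $\int_0^{\mr{diam}} \sqrt{\log N(W(Q_{\ka_2+1}), \rho_t, u)}\,du$ tends to $0$ as $t \to \infty$. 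To estimate covering numbers, I would combine $H^1$-compactness of $W(Q_{\ka_2+1})$ on the bounded box $Q_{\ka_2+1}$ (via Sobolev embedding, valid for $d \leq 3$) with the scale-$\eta$ smoothing induced by $\bar R_\eta$, where $\eta = \eps(t)/a(t)$. The Hölder bound \eqref{Equation: Holder Constant} yields $|\bar R_\eta(x) - \bar R_\eta(y)| \leq C \eta^{-(d+h)}|x-y|^h$, which via interpolation between $L^2$ control and this modulus of continuity produces a quantitative bound on $\rho_t$ in terms of a Sobolev metric on $\psi$, and hence a covering-number estimate polynomial in $\eta^{-1}/u$.

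The main obstacle is this quantitative bookkeeping. Using Young's inequality alone, one only gets $\rho_t(\psi,\psi') \lesssim a(t)^{(4-d)/2}\, \|\psi-\psi'\|_4\,\|\psi+\psi'\|_4$, which is too weak: in $d = 2, 3$ the $L^4$-metric entropy of $W(Q_{\ka_2+1})$ is large enough that the resulting Dudley integral need not even converge. One must therefore invoke the smoothness of $\bar R_\eta$, but at the cost of a factor polynomial in $\eta^{-1}$ in the covering-number estimate. The assumption $h > d/4$ is precisely what keeps the entropy integral finite, while the lower bound $\eps(t) \gg (\log t)^{-1/(4-d) - \mf c_d}$ translates into the polynomial loss in $\eta^{-1} = a(t)/\eps(t)$ being dominated by the prefactor $a(t)^{(4-d)/2}$, so that the overall bound vanishes in the limit. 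Once the Dudley bound is in place, \eqref{Equation: Continuous Dudley} follows by sending $t \to \infty$.
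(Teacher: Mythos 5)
Your proof of \eqref{Equation: Continuous W Asymptotics} is exactly the paper's: $\mbf E[\langle\xi_\eta,\psi^2\rangle^2] = \|\psi^2*\bar R_\eta\|_2^2 \leq \|\psi^2\|_2^2 = \|\psi\|_4^4$ by Young's inequality (since $\|\bar R_\eta\|_1=1$), with finiteness of $\sup_{W(\mbb R^d)}\|\psi\|_4^4$ from GNS. No issues there.

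For \eqref{Equation: Continuous Dudley} the high-level strategy — Dudley's entropy bound, exploiting the $\eta$-scale smoothing and Hölder continuity of $\bar R_\eta$ to control the metric entropy, with $h>d/4$ and the lower bound on $\eps(t)$ balancing the polynomial loss in $\eta^{-1}$ against the $a(t)^{(4-d)/2}$ prefactor — is indeed the paper's approach for $d=2,3$. But there are two gaps. First, your covering-number step is too vague to stand: the phrase $``$interpolation between $L^2$ control and this modulus of continuity produces a quantitative bound on $\rho_t$ in terms of a Sobolev metric$"$ is not a proof. The paper does something concrete: it builds an explicit $\eps$-net by projecting each smoothed profile $\psi^2*\bar R_\eta$ onto piecewise-constant functions $\Pi_{\mu,\nu,M}$ (spatial mesh $\nu$, value mesh $\mu$, truncation level $M$), then bounds the approximation error by $\mf C(\mu + \eta^{-(d+h)}\nu^h + M^{-1/2})$, using the Hölder modulus of $\bar R_\eta$ for the $\nu$-term and an $L^6$-GNS bound together with Markov/H\"older for the truncation term, and finally counts the image of the projection. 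The entropy one obtains this way is $\log N_t(\ze) \asymp \ze^{-d/h}\eta^{-d^2/h - d}\log(1/\ze)$, which is \emph{exponentially} large in a power of $1/\ze$; describing the covering number as $``$polynomial in $\eta^{-1}/u$$"$ gets the order of magnitude qualitatively wrong and obscures why $h>d/4$ is the critical condition for the entropy integral to converge. You would need to actually carry out such a quantitative $\eps$-net (or invoke the Kolmogorov--Tikhomirov Hölder-ball entropy theorem with explicit dependence on the Hölder constant) to close this step.

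Second, and more substantively, your proposal does not address $d=1$, and as written it fails there. In the $d=1$ singular phase there is \emph{no} lower bound on $\eps(t)$, so $\eta=\eps(t)/a(t)$ can tend to zero arbitrarily fast, and the $\eta^{-(d+h)}$ blow-up in your Hölder-based covering bound is no longer dominated by the $a(t)^{(4-d)/2}=(\ka_1\log r(t))^{-1/2}$ prefactor. The paper handles $d=1$ by a different argument that avoids the smoothing entirely: Young's inequality gives $P_t(\psi,\tilde\psi)\leq(\ka_1\log r(t))^{-1/2}\|\psi^2-\tilde\psi^2\|_2$, and the Dudley integral for the white-noise pseudometric $\|\psi^2-\tilde\psi^2\|_2$ on $W(Q_{\ka_2+1})$ is already known to be finite when $d=1$ (this is the content of \cite[(2.7)]{Chen12}; it crucially uses $d=1$ and fails for $d\geq2$). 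You gesture at the distinction by noting that $``$in $d=2,3$ the $L^4$-metric entropy ... need not converge,$"$ but you never state what the $d=1$ argument would be, and the one you do give does not apply there. Splitting the proof into a smoothing-free $d=1$ case and a Hölder-based $d=2,3$ case is essential.
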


To see this, let us henceforth denote for simplity,
\[\mf s:=\sup_{\psi\in W(\mbb R^d)}\|\psi\|_4^4\]
as well as the median of the supremum
\[\mf m_t:=\mbf{Med}\left[\sup_{\psi\in W(Q_{\ka_2+1})}\si a(t)^{(4-d)/2}\big\langle \xi_{\eps(t)/a(t)},(\tau_z\psi)^2\big\rangle\right]\]
and maximal standard deviation
\[\mf v_t=\sup_{\psi\in W(Q_{\ka_2+1})}\mbf E\left[\si^2a(t)^{(4-d)}\big\langle \xi_{\eps(t)/a(t)},\psi^2\big\rangle^2\right]^{1/2}.\]
Then, it follows from \eqref{Equation: Median vs Dudley} in Lemma \ref{Lemma: Gaussian Upper Tails}
that
\[\mf m_t=\mbf E\left[\sup_{\psi\in W(Q_{\ka_2+1})}\si a(t)^{(4-d)/2}\big\langle \xi_{\eps(t)/a(t)},\psi^2\big\rangle\right]
+O\left(\mf v_t\right)=o(1)\]
for any $\ka_1,\ka_2>0$, where the last equality follows from
Proposition \ref{Proposition: Continuous Upper Bound Asymptotics} and the fact that $a(t)^{4-d}=o(1)$.
In particular, we can find $t>0$ large enough so that
\[-\frac{(1-\mf m_t)^2}{2\mf v_t^2}=-\frac{1+o(1)}{2\mf v_t^2}\leq-\frac{1+o(1)}{2\si^2 a(t)^{4-d}\mf s}=-\frac{1+o(1)}{2\si^2 \mf s}\ka_1\log r(t),\]
where the third inequality follows from \eqref{Equation: Continuous W Asymptotics} and the last equality from
the definition of $a(t)$ in \eqref{Equation: Singular Microbox}.
Thus, an application of Lemma \ref{Lemma: Gaussian Upper Tails} to the present setting with $\la=1$
implies that
\[\mbf P\left[\sup_{\psi\in W(Q_{\ka_2+1})}\si a(t)^{(4-d)/2}\big\langle \xi_{\eps(t)/a(t)},(\tau_z\psi)^2\big\rangle>1\right]
\leq\exp\left(-\frac{1+o(1)}{2\si^2\mf s}\ka_1\log r(t)\right).\]
Consequently, for every choice of $\ka_1>2d\si^2\mf s$ and $\ka_2>0$, a combination of the above
bound with Remark \ref{Remark: Z_t upper bound size} and \eqref{Equation: Singular Eigenvalue Upper Bound In Proof 2}
yields
\begin{multline*}
%\label{Equation: Singular Eigenvalue Upper Bound In Proof 2}
\mbf P\left[\max_{z\in Z_t}\sup_{\phi\in S(Q_{\ka_2+1})}\big(\si a(t)^{(4-d)/2}\big\langle \xi_{\eps(t)/a(t)},(\tau_z\phi)^2\big\rangle-\tfrac12 \mc E(\phi)\big)>1\right]\\
\leq c_1r(t)^d\big(\log r(t)\big)^{c_2}\mr e^{-\de d\log r(t)}=c_1r(t)^{d(1-\de)}\big(\log r(t)\big)^{c_2}
\end{multline*}
for some $\de>1$ close enough to 1.
For any sequence of $t>0$ going to infinity,
by \eqref{Equation: General Radius} we can always extract a
sparse enough subsequence $(t_n)_{n\in\mbb N}$ such that
\[\sum_{n\in\mbb N}c_1r(t_n)^{d(1-\de)}\big(\log r(t_n)\big)^{c_2}<\infty.\]
Hence we obtain the following by an application of the Borel-Cantelli lemma:
For every choice of $\ka_1>2d\si^2\mf s$ and $\ka_2>0$,
every sequence of $t>0$ such
that $t\to\infty$ has a subsequence $(t_n)_{n\in\mbb N}$ along which
\[\limsup_{n\to\infty}\max_{z\in Z_t}\sup_{\phi\in S(Q_{\ka_2+1})}\big(\si a(t)^{(4-d)/2}\big\langle \xi_{\eps(t)/a(t)},(\tau_z\phi)^2\big\rangle-\tfrac12 \mc E(\phi)\big)\leq1\]
almost surely. By \eqref{Equation: Singular Eigenvalue Upper Bound In Proof}, this then implies that
\[\limsup_{n\to\infty}\frac{\La_1(A^{(\si)}_{\eps(t_n)},Q_{r(t_n)})}{a(t_n)^{-2}}\leq\frac{C}{\ka_2}+1
\qquad\text{almost surely},\]
which,
by taking $\ka_1\to2d\mf s$ and $\ka_2\to\infty$ and
(and further subsequences of $t_n$ if needed), yields
\[\limsup_{n\to\infty}\frac{\La_1(A^{(\si)}_{\eps(t_n)},Q_{r(t_n)})}{(\log r(t_n))^{2/(4-d)}}\leq(2d\si^2\mf s)^{2/(4-d)}
\qquad\text{almost surely}.\]
This then provides an upper bound for \eqref{Equation: Singular Eigenvalue}
by Lemma \ref{Lemma: Upper Bound Constant}.
We now conclude the proof of Theorem \ref{Theorem: Eigenvalue Technical} by proving Proposition
\ref{Proposition: Continuous Upper Bound Asymptotics}:

\begin{proof}[Proof of Proposition \ref{Proposition: Continuous Upper Bound Asymptotics}]
We begin by proving \eqref{Equation: Continuous W Asymptotics}.
By definition,
\begin{align}
\label{Equation: Continuous W Variance}
\mbf E\left[\big\langle \xi_{\eps(t)/a(t)},\psi^2\big\rangle^2\right]=\|\psi^2\|_{R_{\eps(t)/a(t)}}^2
\end{align}
for every $\psi\in W(Q_{\ka_2+1})$. Since $R_{\eps}$ integrates to one for every $\eps>0$,
it follows from Young's convolution inequality that
$\|\psi^2\|_{R_{\eps(t)/a(t)}}^2\leq\|\psi\|_4^4.$
We then get \eqref{Equation: Continuous W Asymptotics} by the trivial bound
\[\sup_{\psi\in W(Q_{\ka_2+1})}\|\psi\|_4^4\leq\sup_{\psi\in W(\mbb R^d)}\|\psi\|_4^4.\]
(The fact that the above is finite is proved in Lemma \ref{Lemma: Upper Bound Constant}.)

We now prove \eqref{Equation: Continuous Dudley}.
Let us define the pseudometrics
\[P_t(\psi,\tilde\psi):=a(t)^{(4-d)/2}
\mbf E\left[\big\langle \xi_{\eps(t)/a(t)},\psi^2-\tilde\psi^2\big\rangle^2\right]^{1/2},
\qquad \psi,\tilde\psi\in W(Q_{\ka_2+1})\]
for $t\geq0$.
Since $a(t)^{(4-d)/2}=\big(\ka_1\log r(t)\big)^{-1/2}$,
we have that
\[P_t(\psi,\tilde\psi)=\big(\ka_1\log r(t)\big)^{-1/2}\|\psi^2-\tilde\psi^2\|_{R_{\eps(t)/a(t)}}.\]
For every $\ze>0$, let us denote by $N_t(\ze)$ the covering number of $W(Q_{\ka_2+1})$
with open balls of radius $\ze$ in $P_t$. By Dudley's theorem
(e.g., \cite[Theorem 11.17]{LedouxTalagrand}), to prove \eqref{Equation: Continuous Dudley}
it is enough to show that
\begin{align}
\label{Equation: Continuous Dudley In Proof}
\lim_{t\to\infty}\int_0^\infty\sqrt{\log N_t(\ze)}\d\ze=0.
\end{align}

We first prove \eqref{Equation: Continuous Dudley In Proof}
in the case $d=1$, where we recall that we impose no lower bound on $\eps(t)$
in the singular phase.
Let us define
\[P_*(\psi,\tilde\psi)=\|\psi^2-\tilde\psi^2\|_2,\qquad\psi,\tilde\psi\in W(Q_{\ka_2+1}).\]
We note that this is the pseudometric associated to the one-dimensional Gaussian white noise.
By Young's convolution inequality, we have that
\[P_t(\psi,\tilde\psi)\leq\big(\ka_1\log r(t)\big)^{-1/2}P_*(\psi,\tilde\psi),\qquad t\geq 1,~\psi,\tilde\psi\in W(Q_{\ka_2+1}).\]
Thus, if we let $N_*(\zeta)$ denote the covering number of $W(Q_{\ka_2+1})$
by $\zeta$-balls in $P_*$, we have the inequality
\[\int_0^\infty \sqrt{\log N_t(\zeta)}\d\zeta\leq(\ka_1\log t)^{-1/2}\int_0^\infty\sqrt{\log N_*(\zeta)}\d\zeta.\]
Thanks to \cite[(2.7)]{Chen12}, we know that $\int_0^\infty \sqrt{\log N_*(\zeta)}\d\zeta<\infty$,
hence the result.

\begin{remark}
In the paper \cite{Chen12}, the one-dimensional Gaussian
white noise is referred throughout as the $``$context of Theorem 1.4$"$.
The space $W(\Om)$ for $\Om\subset\mbb R^d$ is denoted by
$\mc G_d(\Om)$ (see \cite[(2.2)]{Chen12}).
We note that the argument used to prove \cite[(2.7)]{Chen12}
cannot be extended to $d=2,3$, since a crucial assumption
in that result (i.e., \cite[(1.9)]{Chen12}) does not hold for Gaussian
white noise in $d>1$.
\end{remark}

We now prove \eqref{Equation: Continuous Dudley In Proof}
in the case $d=2,3$.

\begin{definition}
To improve readability,
for every $\psi\in W(Q_{\ka_2+1})$ and $\eps>0$,
we denote $\psi^2_\eps:=\psi^2*\bar R_\eps$
for the remainder of this proof.
\end{definition}

We begin by bounding the upper limit of integration in \eqref{Equation: Continuous Dudley In Proof}.
Recalling that, for every $\eps>0$, $R_\eps=\bar R_\eps*\bar R_\eps$
and $\bar R_\eps$ is even, we can write
\[\|\psi^2-\tilde\psi^2\|_{R_{\eps(t)/a(t)}}
=\|\psi^2_{\eps(t)/a(t)}-\tilde\psi^2_{\eps(t)/a(t)}\|_2
\leq\|\psi^2-\tilde\psi^2\|_2
\leq\|\psi\|_4^2+\|\tilde\psi\|_4^2,\]
where the first inequality follows from Young's convolution inequality.
Then, by the GNS inequality \eqref{Equation: GNS Intro} and the fact that
$\|\psi\|_2^2,\frac12 \mc E(\psi)\leq 1$ for all $\psi\in W(Q_{\ka_2+1})$, we have that
$\|\psi^2-\tilde\psi^2\|_{R_{\eps(t)/a(t)}}\leq2^{1+d/4}\mf G_d^{1/2}$.
Thus,
\begin{align}
\label{Equation: Continuous Dudley In Proof 2}
\int_0^\infty\sqrt{\log N_t(\ze)}\d\ze=\int_0^{O((\log r(t))^{-1/2})}\sqrt{\log N_t(\ze)}\d\ze.
\end{align}

Since $\eps(t)\ll a(t)$ and $\bar R$ is compactly supported, we can fix a $\ka>\ka_2$
such that $\psi^2_{\eps(t)/a(t)}\in C_0^\infty(Q_\ka)$ for every $\psi\in W(Q_{\ka_2+1})$ and $t\geq0$.
In order to estimate the covering number $N_t$, we make use of an $\eps$-net argument using the following projections:

\begin{definition}
For every $\mu,\nu,M>0$ and nonnegative $\phi\in C_0^\infty(\mbb R^d)$,
define
\[\Pi_{\mu,\nu,M}(\phi):=\sum_{z\in2\nu\mbb Z^d}\min\big\{\lfloor\phi(z)\rfloor_\mu,M\big\}\mbf 1_{z+[-\nu,\nu)^d},\]
where $\lfloor x\rfloor_\mu:=\max\{y\in\mu\mbb Z:y\leq x\}$ for every
$x\in\mbb R$.
\end{definition}

\begin{remark}
\label{Equation: Image of continuous projection}
The image of all nonnegative $\phi\in C_0^\infty(Q_{\ka})$ through
$\Pi_{\mu,\nu,M}$ has
cardinality of order $(M/\mu)^{O(\nu^{-d})}=\mr e^{O(\nu^{-d}\log(M/\mu))}$
as $\mu,\nu\to0$ and $M\to\infty$.
\end{remark}

We claim that we have the inequality
\begin{multline}
\label{Equation: Epsilon Net Approximation}
\sup_{\psi\in W(Q_{\ka_2+1})}\|\psi^2_{\eps(t)/a(t)}-\Pi_{\mu,\nu,M}(\psi^2_{\eps(t)/a(t)})\|_2\\
\leq \mf C\left(\mu+(\eps(t)/a(t))^{-(d+h)}\nu^h+M^{-1/2}\right),
\end{multline}
where $h>0$ is the H\"older exponent in \eqref{Equation: Holder Constant},
and the constant $\mf C$ only depends on $d$, $\ka$,
and the H\"older constant $C>0$ in \eqref{Equation: Holder Constant}.
In order to prove \eqref{Equation: Epsilon Net Approximation},
we use the following decomposition:
\begin{multline}
\label{Equation: Epsilon Net Approximation Two Terms}
\|\psi^2_{\eps(t)/a(t)}-\Pi_{\mu,\nu,M}(\psi^2_{\eps(t)/a(t)})\|^2_2
=\int_{\{\psi^2_{\eps(t)/a(t)}> M\}}(\psi^2_{\eps(t)/a(t)}(x)-M)^2\d x\\
+\int_{\{0<\psi^2_{\eps(t)/a(t)}\leq M\}}\Big(\psi^2_{\eps(t)/a(t)}(x)-\Pi_{\mu,\nu,M}(\psi^2_{\eps(t)/a(t)})(x)\Big)^2\d x.
\end{multline}

\begin{definition}
In what follows, we use $\mf C>0$ to denote positive constants that
(possibly) only depend on $d$, $\ka$,
and the $C$ in \eqref{Equation: Holder Constant}, and
whose exact values may change from line to line.
\end{definition}

We begin by controlling the first term on the right-hand side of \eqref{Equation: Epsilon Net Approximation Two Terms}.
Given that $(a+b)^2\leq2(a^2+b^2)$, we have
\begin{multline*}
\int_{\{\psi^2_{\eps(t)/a(t)}> M\}}(\psi^2_{\eps(t)/a(t)}(x)-M)^2\d x\\\leq
2\int_{\{\psi^2_{\eps(t)/a(t)}> M\}}\psi^2_{\eps(t)/a(t)}(x)^2\d x+2M^2\int_{\{\psi^2_{\eps(t)/a(t)}> M\}}\d x.
\end{multline*}
An application of Young's convolution inequality followed by the general $L^p$-GNS inequality
(e.g., \cite[(C.1)]{ChenBook}) implies that
\[\int_{\mbb R^d}(\psi^2_{\eps(t)/a(t)})(x)^3\d x
\leq\|\psi\|_6^6\\
\leq \mf C\|\psi\|_2^{p} \mc E(\psi)^{\tilde p}\leq\mf C,\]
where $p,\tilde p\geq0$ only depend on $d$.
Therefore, by H\"older's and Markov's inequalities,
\begin{align}\label{Equation: Equation: Epsilon Net Approximation First Term}
\nonumber
\int_{\{\psi^2_{\eps(t)/a(t)}> M\}}\psi^2_{\eps(t)/a(t)}(x)^2\d x&
\leq\left(\int_{\mbb R^d}\psi^2_{\eps(t)/a(t)}(x)^3\d x\right)^{2/3}
\left(\int_{\{\psi^2_{\eps(t)/a(t)}>M\}}\d x\right)^{1/3}\\
\nonumber
&\leq\mf C \left(M^{-3}\int_{\mbb R^d}\psi^2_{\eps(t)/a(t)}(x)^3\d x\right)^{1/3}\\
&\leq\mf CM^{-1}.
\end{align}
Applying Markov's inequality once again, we have
\begin{align}
\label{Equation: Equation: Epsilon Net Approximation First Term 2}
M^2\int_{\{\psi^2_{\eps(t)/a(t)}> M\}}\d x
\leq M^{-1}\int_{\mbb R^d}\psi^2_{\eps(t)/a(t)}(x)^3\d x\leq \mf C M^{-1}.
\end{align}

We now control the second term on the right-hand side of \eqref{Equation: Epsilon Net Approximation Two Terms}.
For any $x,y\in\mbb R^d$, since $\bar R$ is H\"older continuous
of order $h$
and $\|\psi\|_2^2\leq1$, one has
\begin{multline*}
|\psi^2_{\eps(t)/a(t)}(x)-\psi^2_{\eps(t)/a(t)}(y)|
=\left|\int_{\mbb R^d}\big(\bar R_{\eps(t)/a(t)}(x-z)-\bar R_{\eps(t)/a(t)}(y-z)\big)\psi(z)^2\d z\right|\\
\leq\mf C(\eps(t)/a(t))^{-(d+h)}|x-y|^h_2.
\end{multline*}
Thus, if $x$ is such that $|\psi^2_{\eps(t)/a(t)}(x)|\leq M$ and $x\in z+[-\nu,\nu)^d$ for some $z\in2\nu\mbb Z^d$,
\begin{align}
\Pi_{\mu,\nu,M}(\psi^2_{\eps(t)/a(t)})(x)=\lfloor\psi^2_{\eps(t)/a(t)}(z)\rfloor_\mu;
\end{align}
hence
\begin{align*}
&|\psi^2_{\eps(t)/a(t)}(x)-\Pi_{\mu,\nu,M}(\psi^2_{\eps(t)/a(t)})(x)|\\
&\leq|\psi^2_{\eps(t)/a(t)}(x)-\psi^2_{\eps(t)/a(t)}(z)|
+\big|\psi^2_{\eps(t)/a(t)}(z)-\lfloor\psi^2_{\eps(t)/a(t)}(z)\rfloor_\mu\big|\\
&\leq\mf C\left((\eps(t)/a(t))^{-(d+h)}\nu^h+\mu\right).
\end{align*}
Since $\{0<\psi^2_{\eps(t)/a(t)}\leq M\}\subset\mr{supp}(\psi^2_{\eps(t)/a(t)})\subset Q_\ka$,
we have that
\begin{multline*}
\int_{\{0<\psi^2_{\eps(t)/a(t)}\leq M\}}\big((\psi^2_{\eps(t)/a(t)})(x)-\Pi_{\mu,\nu,M}(\psi^2_{\eps(t)/a(t)})(x)\big)^2\d x\\
\leq\mf C\big((\eps(t)/a(t))^{-(d+h)}\nu^h+\mu\big)^2.
\end{multline*}
If we combine the above with \eqref{Equation: Equation: Epsilon Net Approximation First Term} and \eqref{Equation: Equation: Epsilon Net Approximation First Term 2},
we conclude that \eqref{Equation: Epsilon Net Approximation} holds.

With \eqref{Equation: Epsilon Net Approximation} established,
we are now ready to conclude the proof of \eqref{Equation: Continuous Dudley In Proof}:
Let $\mf C$ be the constant on
the right-hand side of \eqref{Equation: Epsilon Net Approximation}. Suppose that
we take
\begin{align}\label{Equation: al,be,M Conditions 1}
\mu\leq\frac{\ze}{6 \mf C},\qquad
(\eps(t)/a(t))^{-(d+h)}\nu^h\leq\frac{\ze}{6 \mf C},\qquad
M^{-1/2}\leq\frac{\ze}{6 \mf C},
\end{align}
which is equivalent to
\[\mu\leq\frac{\ze}{6\mf C},\qquad
\nu\leq\left(\frac{\ze(\eps(t)/a(t))^{d+h}}{6\mf C}\right)^{1/h},\qquad
M\geq\left(\frac{\ze}{6\mf C}\right)^{-2}.\]
Then, \eqref{Equation: Epsilon Net Approximation} implies that
\[\sup_{\psi\in W(Q_{\ka_2+1})}\|\psi^2_{\eps(t)/a(t)}-\Pi_{\mu,\nu,M}(\psi^2_{\eps(t)/a(t)})\|_2\leq\ze/2,\]
and thus any two $\psi,\tilde\psi\in W(Q_{\ka_2+1})$
such that
$\Pi_{\mu,\nu,M}(\psi^2_{\eps(t)/a(t)})
=\Pi_{\mu,\nu,M}(\tilde\psi^2_{\eps(t)/a(t)})$
will, by the triangle inequality, satisfy $P_t(\psi,\tilde\psi)\leq(\ka_1\log r(t))^{-1/2}\ze$. Therefore,
it follows from Remark \ref{Equation: Image of continuous projection} that, as $\ze\to0$,
\begin{multline*}
\sqrt{\log N_t\big(\ze/(\ka_1\log r(t))^{1/2}\big)}
=\sqrt{\log(\mr e^{O(\nu^{-d}\log(M/\mu))})}\\
=O\left(\sqrt{\zeta^{-d/h}\big(\eps(t)/a(t)\big)^{-d^2/h-d}\log(6\mf C/\zeta)}\right).
\end{multline*}
Consequently, by a change of variables, we are led to the asymptotic
\begin{multline}\label{Equation: Dudley with Epsilon Net}
\int_0^{O((\log r(t))^{-1/2})} \sqrt{\log N_t(\ze)}\d\ze\\
=O\left(\big(\log r(t)\big)^{-1/2}\big(\eps(t)/a(t)\big)^{-d^2/2h-d/2}\int_0^1\sqrt{\zeta^{-d/h}\log(6\mf C/\zeta)}\d\ze\right)
\end{multline}
as $t\to\infty$. Assuming $6\mf C\geq1$ (which we can always ensure
up to increasing the value of $\mf C$ in the upper bound \eqref{Equation: Epsilon Net Approximation})
and $h>d/4$, the integral on the right-hand side of \eqref{Equation: Dudley with Epsilon Net} is real and finite.
(Indeed, $\sqrt{\ze^{-d/h}}=\ze^{-1}$ when $h=d/4$.)
Therefore,
\[\int_0^\infty\sqrt{\log N_t(\ze)}\d\ze=O\left(\big(\log r(t)\big)^{-1/2}\big(\eps(t)/a(t)\big)^{-d^2/2h-d/2}\right).\]
Recalling that $a(t)=\big(\ka_1\log r(t)\big)^{-1/(4-d)}$,
and setting $\eps(t)\ll\big(\log r(t)\big)^{-\vartheta}$
for some $\vartheta>0$, we obtain that
\[\int_0^\infty\sqrt{\log N_t(\ze)}\d\ze=o\left(\big(\log r(t)\big)^{-(d^2+4h)/(8h-2dh)+\vartheta(d^2/2h+d/2)}\right).\]
This vanishes so long as
\[\vartheta\left(\frac{d^2}{2h}+\frac d2\right)-\frac{d^2+4h}{8h-2dh}<0\iff \vartheta<\frac{d^2+4h}{(4-d)d(d+h)}
=\frac{1}{4-d}+\frac{h}{d(d+h)}.\]
By definition of the the singular phase when $d=2,3$ (in particular \eqref{Equation: Singular Phase Lower Constant}
and the requirement $h>d/4$),
this concludes the proof of Proposition \ref{Proposition: Continuous Upper Bound Asymptotics}.
\end{proof}

\section{Quenched Total Mass Asymptotics}
\label{Section: Quenched Proof}

In this section, we prove \eqref{Equation: Main Regular TM}
and \eqref{Equation: Main Singular TM}. We begin with some
preliminary technical results in Section \ref{Section: TM Prelim},
and then prove the result in two steps in Sections
\ref{Section: Quenched Upper} and \ref{Section: Quenched Lower}.

\subsection{Preliminary Estimates}
\label{Section: TM Prelim}

\begin{proposition}
\label{Proposition: Quenched Prelim 1}
Let the function $\eta:[0,\infty)\to(0,\infty)$ be such that
\begin{align}
\label{Equation: Quenched Prelim 1}
\eta(t)=
\begin{cases}
o(1)&d=1\\
o\big(\eps(t)^{d/2}\big)&d=2,3
\end{cases},\qquad t\to\infty.
\end{align}
For every $\theta>0$, it holds in both regular and singular phases that
\[\lim_{t\to\infty} U^{(\theta)}_{\eps(t)}\big(\eta(t)\big)=1
\qquad\text{in probability,}\]
where we recall the definition of $U^{(\theta)}$ in Proposition \ref{Proposition: Quenched Upper Bound Technical}.
\end{proposition}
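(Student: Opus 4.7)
The plan is to prove convergence in probability via a sandwich argument, exploiting the fact that $U^{(\theta)}_{\eps(t)}(\eta(t))$ is strictly positive: if I can exhibit a (random) lower bound $L_t \leq U^{(\theta)}_{\eps(t)}(\eta(t))$ with both $L_t \to 1$ in probability and $\mbf E[L_t] \to 1$, and simultaneously show $\mbf E[U^{(\theta)}_{\eps(t)}(\eta(t))] \to 1$, then $U^{(\theta)}_{\eps(t)}(\eta(t)) - L_t \geq 0$ with vanishing expectation, so it converges to $0$ in $L^1$ (hence in probability), and combining with $L_t \to 1$ in probability closes the argument.

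For the upper bound on the expectation, I would use Fubini and the Gaussian MGF of $\xi$ to write
\[\mbf E\big[U^{(\theta)}_\eps(\eta)\big] = \mbf E^0\left[\exp\left(\tfrac{\theta^2}{2}\int_0^\eta\int_0^\eta R_\eps\big(B(s)-B(s')\big)\d s\dd s'\right)\right].\]
When $d=2,3$, the crude estimate $R_\eps(B(s)-B(s'))\leq R(0)\eps^{-d}$ bounds the exponent by $\tfrac{\theta^2}{2}R(0)\eta^2\eps^{-d}$, which vanishes precisely because $\eta(t)=o(\eps(t)^{d/2})$. When $d=1$ this is too weak (since $\eps(t)$ may decay much faster than $\eta(t)^2$), so I would instead bound the inner integral using Brownian local times $L^x_\eta$: Young's inequality and $\|R_\eps\|_1=1$ yield
\[\int_0^\eta\int_0^\eta R_\eps\big(B(s)-B(s')\big)\d s\dd s' = \int_{(\mbb R)^2} R_\eps(x-y)L^x_\eta L^y_\eta\d x\dd y\leq \eta\,\|L^{\bullet}_\eta\|_\infty,\]
and Brownian scaling identifies this in distribution with $\eta^{3/2}\|L^{\bullet}_1\|_\infty$. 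Since $\|L^{\bullet}_1\|_\infty$ has Gaussian tails (a classical fact about 1D BM local time), dominated convergence gives $\mbf E[U^{(\theta)}_\eps(\eta)]\to1$ whenever $\eta\to 0$.

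For the matching Jensen lower bound, applied to the Brownian expectation with $\xi_\eps$ held fixed, I obtain
\[U^{(\theta)}_\eps(\eta)\geq L_t:=\exp\left(\theta\int_{\mbb R^d}\xi_\eps(y)H_\eta(y)\d y\right),\qquad H_\eta(y):=\int_0^\eta \ms G_s(y)\d s.\]
The exponent is a centered Gaussian in $\xi$ with variance $\|H_\eta\|_{R_\eps}^2\leq\|H_\eta\|_2^2$. A Fourier-side computation ($\widehat{H_\eta}(\zeta)=2|\zeta|^{-2}(1-\mr e^{-\eta|\zeta|^2/2})$ plus a rescaling) gives $\|H_\eta\|_2^2=C_d\,\eta^{(4-d)/2}$ for a finite constant $C_d$ in $d=1,2,3$. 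Since $\eta(t)\to 0$ under either hypothesis, this variance vanishes; hence $L_t\to 1$ in probability and $\mbf E[L_t]=\exp(\tfrac{\theta^2}{2}\|H_\eta\|_{R_\eps}^2)\to 1$.

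Putting the two halves together yields the sandwich: $0\leq U^{(\theta)}_\eps(\eta)-L_t$ with $\mbf E[U^{(\theta)}_\eps(\eta)-L_t]\to 0$ forces $U^{(\theta)}_\eps(\eta)-L_t\to 0$ in probability, and $L_t\to 1$ in probability closes the argument. The main obstacle is the $d=1$ case: the naive variance bound $\eta^2 R(0)\eps^{-d}$ fails to vanish under the sole assumption $\eta(t)=o(1)$, because $\eps(t)$ can decay arbitrarily fast in the singular phase. The local-time refinement, together with the exponential integrability of $\|L^{\bullet}_1\|_\infty$, is the essential ingredient that makes the argument dimension-free and gives the asymmetry between the conditions on $\eta(t)$ in $d=1$ versus $d=2,3$.
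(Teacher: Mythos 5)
Your argument is correct but takes a genuinely different route from the paper's. The paper uses the method of moments: it computes both $\mbf E[U^{(\theta)}_{\eps(t)}(\eta(t))]$ and $\mbf E[U^{(\theta)}_{\eps(t)}(\eta(t))^2]$ via the Fubini/Gaussian-MGF formula \eqref{Equation: Fubini Gaussian}--\eqref{Equation: Quenched Prelim 1 Moments}, shows both tend to $1$ (which forces $L^2$ convergence to $1$), and invokes the same dichotomy you identify between $d=1$ and $d=2,3$: the crude bound $R_{\eps}\leq R(0)\eps^{-d}$ for $d\geq2$, and a local-time estimate for $d=1$ (the paper bounds $\|L_{\eta}\|_2^2\deq \eta^{3/2}\|L_1\|_2^2$ via Young's inequality and uses exponential integrability of $\|L_1\|_2^2$ plus Vitali, whereas you bound by $\eta\|L^\bullet_\eta\|_\infty\deq\eta^{3/2}\|L^\bullet_1\|_\infty$ and use its Gaussian tails plus dominated convergence; both are valid). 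Your replacement for the second-moment computation is the Jensen lower bound $L_t=\exp(\theta\langle\xi_{\eps(t)},H_{\eta(t)}\rangle)$ with the explicit variance computation $\|H_\eta\|_{R_\eps}^2\leq\|H_\eta\|_2^2=C_d\,\eta^{(4-d)/2}\to0$, combined with the $L^1$ sandwich $0\leq U-L_t$, $\mbf E[U-L_t]\to0$. This is a clean alternative: it sidesteps the two-particle Brownian-motion computation entirely, at the cost of the Fourier calculation of $\|H_\eta\|_2^2$ (which is elementary and yields an explicit rate). Both arguments are comparably short; the paper's is slightly more self-contained in that it reuses the same Fubini formula for $n=1,2$, while yours is somewhat more transparent about why $\eta(t)\to0$ alone suffices for the lower half.

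One minor point worth tightening if you write this up: your claim ``$\|L^\bullet_1\|_\infty$ has Gaussian tails'' should come with a citation (e.g.\ classical results of Cs\'aki or the Ray--Knight description); the paper instead cites \cite[Theorem 4.2.1]{ChenBook} for exponential integrability of $\|L_1\|_2^2$. Also note that the sandwich step really needs $L_t\to1$ \emph{in probability} rather than just in expectation; you do state this, and it follows since the exponent is centered Gaussian with vanishing variance — just make sure that is spelled out.
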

\begin{proof}
Since the limit is constant it suffices to show convergence in distribution.
Moreover, since constants are determined by their moments,
it suffices to prove convergence of the first two moments. For $n=1,2$,
it follows from Fubini's theorem and \eqref{Equation: Total Mass} that
\begin{align}
\label{Equation: Fubini Gaussian}
\mbf E\left[U^{(\theta)}_{\eps(t)}\big(\eta(t)\big)^n\right]
=\mbf E\left[\mbf E_{\xi_{\eps(t)}}\left[\exp\left(\theta\sum_{i=1}^n\int_0^{\eta(t)} \xi_{\eps(t)}\big(B^i(s)\big)\d s\right)\right]\right],
\end{align}
where $(B^i)_{1\leq i\leq n}$ are i.i.d. standard Brownian motions started at zero,
and $\mbf E_{\xi(t)}$ denotes expectation with respect to $\xi_{\eps(t)}$ conditional
on the $B^i$.
Conditional on a fixed realization of the paths of $B^i$, the sum of integrals
\[\theta\sum_{i=1}^n\int_0^{\eta(t)} \xi_{\eps(t)}\big(B^i(s)\big)\d s\]
is Gaussian with mean zero and variance
\begin{align}
\label{Equation: Quenched Prelim 1 Moments}
\theta^2\sum_{i,j=1}^n\int_{[0,\eta(t)]^2} R_{\eps(t)}\big(B^i(u)-B^j(v)\big)\d u\dd v,\qquad n=1,2.
\end{align}

We begin with the proof in the case $d\in\{2,3\}$.
Since $R$ is a positive semidefinite function,
$R_{\eps(t)}\leq R_{\eps(t)}(0)=\eps(t)^{-d}R(0)$.
In particular, \eqref{Equation: Quenched Prelim 1 Moments}
is bounded above by $R(0)\theta^2n^2\eta(t)^2\eps(t)^{-d}$; hence
\[1\leq\mbf E\left[U^{(\theta)}_{\eps(t)}\big(\eta(t)\big)^n\right]
=\mr e^{O((\eta(t)/\eps(t)^{d/2})^2)}=\mr e^{o(1)}=1+o(1),\]
as desired.

We now settle the case $d=1$.
For every $1\leq i\leq n$, let $(L^i_t(x))_{t\geq 1,x\in\mbb R}$
denote the continuous version of the local time process of $B^i$
(e.g., \cite[Chapter VI]{RevuzYor}), so that
\[\int_{[0,\eta(t)]^2} R_{\eps(t)}\big(B^i(u)-B^j(v)\big)\d u\dd v=\int_{\mbb R^2} L^i_{\eta(t)}(x)R_{\eps(t)}(x-y)L^j_{\eta(t)}(y)\d x\dd y.\]
Since $R_\eta$ integrates to one for all $\eta>0$, it then follows from Young's convolution inequality that
the variance in \eqref{Equation: Quenched Prelim 1 Moments} is bounded above by
\begin{align}
\label{Equation: Quenched Prelim 1 Moments 1d}
\theta^2\sum_{i,j=1}^n\|L^i_{\eta(t)}\|_2\|L^j_{\eta(t)}\|_2\leq2\theta^2\sum_{i,j=1}^n\big(\|L^i_{\eta(t)}\|_2^2+\|L^j_{\eta(t)}\|_2^2\big).
\end{align}
By Brownian scaling, $\|L^i_{\eta(t)}\|_2^2\deq\eta(t)^{3/2}\|L^i_1\|_2^2$ for all $t\geq 1$
(e.g., \cite[(2.3.8) and Proposition 2.3.5 with $d=1$ and $p=2$]{ChenBook}). Thus,
\eqref{Equation: Quenched Prelim 1 Moments 1d} converges to
zero in probability. Given that $\|L^i_1\|_2^2$ have finite
exponential moments of all orders (e.g., \cite[Theorem 4.2.1 with $p=2$]{ChenBook}),
it follows from the Vitali convergence theorem that
\[\lim_{t\to\infty}\mbf E\left[\exp\left(\eta(t)^{3/2}\theta^2\sum_{i,j=1}^n\big(\|L^i_1\|_2^2+\|L^j_1\|_2^2\big)\right)\right]=1,\]
concluding the proof.
\end{proof}

\begin{proposition}
\label{Proposition: Quenched Prelim 2}
There exists a constant $C>0$
such that for every $\theta,t,r>0$,
\[\mbf E\left[\mr e^{t\La_1(A^{(\theta)}_{\eps(t)},Q_r)}\right]\leq
\begin{cases}
\frac{2r}{\sqrt{2\pi t}}\mr e^{C\theta^4t^3}&d=1
\vspace{5pt}\\
\frac{(2r)^d}{(2\pi t)^{d/2}}\mr e^{C\theta^2\eps(t)^{-d}t^2}&d=2,3.
\end{cases}\]
in both regular and singular phases.
\end{proposition}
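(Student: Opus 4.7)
The plan is to use the spectral trace formula to convert $\mr e^{t\La_1}$ into a Feynman--Kac expectation, then eliminate $\xi_{\eps(t)}$ by Gaussian averaging, reducing the problem to a dimension-dependent estimate on an exponential functional of Brownian bridge.

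Since every term in \eqref{Equation: Trace Formula} is nonnegative, $\mr e^{t\La_1(A^{(\theta)}_{\eps(t)},Q_r)}\leq\mr{Tr}[\mc T^{A^{(\theta)}_{\eps(t)},Q_r}_t]$. Expressing the trace via the Feynman--Kac kernel \eqref{Equation: Domain Feynman-Kac} evaluated on the diagonal (using $\ms G_t(0)=(2\pi t)^{-d/2}$), discarding the exit-time indicator $\mbf 1_{\{T_{Q_r}\geq t\}}$, taking expectation over $\xi_{\eps(t)}$, interchanging with the bridge expectation via Fubini, and exploiting the conditional Gaussianity of $\theta\int_0^t\xi_{\eps(t)}(B(s))\d s$ given $B$ (exactly as in the proof of Proposition \ref{Proposition: Quenched Prelim 1}) yields
\[
\mbf E\!\left[\mr e^{t\La_1(A^{(\theta)}_{\eps(t)},Q_r)}\right]\leq\frac{1}{(2\pi t)^{d/2}}\int_{Q_r}\mbf E^{x,x}_t\!\left[\exp\!\left(\tfrac{\theta^2}{2}I_{\eps(t)}(B)\right)\right]\d x,
\]
where $I_\eps(B):=\int_{[0,t]^2}R_\eps\big(B(u)-B(v)\big)\d u\d v$.

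For $d=2,3$, positive semidefiniteness of $R$ gives $R_\eps\leq R_\eps(0)=\eps^{-d}R(0)$, so $I_{\eps(t)}(B)\leq R(0)\eps(t)^{-d}t^2$ deterministically; together with $\vol(Q_r)=(2r)^d$ this gives the stated bound with $C=R(0)/2$. For $d=1$ the pointwise route fails since it would introduce an $\eps$-dependent factor, so one routes through local times: letting $L_t$ denote the continuous local time of the Brownian bridge on $[0,t]$, one has $I_{\eps(t)}(B)=\int L_t(x)R_{\eps(t)}(x-y)L_t(y)\d x\d y$, and Young's convolution inequality (using $\|R_\eps\|_1=1$ for every $\eps$) bounds this above by $\|L_t\|_2^2$ \emph{uniformly} in $\eps$. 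Brownian scaling gives $\|L_t\|_2^2\deq t^{3/2}\|L_1\|_2^2$ uniformly in the starting point $x\in Q_r$, and invoking a bridge variant of the quadratic-in-$\al$ exponential moment estimate $\mbf E[\exp(\al\|L_1\|_2^2)]\leq C_1\exp(C_2\al^2)$ (cf.\ \cite[Theorem 4.2.1 with $p=2$]{ChenBook}) with $\al=\theta^2 t^{3/2}/2$ produces an upper bound $C_1\exp(C\theta^4 t^3)$ on the bridge expectation; integrating over $Q_r$ completes the case $d=1$.

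The main obstacle is the $d=1$ step: the $\eps$-independence of the claimed bound precludes the pointwise dominance of $R_\eps$, so the argument must exploit the self-intersection local time representation together with a uniform (in $\al\geq 0$) quadratic-exponential moment bound for $\|L_1\|_2^2$. Verifying that such a bound transfers from standard Brownian motion to the Brownian bridge with constants independent of the starting point $x\in Q_r$ is the only delicate point; once this is in hand, the $d=2,3$ case reduces to a one-line pointwise estimate after the Gaussian averaging has been carried out.
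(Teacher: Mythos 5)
Your proposal matches the paper's own argument almost step for step: both use the trace formula to dominate $\mr e^{t\La_1}$ by $\mr{Tr}[\mc T^{A^{(\theta)}_{\eps(t)},Q_r}_t]$, express the trace via the Feynman--Kac kernel on the diagonal, discard the exit-time indicator, integrate out $\xi_{\eps(t)}$ via Fubini and Gaussianity, exploit the translation invariance of $\int_{[0,t]^2}R_{\eps(t)}(B(u)-B(v))\d u\dd v$ to replace the bridge from $x$ to $x$ by the bridge from $0$ to $0$, and then split by dimension. For $d=2,3$ the pointwise bound $R_{\eps(t)}\leq\eps(t)^{-d}R(0)$ closes the argument in both versions. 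For $d=1$ both pass to local times and Young's inequality to get $\mbf E^{0,0}_t[\exp(\theta^2\|L_t\|_2^2/2)]$.

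The one point at which you diverge is the final step in $d=1$. The paper cites \cite[Lemma 2.2]{ChenHSX} for $\log\mbf E^0[\mr e^{\th\|L_t\|_2^2}]=O(\th^2t^3)$ \emph{for free Brownian motion}, and then transfers from the bridge to free Brownian motion via the argument in \cite[Lemma 5.11]{GaudreauLamarre}, giving $\mbf E^{0,0}_t[\mr e^{\theta^2\|L_t\|_2^2/2}]=O(\mbf E^0[\mr e^{2\theta^2\|L_{t/2}\|_2^2}])$. You instead rescale by Brownian scaling to time $1$ and then invoke a ``bridge variant'' of the estimate $\mbf E[\exp(\al\|L_1\|_2^2)]\leq C_1\mr e^{C_2\al^2}$ from \cite[Theorem 4.2.1]{ChenBook}. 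That reference is stated for free Brownian motion, not for the bridge, and the transfer from free motion to the bridge is precisely the nontrivial step; you correctly identify this as the delicate point, but you leave it open, whereas the paper resolves it with a concrete citation and works at time $t$ rather than after rescaling. (The uniformity in the starting point $x\in Q_r$ that you flag as a worry is not actually an issue: the self-intersection local time is translation invariant, so the law of $\|L_t\|_2^2$ for the $x$-to-$x$ bridge does not depend on $x$; both you and the paper already used this when reducing to $\mbf E^{0,0}_t$.) Filling in the bridge-to-free-motion comparison, either along the lines of \cite{GaudreauLamarre} or by a direct absolute-continuity argument on $[0,t/2]$, would complete your proof.
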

\begin{proof}
Thanks to \eqref{Equation: Domain Feynman-Kac} and \eqref{Equation: Trace Formula},
we note that
\begin{multline*}
\mr e^{t\La_1(A^{(\theta)}_{\eps(t)},Q_r)}
\leq\sum_{k=1}^\infty \mr e^{t\La_k(A^{(\theta)}_{\eps(t)},Q_r)}\\
=\frac1{(2\pi t)^{d/2}}\int_{Q_r}
\mbf E^{x,x}_t\left[\exp\left(\theta\int_0^t \xi_{\eps(t)}\big(B(s)\big)\d s\right);T_{Q_r}\geq t\right]\d x.
\end{multline*}
Once again employing Fubini's theorem as in \eqref{Equation: Fubini Gaussian}
and \eqref{Equation: Quenched Prelim 1 Moments},
this yields
\begin{multline*}
\mbf E\left[\mr e^{t\La_1(A^{(\theta)}_{\eps(t)},Q_r)}\right]\\
\leq\frac1{(2\pi t)^{d/2}}\int_{Q_r}
\mbf E^{x,x}_t\left[\exp\left(\frac{\theta^2}{2}\int_{[0,t]^2} R_{\eps(t)}\big(B(u)-B(v)\big)\d u\dd v\right)\right]\d x.
\end{multline*}
Given that the functional $\int_{[0,t]^2} R_{\eps(t)}\big(B(u)-B(v)\big)\d u\dd v$
is invariant with respect to the starting point of $B$, we finally get the upper bound
\begin{align}
\label{Equation: Quenched Prelim 2 - 1}
\mbf E\left[\mr e^{t\La_1(A^{(\theta)}_{\eps(t)},Q_r)}\right]
\leq\frac{(2r)^d}{(2\pi t)^{d/2}}
\mbf E^{0,0}_t\left[\exp\left(\frac{\theta^2}{2}\int_{[0,t]^2} R_{\eps(t)}\big(B(u)-B(v)\big)\d u\dd v\right)\right].
\end{align}
In the case where $d=2,3$, the result then follows from
the trivial bound
\[\int_{[0,t]^2} R_{\eps(t)}\big(B(u)-B(u)\big)\d u\dd v\leq R(0)t^2\eps(t)^{-d}.\]

We now consider the case $d=1$.
Using the same local time estimates leading up
to \eqref{Equation: Quenched Prelim 1 Moments 1d},
we have the upper bound
\[\mbf E^{0,0}_t\left[\exp\left(\frac{\theta^2}{2}\int_{[0,t]^2} R_{\eps(t)}\big(B(u)-B(v)\big)\d u\dd v\right)\right]
\leq\mbf E^{0,0}_t\left[\mr e^{\theta^2\|L_t\|_2^2/2}\right].\]
According to \cite[Lemma 2.2 in the case $d=1$ and $R=\de_0$]{ChenHSX}, for every $\th>0$,
\[\log\mbf E^0\left[\mr e^{\th\|L_t\|_2^2}\right]=O(\th^2t^3)\qquad\text{as }t\to\infty.\]
Then, by arguing as in the last paragraph of the proof of \cite[Lemma 5.11]{GaudreauLamarre}
(see also \cite[(5.15) and (5.17)--(5.19)]{GaudreauLamarre}),
we have the bound
\[\mbf E^{0,0}_t\left[\mr e^{\theta^2\|L_t\|_2^2/2}\right]=O\left(\mbf E^0\left[\mr e^{2\theta^2\|L_{t/2}\|_2^2}\right]\right),\]
thus concluding the proof for $d=1$.
\end{proof}

\subsection{Upper Bounds for
\eqref{Equation: Main Regular TM}
and \eqref{Equation: Main Singular TM}}
\label{Section: Quenched Upper}

For every $k\in\mbb N$ and $t\geq0$, define
\begin{align}
\label{Equation: rk Radius}
r_k(t):=\begin{cases}
\left(t\eps(t)^{-d/2}\sqrt{\log t}\right)^k&\text{if $\eps(t)$ is in the regular phase,}
\vspace{5pt}\\
\left(t(\log t)^{2/(4-d)}\right)^k&\text{if $\eps(t)$ is in the singular phase.}
\end{cases}
\end{align}
It is clear that, for large enough $t$, $r_k(t)< r_{k+1}(t)$ for all $k\in\mbb N$.
Consequently, following \cite[(4.24)]{GartnerKonigMolchanov}
(see also \cite[Pages 596--597]{Chen14}),
we have the decomposition
\begin{multline}
\label{Equation: Quenched Hitting Time Decomposition}
U_{\eps(t)}(t)
=\mbf E^0\left[\exp\left(\int_0^t \xi_{\eps(t)}\big(B(s)\big)\d s\right);T_{Q_{r_1(t)}}\geq t\right]\\
+\sum_{k=1}^\infty\mbf E^0\left[\exp\left(\int_0^t \xi_{\eps(t)}\big(B(s)\big)\d s\right);T_{Q_{r_k(t)}}<t\leq T_{Q_{r_{k+1}(t)}}\right].
\end{multline}

We begin by controlling the first term on the right-hand side of
\eqref{Equation: Quenched Hitting Time Decomposition}.
For the remainder of Section \ref{Section: Quenched Upper},
let us fix a some small constant $\th>0$
(precisely how small will be determined later in this proof).
By applying \eqref{Equation: Quenched Upper Bound 1}
with $r=r_1(t)$ and $\eta=t^{-\th}$, and then following this up
by \eqref{Equation: Quenched Upper Bound 2} with $r=r_1(t)$, $\tilde t=t-t^{-\th}$,
and $\theta=p$,
we obtain the upper bound
\begin{multline}
\label{Equation: Quenched Upper Bound Model}
\mbf E^0\left[\exp\left(\int_0^t \xi_{\eps(t)}\big(B(s)\big)\d s\right);T_{Q_{r_1(t)}}\geq t\right]\\
\leq U^{(q)}_{\eps(t)}(t^{-\th})^{1/q}
\big(2\pi t^{-\th}\big)^{-d/2p}\big(2 r_1(t)\big)^{d/p}\mr e^{(t-t^{-\th})\La_1(A^{(p)}_{\eps(t)},Q_{r_1(t)})/p}
\end{multline}
for every $p,q>1$ such that $1/p+1/q=1$.
Since $\eps(t)\gg(\log t)^{-1/(4-d)-\mf c_d}\gg t^{-\de}$ for all $\de>0$
when $d=2,3$,
it follows from Proposition \ref{Proposition: Quenched Prelim 1} that
\[\lim_{t\to\infty}\log U^{(q)}_{\eps(t)}(t^{-\th})^{1/q}=0\qquad\text{in probability}.\]
By definition of $r_1(t)$, we have that
\[\lim_{t\to\infty}\frac{\log \big(2\pi t^{-\th}\big)^{-d/2p}}{t}=0
\qquad\text{and}\qquad
\lim_{t\to\infty}\frac{\log r_1(t)}{t}=0.\]
Finally, noting that $t-t^{-\th}=t\big(1+o(1)\big)$, and that
$r_1(t)$ is of the form \eqref{Equation: General Radius} with $\al=1$,
it follows from Theorem \ref{Theorem: Eigenvalue Technical} and
Remark \ref{Remark: General Scaling Function} that
\[\lim_{t\to\infty}\frac{\log\mr e^{(t-t^{-\th})\La_1(A^{(p)}_{\eps(t)},Q_{r_1(t)})/p}}{t\,\eps(t)^{-d/2}\sqrt{\log t}}
=\sqrt{2dR(0)}\qquad\text{in probability}\]
in the regular phase and
\[\lim_{t\to\infty}\frac{\log\mr e^{(t-t^{-\th})\La_1(A^{(p)}_{\eps(t)},Q_{r_1(t)})/p}}
{t(\log t)^{2/(4-d)}}=p^{4/(4-d)-1}\mf L_d\quad\text{in probability}\]
in the singular phase. Combining these limits with \eqref{Equation: Quenched Upper Bound Model}
and then taking $p\to1$,
we obtain the following statement:

\begin{proposition}
\label{Proposition: Quenched Upper Bound pre}
Every sequence of $t>0$ such that $t\to\infty$ has a subsequence $(t_n)_{n\in\mbb N}$
along which the following almost-sure limits hold:
\[\limsup_{n\to\infty}\frac{\log\mbf E^0\left[\exp\left(\int_0^{t_n} \xi_{\eps(t_n)}\big(B(s)\big)\d s\right);T_{Q_{r_1(t_n)}}\geq t_n\right]}{t_n\,\eps(t_n)^{-d/2}\sqrt{\log t_n}}\leq\sqrt{2dR(0)}\]
in the regular phase, and
\[\limsup_{n\to\infty}\frac{\log\mbf E^0\left[\exp\left(\int_0^{t_n} \xi_{\eps(t_n)}\big(B(s)\big)\d s\right);T_{Q_{r_1(t_n)}}\geq t_n\right]}{t_n(\log t_n)^{2/(4-d)}}\leq\mf L_d\]
in the singular phase.
\end{proposition}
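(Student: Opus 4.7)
The plan is to combine the technical inequality \eqref{Equation: Quenched Upper Bound Model} with the three convergence statements identified in the discussion preceding the proposition, and then pass to the limit $p \to 1^+$ along a carefully chosen subsequence. Fix a small parameter $\th > 0$ and Hölder conjugates $p, q > 1$. Applying Proposition \ref{Proposition: Quenched Upper Bound Technical} with $r = r_1(t)$ and $\eta = t^{-\th}$ factorizes the expectation of interest as the product of a prefactor $U^{(q)}_{\eps(t)}(t^{-\th})^{1/q}$, a polynomial factor $(2\pi t^{-\th})^{-d/(2p)}(2r_1(t))^{d/p}$, and the exponential $\exp\bigl((t-t^{-\th})\La_1(A^{(p)}_{\eps(t)},Q_{r_1(t)})/p\bigr)$.

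First I would verify that the prefactor is negligible, i.e.\ $\log U^{(q)}_{\eps(t)}(t^{-\th})^{1/q} \to 0$ in probability. This follows from Proposition \ref{Proposition: Quenched Prelim 1} once we check that $\eta(t) = t^{-\th}$ satisfies \eqref{Equation: Quenched Prelim 1}, which holds in both phases because $\eps(t)$ decays at most polylogarithmically in $t$ (recall in particular the constraint $\eps(t) \gg (\log t)^{-1/(4-d)-\mf c_d}$ in the singular phase for $d=2,3$), so $t^{-\th} \ll \eps(t)^{d/2}$ for any $\th > 0$. The polynomial factor contributes at most $O(\log t)$ to the logarithm, which is negligible compared to either normalizer $t\,\eps(t)^{-d/2}\sqrt{\log t}$ or $t(\log t)^{2/(4-d)}$.

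The heart of the argument is the exponential term, which I would handle via Theorem \ref{Theorem: Eigenvalue Technical} applied with $w(t) = 0$, $\theta = 1$, $\al = 1$, $\be = 1$, and $\si = p$. Since $r_1(t)$ matches the form \eqref{Equation: General Radius} with $\al = 1$, Remark \ref{Remark: General Scaling Function} yields $\sqrt{\log r_1(t)} = \sqrt{\log t}\,(1+o(1))$ and $(\log r_1(t))^{2/(4-d)} = (\log t)^{2/(4-d)}\,(1+o(1))$. Combined with $t - t^{-\th} = t(1+o(1))$, this produces, in probability,
\[
\frac{(t-t^{-\th})\La_1(A^{(p)}_{\eps(t)},Q_{r_1(t)})/p}{t\,\eps(t)^{-d/2}\sqrt{\log t}} \longrightarrow \sqrt{2dR(0)}
\]
in the regular phase and the analogous limit with value $p^{4/(4-d)-1}\mf L_d$ in the singular phase (the factor $p$ in the numerator cancels one power of $\si = p$ coming from $\La_1$). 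Adding the three logarithmic contributions gives the desired upper bound for each fixed $p > 1$.

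Finally, since all of the above convergences are only in probability, I would extract a single almost-sure subsequence by a standard diagonalization. Given an arbitrary sequence $t \to \infty$, choose $p_m = 1 + 1/m$ and extract nested subsequences along which the three convergences hold almost surely for every $p_m$ simultaneously; then take the diagonal subsequence $(t_n)$. Sending $m \to \infty$ drives $p_m \to 1$, and continuity of $p \mapsto p^{4/(4-d)-1}$ at $p=1$ sends the singular-phase constant to $\mf L_d$. I expect the only mild obstacle to be organizing the diagonal extraction so that one subsequence works uniformly across all $p_m$; apart from this bookkeeping, the argument is a direct assembly of pieces already proved.
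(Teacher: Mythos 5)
Your proposal is correct and follows essentially the same route as the paper: it applies \eqref{Equation: Quenched Upper Bound 1}--\eqref{Equation: Quenched Upper Bound 2} with $r=r_1(t)$ and $\eta=t^{-\th}$ to obtain \eqref{Equation: Quenched Upper Bound Model}, kills the prefactor via Proposition \ref{Proposition: Quenched Prelim 1}, discards the polynomial factor, invokes Theorem \ref{Theorem: Eigenvalue Technical} (with $\si=p$, $\al=1$, $\be=1$) together with Remark \ref{Remark: General Scaling Function}, and then sends $p\to1^+$ along a diagonal subsequence. The only difference is that you spell out the diagonalization over $p_m=1+1/m$ explicitly, which the paper leaves implicit.
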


With Proposition \ref{Proposition: Quenched Upper Bound pre}
in hand, in order to complete the proof of the upper bounds for
\eqref{Equation: Main Regular TM} and \eqref{Equation: Main Singular TM},
it is enough to show that the sum on the second line of
\eqref{Equation: Quenched Hitting Time Decomposition} converges to zero in probability.
By a straightforward application of H\"older's inequality, this sum is bounded above by
\begin{multline}
\label{Equation: Quenched Sum Terms}
\sum_{k=1}^\infty\Bigg(\mbf P\left[\sup_{s\leq t}|B(s)|_{\infty}>r_k(t)\bigg|B(0)=0\right]^{1/2}\\
\cdot\mbf E^0\left[\exp\left(2\int_0^t \xi_{\eps(t)}\big(B(s)\big)\d s\right);T_{Q_{r_{k+1}(t)}}\geq t\right]^{1/2}\Bigg).
\end{multline}
Since Brownian motion suprema have sub-Gaussian tails, there exists
a $c>0$ independent of $t$ and $k$ such that, for large enough $t\geq 1$,
\[\mbf P\left[\sup_{s\leq t}|B(s)|_{\infty}>r_k(t)\bigg|B(0)=0\right]^{1/2}
\leq\mr e^{-cr_k(t)^2/t}.\]
Combining this with the upper bound used in \eqref{Equation: Quenched Upper Bound Model},
but replacing $r_1(t)$ by $r_{k+1}(t)$ and $\xi_{\eps(t)}$ by $2\xi_{\eps(t)}$, we then obtain that \eqref{Equation: Quenched Sum Terms}
is bounded above by
\begin{align}
\label{Equation: Quenched Sum Terms 2}
U^{(2q)}_{\eps(t)}(t^{-\th})^{1/2q}
\sum_{k=1}^\infty\big(2\pi t^{-\th}\big)^{-d/4p}\big(2 r_{k+1}(t)\big)^{d/2p}\mr e^{(t-t^{-\th})\La_1(
A^{(2p)}_{\eps(t)},Q_{r_{k+1}(t)})/2p-cr_k(t)^2/t}
\end{align}
for any $p,q>1$ such that $1/p+1/q=1$.
By Proposition \ref{Proposition: Quenched Prelim 1},
it suffices to prove that the sum in \eqref{Equation: Quenched Sum Terms 2}
converges to zero in probability. We analyze
the terms $k=1$ and $k\geq2$ in this sum separately.

For the term $k=1$,
we note that $r_2(t)$ is of the
form \eqref{Equation: General Radius} with exponent $\al=2$.
Thus, by Theorem \ref{Theorem: Eigenvalue Technical}, there exists a random $\bar c>0$
independent of $t$ such that
for any sparse enough diverging sequence $(t_n)_{n\in\mbb N}$,
we have that
\begin{multline*}
\frac{(t_n-t_n^{-\th})\La_1(A^{(2p)}_{\eps(t_n)},Q_{r_{2}(t_n)})}{2p}-\frac{cr_1(t_n)^2}{t_n}\\
\leq\begin{cases}
t_n\big(\bar c\eps(t_n)^{-d/2}\sqrt{\log t_n}
-c\eps(t_n)^{-d}\log t_n\big)&\text{if $\eps(t)$ is in the regular phase,}
\vspace{5pt}\\
t_n\big(\bar c(\log t_n)^{2/(4-d)}
-c(\log t_n)^{4/(4-d)}\big)&\text{if $\eps(t)$ is in the singular phase.}
\end{cases}
\end{multline*}
In particular, for every $\ka>0$,
\begin{multline*}
\big(2\pi t_n^{-\th}\big)^{-d/4p}\big(2 r_{2}(t_n)\big)^{d/2p}\mr e^{(t_n-t_n^{-\th})\La_1(
A^{(2p)}_{\eps(t_n)},Q_{r_{2}(t_n)})/2p-cr_1(t_n)^2/t_n}\\
=O\left(t_n^{\th d/4p}r_2(t_n)^{d/2p}\mr e^{-\ka t_n}\right)
\end{multline*}
almost surely as $n\to\infty$. By definition of $r_2(t)$,
this vanishes as $n\to\infty$, and thus the $k=1$ term in
the sum in \eqref{Equation: Quenched Sum Terms 2} converges
to zero in probability as $t\to\infty$.

We now deal with the terms $k\geq2$. By Proposition \ref{Proposition: Quenched Prelim 2}
(and $t-t^{-\th}\leq t$), there exists a constant $C>0$ such that
\begin{multline}
\label{Equation: Quenched Moment-Sum Asymptotic}
\mbf E\left[\sum_{k=2}^\infty\big(2\pi t^{-\th}\big)^{-d/4p}\big(2 r_{k+1}(t)\big)^{d/2p}\mr e^{(t-t^{-\th})\La_1(
A^{(2p)}_{\eps(t)},Q_{r_{k+1}(t)})/2p-cr_k(t)^2/t}\right]\\
=\begin{cases}
O\bigg(\frac{t^{\th/4p}}{t^{1/2}}\sum_{k=2}^\infty r_{k+1}(t)^{1+1/2p}\mr e^{Ct^3-cr_k(t)^2/t}\bigg)&\text{if }d=1,
\vspace{5pt}\\
O\bigg(\frac{t^{d\th/4p}}{t^{d/2}}\sum_{k=2}^\infty r_{k+1}(t)^{d+d/2p}\mr e^{C\eps(t)^{-d}t^2-cr_k(t)^2/t}\bigg)&\text{if }d=2,3.
\end{cases}
\end{multline}
We begin by controlling the right-hand side of \eqref{Equation: Quenched Moment-Sum Asymptotic}
in the case $d=1$. We note that for every $\ka_0>0$, if $t>0$ is large enough, then
$-cr_k(t)^2/t\leq-\ka_0t^{2k-1}$ for every $k\in\mbb N$. Given that $Ct^3\leq Ct^{2k-1}$
for all $t\geq1$ and $k\geq2$, for every $\ka>0$, we have that
\[\sum_{k=2}^\infty r_{k+1}(t)^{1+1/2p}\mr e^{Ct^3-cr_k(t)^2/t}
\leq\sum_{k=2}^\infty r_{k+1}(t)^{1+1/2p}\mr e^{-\ka t^{2k-1}}\]
for large enough $t$. As $r_k(t)=O(t^{\nu k})$ for some $\nu>0$
independent of $t$, this sum is uniformly bounded in $t\gg1$.
Thus, so long as we choose $\th>0$ small enough
relative to $p>1$ so that $t^{\th/4p}=o(t^{1/2})$,
we get that \eqref{Equation: Quenched Moment-Sum Asymptotic}
vanishes as $t\to0$ for $d=1$.
For $d=2,3$,
we use the same argument, noting that, since $\eps(t)\gg(\log t)^{-1/(4-d)-\mf c_d}\gg t^{-\de}$ for all $\de>0$,
$\eps(t)^{-d}t^2=O(t^3)$. In summary,
the contribution of the terms $k\geq2$ to the sum in \eqref{Equation: Quenched Sum Terms 2}
converges to zero in probability, which finally concludes the proof of the following statement:
Every sequence of $t>0$ such that $t\to\infty$ has a subsequence $(t_n)_{n\in\mbb N}$
along which
\begin{align}
\label{Equation: TM Upper Bound Final}
U_{\eps(t_n)}(t_n)\leq
\begin{cases}
\sqrt{2d R(0)}\,t_n\,\eps(t_n)^{-d/2}\sqrt{\log t_n}\big(1+o(1)\big)&\text{regular phase,}
\vspace{5pt}\\
\mf L_d\,t_n\,(\log t_n)^{2/(4-d)}\big(1+o(1)\big)&\text{singular phase.}
\end{cases}
\end{align}
almost surely as $n\to\infty$, providing upper bounds for
\eqref{Equation: Main Regular TM} and \eqref{Equation: Main Singular TM}.

\subsection{Lower Bounds for
\eqref{Equation: Main Regular TM}
and \eqref{Equation: Main Singular TM}}
\label{Section: Quenched Lower}

We now conclude the proofs of
\eqref{Equation: Main Regular TM} and \eqref{Equation: Main Singular TM}
by providing matching lower bounds to \eqref{Equation: TM Upper Bound Final}.
Let us fix some $p,q>1$ such that $1/p+1/q=1$
and some $0<\th<1$.
By \eqref{Equation: Quenched Lower Bound 1} with $\eta=r=t^\th$, we get
\begin{multline}
\label{Equation: Quenched Lower Bound Decomposition}
U_{\eps(t)}(t)\geq U^{(-q/p)}_{\eps(t)}(t^\th)^{-p/q}\\
\cdot\left(\int_{Q_{t^\th}}\ms G_{t^\th}(x)\,
\mbf E^x\left[\exp\left(\frac 1p\int_0^{t-t^\th} \xi_{\eps(t)}\big(B(s)\big)\d s\right);T_{Q_{t^\th}}\geq t-t^\th\right]\d x\right)^p.
\end{multline}
Firstly, we note that $-\xi_{\eps(t)}$ is equal in distribution to $\xi_{\eps(t)}$,
and thus the asymptotics of $U^{(-q/p)}_{\eps(t)}(t^\th)^{-p/q}$
are the same as that of $U^{(q/p)}_{\eps(t)}(t^\th)^{-p/q}$.
Secondly, it is easy to see that $\eps(t)$ is in the regular (resp. singular)
phase if and only of $\eps(t^{1/\th})$ is in the regular (resp. singular) phase
for every $\th>0$. Consequently, it follows from \eqref{Equation: TM Upper Bound Final}
that if the diverging sequence $(t_n)_{n\in\mbb N}$ is sufficiently sparse, then
\[\frac{\log U^{(-q/p)}_{\eps(t_n)}(t_n^\th)^{-p/q}}{t_n}=
\begin{cases}
O\left(t_n^{\th-1}\eps(t_n)^{-d/2}\sqrt{\log t_n}\right)&\text{in the regular phase}
\vspace{5pt}\\
O\left(t_n^{\th-1}(\log t_n)^{2/(4-d)}\right)&\text{in the singular phase}
\end{cases}\]
almost surely as $n\to\infty$. Since $\th<1$ this vanishes for large $n$.

We now analyze the term on the second line of \eqref{Equation: Quenched Lower Bound Decomposition}.
It is easy to see that there exists a constant $c>0$
such that $\ms G_{t^\th}(x)\geq c\mr e^{-ct^\th}$
for every $x\in(-t^\th,t^\th)^d$. Thus,
an application of \eqref{Equation: Quenched Lower Bound 2}
with $\tilde t=t-t^\th$, $\eta=r=t^\th$, and $\theta=1/p$ yields that
the term on the second line of \eqref{Equation: Quenched Lower Bound Decomposition}
is bounded below by the quantity
\begin{multline*}
\mc F_{p,q,\th}(t):=c\mr e^{-cpt^\th}(2\pi)^{p^2d/2}t^{dp\th /2}(t-t^\th)^{p^2d/2q}(2t^\th)^{-2p^2/q}\\
\cdot\mr e^{-t^\th(p^2/q)\La_1(A^{(q/p^2)}_{\eps(t)},Q_{t^\th})}\mr e^{p^2t\La_1(A^{(1/p^2)}_{\eps(t)},Q_{t^\th})}
\end{multline*}
Since $t^\th$ is of the form \eqref{Equation: General Radius}
with exponent $\al=\th$, we conclude from Theorem \ref{Theorem: Eigenvalue Technical}
and Remark \ref{Remark: General Scaling Function} that
\[\lim_{t\to\infty}\frac{\log \mc F_{p,q,\th}(t)}{t\,\eps(t)^{-d/2}\sqrt{\log t}}=\sqrt{2dR(0)\th}
\qquad\text{in probability}\]
in the regular phase and
\[\lim_{t\to\infty}\frac{\log \mc F_{p,q,\th}(t)}{t\,(\log t)^{2/(4-d)}}
=p^{2-8/(4-d)}\th^{2/(4-d)}\mf L_d
\qquad\text{in probability}\]
in the singular phase. By taking $p,\th\to1$, this yields
a lower bound for
\eqref{Equation: Main Regular TM} and \eqref{Equation: Main Singular TM},
thus concluding the proof of Theorems
\ref{Theorem: Main Regular} and \ref{Theorem: Main Singular}.

\section{Annealed Total Mass}
\label{Section: Annealed Proof}

We now prove Theorem \ref{Theorem: Annealed}. Our main tool in establishing
this is the following moment formula,
which is proved using the same Fubini computation as in \eqref{Equation: Fubini Gaussian}:
For every $p\in\mbb N$,
\begin{align}
\label{Equation: Annealed Expression}
\mbf E\big[U_{\eps(t)}(t)^p\big]=\mbf E\left[\exp\left(\frac12\sum_{i,j=1}^p\int_{[0,t]^2}
R_{\eps(t)}\big(B^i(u)-B^j(v)\big)\d u\dd v\right)\right],
\end{align}
where $(B_i)_{1\leq i\leq p}$ are i.i.d. standard Brownian motions on $\mbb R^d$ started at zero.

\subsection{Proof of \eqref{Equation: Regular Annealed Total Mass}}

Suppose that $d\geq2$, or $d=1$ and $\eps(t)^{-1}=o(t)$.
Informally speaking, the statement of \eqref{Equation: Regular Annealed Total Mass}
is that, under these assumptions, the only meaningful contribution of
\eqref{Equation: Annealed Expression} in the large $t$ limit comes from
paths of the Brownian motions that are confined to a neighborhood of the origin,
whereby
\[\int_{[0,t]^2}R_{\eps(t)}\big(B^i(u)-B^j(v)\big)\approx t^2R_{\eps(t)}(0)=\eps(t)^{-d}t^2R(0).\]
Given that $R\leq R(0)$ by virtue of being a covariance function, an upper bound to that effect is trivial:
\[\int_{[0,t]^2}R_{\eps(t)}\big(B^i(u)-B^j(v)\big)\d u\dd v\leq \eps(t)^{-d}t^2R(0)\]
for every $1\leq i,j\leq p$,
which immediately yields an upper bound for \eqref{Equation: Regular Annealed Total Mass}.
To prove a matching lower bound, we now argue that confining the paths of the Brownian
motions near zero when $d\geq2$, or $d=1$ and $\eps(t)^{-1}=o(t)$ yields a vanishing
error.

Let $\ka>0$ be fixed. If $|B^i(s)|_\infty\leq\ka\eps(t)$ for every $1\leq i\leq p$ and $0\leq s\leq t$, then
\[\sum_{i,j=1}^p\int_{[0,t]^2}R_{\eps(t)}\big(B^i(u)-B^j(v)\big)\d u\dd v\geq\eps(t)^{-d}t^2p^2\inf_{|x|_\infty,|y|_\infty\leq\ka}R(x-y).\]
Given that $\eps(t)$ is bounded above by $1$, $\eps(t)/\sqrt{t}\to0$ as $t\to\infty$,
and thus there exists a constant $C>0$ independent of $t$ such that for each $1\leq i\leq p$,
\begin{align}
\label{Equation: Brownian Suprema Ball}
\mbf P\left[\sup_{0\leq s\leq t}|B^i(s)|_\infty\leq\ka\eps(t)\right]\geq\mr e^{-C t/\ka^2\eps(t)^2}
\end{align}
for large $t>0$
(e.g., \cite[(1.3), Page 535]{LiChao}).
Thanks to \eqref{Equation: Annealed Expression}, we have the inequality
\begin{multline}
\label{Equation: Events Lower Bound}
\mbf E\big[U_{\eps(t)}(t)^p\big]
\geq\mbf E\Bigg[\exp\left(\frac{1}2\sum_{i,j=1}^p\int_{[0,t]^2}R_{\eps(t)}\big(B^i(u)-B^j(v)\big)\d u\dd v\right)\\
\cdot\prod_{i=1}^p\mbf 1_{\{\sup_{0\leq s\leq t}|B^i(s)|_\infty\leq\ka\eps(t)\}}\Bigg];
\end{multline}
consequently,
\begin{align}
\label{Equation: Subcritical Annealed Lower Bound Precursor}
\liminf_{t\to\infty}\frac{\log\mbf E\big[U_{\eps(t)}(t)^p\big]}{\eps(t)^{-d}t^2}\geq\frac{p^2}2\inf_{|x|_\infty,|y|_\infty\leq\ka}R(x-y)
+\liminf_{t\to\infty}
\frac{-C p\eps(t)^{d-2}}{\ka^2t}.
\end{align}
The liminf on the right-hand side of \eqref{Equation: Subcritical Annealed Lower Bound Precursor} vanishes
whenever $d\geq2$ (since $\eps(t)\leq1$), or $d=1$ and $\eps(t)^{-1}=o(t)$.
Given that \eqref{Equation: Subcritical Annealed Lower Bound Precursor} holds for arbitrarily small $\ka>0$,
we thus obtain a lower bound for the limit \eqref{Equation: Regular Annealed Total Mass}
by taking $\ka\to0$.

\subsection{Proof of \eqref{Equation: Singular Annealed Total Mass}}

Let us henceforth assume that $d=1$ with $\eps(t)\ll t^{-1}$.
As per \eqref{Equation: Subcritical Annealed Lower Bound Precursor},
once $\eps(t)$ passes the threshold of $t^{-1}$ in one dimension,
the space scaling of $R_{\eps(t)}$ vanishes too quickly
for only Brownian paths confined to zero to have a contribution.
In the case of the upper bound, we can simply take the $\eps\to0$
limit in \eqref{Equation: Annealed Expression}, whereby the asymptotics
of the exponential moment reduce to the large deviations of Brownian
motion local time. More specifically:
Arguing as in
\eqref{Equation: Quenched Prelim 1 Moments 1d}, we see that
\[\sum_{1\leq i,j\leq p}\int_{[0,t]^2}R_{\eps(t)}\big(B^i(u)-B^j(v)\big)\d u\dd v
\leq2\sum_{i,j=1}^p\big(\|L^i_t\|_2^2+\|L^j_t\|_2^2\big)=4p\sum_{i=1}^p\|L_t^i\|_2^2.\]
By independence of the Brownian motions $B^i$ and \eqref{Equation: Annealed Expression}, this means that
\[\mbf E\big[U_{\eps(t)}(t)^p\big]\leq\mbf E^0\left[\mr e^{4p\|L_t\|_2^2}\right]^p,\]
where $L_t$ denotes the local time of some Brownian motion $B$, under the expectation $\mbf E^0$.
According to \cite[Lemma 2.2 in the case $d=1$ and $R=\de_0$]{ChenHSX}, there exists
a constant $C>0$ such that for every $\th>0$,
\[\lim_{t\to\infty}\frac{\log\mbf E^0\left[\mr e^{\th\|L_t\|_2^2}\right]^p}{t^3}=Cp\th^2,\]
from which we immediately obtain the upper bound in \eqref{Equation: Singular Annealed Total Mass}.

It now remains to show that there is a lower bound of the same order (i.e.,
$p^3t^3$).
The argument that we use for this is inspired by the proof of \cite[(6.8)]{Tindeletal}.
That is, we introduce an additional smoothing of the noise, which allows to simultaneously
provide a lower bound for the moment \eqref{Equation: Annealed Expression} and capture the optimal
range of the Brownian paths that contribute to the Annealed asymptotics; once this is done
the argument follows as in \eqref{Equation: Subcritical Annealed Lower Bound Precursor}. For this purpose,
we begin with the following Lemma, in which we introduce the additional smoothing:

\begin{lemma}
For every $\eta>0$, it holds that
\begin{multline}
\label{Equation: ???}
\sum_{i,j=1}^p\int_{[0,t]^2}R_{\eps(t)}\big(B^i(u)-B^j(v)\big)\d u\dd v\\\geq
\sum_{i,j=1}^p\int_{[0,t]^2}(R_{\eps(t)}*\ms G_{\eta})\big(B^i(u)-B^j(v)\big)\d u\dd v,
\end{multline}
where we recall that $\ms G_t$ denotes the Gaussian kernel defined
in \eqref{Equation: Gaussian Kernel}.
\end{lemma}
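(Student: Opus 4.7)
The plan is to view \eqref{Equation: ???} as a positive-definiteness statement for the difference kernel $K:=R_{\eps(t)} - R_{\eps(t)}*\ms G_\eta$, and prove it pathwise via Fourier analysis. First I would observe that, since $R=\bar R*\bar R$ with $\bar R$ even and compactly supported, one has $\hat R_{\eps(t)}=|\hat{\bar R}_{\eps(t)}|^2\geq 0$; and since $\ms G_\eta$ is a centered Gaussian density, $\hat{\ms G}_\eta(\la)=\mr e^{-\eta\la^2/2}\in[0,1]$. Hence
\[
\hat K(\la) = \hat R_{\eps(t)}(\la)\bigl(1 - \mr e^{-\eta\la^2/2}\bigr)\geq 0,
\]
and moreover $\hat K\in L^1(\mbb R)$, since $\bar R_{\eps(t)}\in L^1\cap L^2$ (by the boundedness and compact support of $\bar R$) yields $\hat{\bar R}_{\eps(t)}\in L^\infty\cap L^2$, so that $\hat R_{\eps(t)}=|\hat{\bar R}_{\eps(t)}|^2\in L^1(\mbb R)$.

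Next I would introduce the pathwise bounded complex-valued function
\[
\Phi_t(\la) := \sum_{i=1}^p \int_0^t \mr e^{-i\la B^i(s)}\d s,\qquad |\Phi_t(\la)|\leq pt,
\]
and substitute the Fourier inversion formula $K(z)=(2\pi)^{-1}\int_{\mbb R} \hat K(\la)\,\mr e^{i\la z}\d\la$ into the difference of the two sides of \eqref{Equation: ???}. Interchanging the order of integration, which is justified by $\hat K\in L^1(\mbb R)$ together with the uniform bound $|\Phi_t|\leq pt$, would then produce the pathwise identity
\[
\sum_{i,j=1}^p \int_{[0,t]^2} K\bigl(B^i(u)-B^j(v)\bigr)\d u\dd v \;=\; \frac{1}{2\pi}\int_{\mbb R} \hat K(\la)\,|\Phi_t(\la)|^2\d\la \;\geq\; 0,
\]
which is exactly \eqref{Equation: ???} once the term involving $R_{\eps(t)}*\ms G_\eta$ is moved to the right-hand side.

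I do not anticipate any serious obstacle: the statement is essentially the general fact that the difference of a positive-definite kernel and its convolution with a probability density whose Fourier transform is bounded by $1$ remains positive-definite, and the only point requiring care is the Fubini verification above, which is immediate from the integrability bounds just established.
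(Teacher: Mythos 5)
Your proof is correct and is essentially the same Fourier-analytic argument as the paper's: the key facts used are $\hat{R}_{\eps(t)}=|\hat{\bar R}_{\eps(t)}|^2\geq 0$ and $0\leq\hat{\ms G_\eta}\leq 1$. The only cosmetic difference is that the paper first rewrites both double time-integrals via Brownian local times and then applies Parseval to the summed local time $\mbf L_t=\sum_i L^i_t$, whereas you go directly to the quantity $\Phi_t(\la)=\sum_i\int_0^t e^{-i\la B^i(s)}\,\mr ds$ and justify the Fubini interchange by the explicit $L^1$ bound on $\hat K$; by the occupation time formula $\hat{\mbf L_t}$ is exactly your $\Phi_t$, so these are the same object and the two write-ups coincide.
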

\begin{proof}
Recall that we can write
\[\int_{[0,t]^2}f\big(B^i(u)-B^j(v)\big)\d u\dd v=\int_{\mbb R^2}L^i_t(x)f(x-y)L^j_t(y)\d x\dd y\]
for any measurable $f:\mbb R\to\mbb R$, where $L^i$ denotes the local time process
of $B^i$. Thus, if we denote $\mbf L_t(x):=\sum_{i=1}^pL^i_t(x)$, then we have that
\begin{align}
\label{Equation: Sum of Local Times}
\sum_{1\leq i,j\leq p}\int_{[0,t]^2}f\big(B^i(u)-B^j(v)\big)\d u\dd v=\int_{\mbb R^2}\mbf L_t(x)f(x-y)\mbf L_t(y)\d x\dd y.
\end{align}
Letting $\hat \cdot$ denote the Fourier transform,
it follows from the Parseval formula that
\begin{align}
\label{Equation: Fourier Equation}
\int_{\mbb R^2}\mbf L_t(x)f(x-y)\mbf L_t(y)\d x\dd y
=\int_{\mbb R}|\hat{\mbf L_t}(x)|^2\hat f(x)\d x.
\end{align}
For every $\eta>0$, $\bar R_\eta$ and $\ms G_\eta$ are both even functions,
and $R_\eta=\bar R_\eta*\bar R_\eta$ and $\ms G_\eta=\ms G_{\eta/2}*\ms G_{\eta/2}$.
In particular, $\hat{R_\eta}$ and $\hat{\ms G_\eta}$ are both nonnegative.
Given that
\[\hat{\ms G_\eta}\leq\|\ms G_\eta\|_1=1\]
the result then follows from the applying the inequality
\[\hat{R_{\eps(t)}*\ms G_\eta}=\hat{R_{\eps(t)}}\cdot\hat{\ms G_\eta}\leq\hat{R_{\eps(t)}}.\]
to \eqref{Equation: Fourier Equation} with
$f=R_{\eps(t)}*\ms G_\eta$.
\end{proof}

Let $\ka>0$ be large enough so that $\mr{supp}(R_{\eps(t)})\subset[-\ka\eps(t),\ka\eps(t)]$
for every $t\geq 0$; in particular, for every $\tilde\ka,\eta>0$, we have that
\[\inf_{|x|\leq\tilde\ka}(R_{\eps(t)}*\ms G_\eta)(x)\geq\inf_{|x|\leq\tilde\ka+\ka\eps(t)}\ms G_\eta(x)=\frac{\mr e^{-(\tilde\ka+\ka\eps(t))^2/2\eta}}{\sqrt{2\pi\eta}}.\]
Let us define a function $\eta(t)$ that vanishes as $t\to0$ in such a way that $\eps(t)=o(\eta(t)^{1/2})$
(we define $\eta(t)$ more specifically in a moment; this function is meant to capture
the optimal range of Brownian paths that contribute to $\mbf E[U(t)^p]$).
If $|B^i(s)|\leq\eta(t)^{1/2}$ for every $0\leq s\leq t$ and $0\leq i\leq p$,
then we have the inequality
\begin{align}
\label{Equation: Gaussian Convolution Lower Bound}
\sum_{i,j=1}^p\int_{[0,t]^2}(R_{\eps(t)}*\ms G_{\eta(t)})\big(B^i(u)-B^j(v)\big)\d u\dd v
\geq p^2t^2\frac{\mr e^{-(2\eta(t)^{1/2}+\ka\eps(t))^2/2\eta(t)}}{\sqrt{2\pi\eta(t)}}.
\end{align}
Since $\eps(t)^2=o(\eta(t))$, there exists some $c>0$ such that
\eqref{Equation: Gaussian Convolution Lower Bound} is bounded below by
$cp^2t/\eta(t)^{1/2}$
for large enough $t$. Using essentially the same estimates as \eqref{Equation: Brownian Suprema Ball}
and \eqref{Equation: Events Lower Bound}, we therefore conclude that
\[\liminf_{t\to\infty}\frac{\log\mbf E\big[U_{\eps(t)}(t)^p\big]}{t^3}\geq
\liminf_{t\to\infty}\frac{1}{t^3}\left(\th_1\frac{p^2t^2}{\eta(t)^{1/2}}-\th_2\frac{pt}{\eta(t)}\right)\]
for some $\th_1,\th_2>0$ independent of $p$ and $t\geq 1$. If we take
\[\eta(t)=\frac{\bar\ka\th_2^2}{\th_1^2p^2t^2}\]
for some fixed $\bar\ka>1$ (which statisfies $\eps(t)=o(\eta(t)^{1/2})$ thanks to our assumption that $\eps(t)=o(t^{-1})$),
we get that
\[\frac{1}{t^3}\left(\th_1\frac{p^2t^2}{\eta(t)^{1/2}}-\th_2\frac{pt}{\eta(t)}\right)=\frac{\th_1^2 \left(\sqrt{\bar\ka}-1\right)}{\th_2 \bar\ka}p^3,\]
which yields the lower bound in \eqref{Equation: Singular Annealed Total Mass}.

\appendix
\section{Appendix}

\subsection{Gaussian Maxima Upper Tails}

\begin{lemma}[e.g.,
{\cite[Theorem 5.4.3 and Corollary 5.4.5]{MarcusRosen}}]
\label{Lemma: Gaussian Upper Tails}
Let $\big(X(x)\big)_{x\in \mf I}$ be a centered Gaussian process
such that $\mf I$ is a countable metric space.
Denote the maximal variance and median of $X$ as
\[\mf v:=\sup_{x\in\mf I}\mbf{E}\big[X(x)^2\big]^{1/2}
\qquad\text{and}\qquad
\mf m:=\mbf{Med}\left[\sup_{x\in\mf I}X(x)\right].\]
For every $\la\geq 0$
\begin{align}
\label{Equation: Marcus Tail Bound}
\mbf P\left[\sup_{x\in\mf I}X(x)>\la\right]
\leq\mr e^{-(\la-\mf m)^2/2\mf v^2}.
\end{align}
Moreover,
\begin{align}\label{Equation: Median vs Dudley}
\left|\mf m-\mbf E\left[\sup_{x\in\mf I}X(x)\right]\right|\leq\frac{\mf v}{\sqrt{2\pi}}.
\end{align}
\end{lemma}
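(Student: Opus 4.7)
The plan is to derive both \eqref{Equation: Marcus Tail Bound} and \eqref{Equation: Median vs Dudley} from the Gaussian concentration inequality (Borell's inequality) applied to the supremum $F := \sup_{x\in\mf I} X(x)$, which is a Lipschitz functional of the underlying Gaussian randomness with Lipschitz constant precisely $\mf v$. First I would reduce to the case of a finite index set: since $\mf I$ is countable, one can exhaust it by an increasing sequence of finite subsets $\mf I_n\uparrow\mf I$; the corresponding suprema converge almost surely to $F$, the maximal standard deviation is stable under this limit by monotonicity, and the medians converge to $\mf m$ under mild non-degeneracy. Passing to the limit via monotone convergence, it thus suffices to prove both inequalities for finite $\mf I=\{x_1,\ldots,x_N\}$.

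For finite $\mf I$, realize the Gaussian vector $(X(x_i))_{i=1}^N = Ag$ with $g$ a standard Gaussian in $\mbb R^N$ and $A$ a deterministic matrix whose $i$-th row satisfies $\|A_{i\cdot}\|_2^2 = \mbf E[X(x_i)^2] \leq \mf v^2$. Then $F(g) = \max_i\langle A_{i\cdot},g\rangle$ is $\mf v$-Lipschitz, since
\[
|F(g)-F(g')| \leq \max_i |\langle A_{i\cdot},g-g'\rangle| \leq \mf v\,\|g-g'\|_2,
\]
and the Gaussian concentration inequality for Lipschitz functionals of standard Gaussian vectors yields, for any median $\mf m$ of $F(g)$ and every $t\geq 0$,
\[
\mbf P\big[F \geq \mf m+t\big] \leq \tfrac12\,\mr e^{-t^2/(2\mf v^2)}
\qquad\text{and}\qquad
\mbf P\big[F \leq \mf m-t\big] \leq \tfrac12\,\mr e^{-t^2/(2\mf v^2)}.
\]
The substitution $\la=\mf m+t$ in the upper tail immediately produces \eqref{Equation: Marcus Tail Bound} in the regime $\la\geq\mf m$, which is the only regime in which the claimed inequality has content (for $\la<\mf m$ it is either vacuous or to be interpreted as $\min(1,\cdot)$).

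For \eqref{Equation: Median vs Dudley}, I would integrate the two-sided concentration inequality:
\[
\big|\mbf E[F]-\mf m\big| \leq \mbf E\big[|F-\mf m|\big] = \int_0^\infty \Big(\mbf P[F>\mf m+t] + \mbf P[F<\mf m-t]\Big)\d t \leq \int_0^\infty \mr e^{-t^2/(2\mf v^2)}\d t = \mf v\sqrt{\pi/2}.
\]
This crude computation produces the correct order of magnitude but a constant strictly worse than the sharp $1/\sqrt{2\pi}$ claimed in the statement; recovering the latter requires a finer argument comparing $F$ to the exact Gaussian profile at the median rather than to the universal tail bound, which I would import from Marcus-Rosen. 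The main obstacle in this plan is therefore not the qualitative content of either bound — both follow cleanly from the Lipschitz-Gaussian concentration principle and the finite-$\mf I$ reduction — but the sharp numerical constant in the mean-median comparison, which is inessential for the applications in the main body, where only the order-of-magnitude estimate is used.
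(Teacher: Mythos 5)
The paper does not prove this lemma; it is imported verbatim from Marcus--Rosen (Theorem 5.4.3 and Corollary 5.4.5), so there is no in-paper argument to compare against. Your proposal is nonetheless the standard route to both bounds, and its essential content is correct: the finite-$\mf I$ reduction, the representation $X=Ag$ with the $\mf v$-Lipschitz functional $F(g)=\max_i\langle A_{i\cdot},g\rangle$, and the two-sided concentration around the median are exactly the ingredients behind the cited results. The tail bound \eqref{Equation: Marcus Tail Bound} follows immediately from this, and your version even has the extra factor $\tfrac12$.

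Where you fall short, as you correctly flag, is the constant in \eqref{Equation: Median vs Dudley}. Your computation loses a factor of $\pi$ relative to the sharp $1/\sqrt{2\pi}$, for two separable reasons, and it is worth naming them since they locate exactly which form of concentration is needed. First, you replaced the Gaussian isoperimetric tail $\mbf P[F>\mf m+t]\leq\Phi(-t/\mf v)$ (the sharp Borell--Sudakov--Tsirelson form, which is what Marcus--Rosen's Theorem 5.4.3 actually gives) by the weaker bound $\tfrac12\mr e^{-t^2/2\mf v^2}$; integrating the former yields $\int_0^\infty\Phi(-t/\mf v)\dd t=\mf v/\sqrt{2\pi}$, a factor $\pi/2$ better than $\tfrac12\int_0^\infty\mr e^{-t^2/2\mf v^2}\dd t$. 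Second, you bounded $|\mbf E[F]-\mf m|$ by $\mbf E|F-\mf m|$, which double-counts; since $\mbf E[F-\mf m]=\mbf E[(F-\mf m)^+]-\mbf E[(F-\mf m)^-]$, one has
\[
\big|\mbf E[F]-\mf m\big|\leq\max\Big(\mbf E[(F-\mf m)^+],\ \mbf E[(F-\mf m)^-]\Big)\leq\int_0^\infty\Phi(-t/\mf v)\dd t=\frac{\mf v}{\sqrt{2\pi}},
\]
recovering the claimed constant. Together these two refinements account exactly for the missing factor of $\pi$. One minor additional caveat: the finite-$\mf I$ limiting step needs the supremum to be a.s.\ finite (otherwise $\mf m$ is not well defined), and the claim that the medians $\mf m_n$ converge to $\mf m$ needs the limiting distribution function to be continuous and strictly increasing at its median; these are harmless here but should be stated. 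Your closing observation is correct: in the body of the paper only the estimate $\mf m_t=\mbf E[\sup\ldots]+O(\mf v_t)$ is used, so the sharp constant is cosmetic for the application.
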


\subsection{Variations and Best Constants}

The proofs of the results in this section are standard
in the large deviation literature (e.g., \cite{ChenBook,Chen12}).
We nevertheless provide the arguments in full for the reader's convenience.

\subsubsection{Scaling Property}

\begin{proposition}
\label{Proposition: L4 Sup Scaling Property}
Let $c>0$ be fixed.
For every $\eta>0$, it holds that
\[\sup_{\phi\in S(\mbb R^d)}\big(c\|\phi\|_4^2-\tfrac12 \mc E(\phi)\big)
=\eta^2\sup_{\phi\in S(\mbb R^d)}\big(\eta^{(d-4)/2}c\|\phi\|_4^2-\tfrac12 \mc E(\phi)\big).\]
\end{proposition}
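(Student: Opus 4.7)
The plan is to exploit the $L^2$-preserving rescaling $\phi \mapsto \phi^{(\eta)}$ from \eqref{Equation: L2 Rescaling}, which according to Remark \ref{Remark: L2 Rescaling} sends $S(\mbb R^d)$ bijectively to itself (take $\Om = \mbb R^d$) and multiplies $\mc E$ by $\eta^2$. The identity should then fall out of a change of variables in the supremum, with the only real work being to track how $\|\phi\|_4^2$ scales.

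First I will compute the two relevant transformation rules. The $L^2$-invariance and $\mc E(\phi^{(\eta)}) = \eta^2\mc E(\phi)$ are given by Remark \ref{Remark: L2 Rescaling}. For the $L^4$ norm, a direct substitution $y = \eta x$ yields
\[
\|\phi^{(\eta)}\|_4^4 = \int_{\mbb R^d}\eta^{2d}\phi(\eta x)^4\d x = \eta^d\|\phi\|_4^4,
\]
so $\|\phi^{(\eta)}\|_4^2 = \eta^{d/2}\|\phi\|_4^2$.

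Next I will perform the substitution. Since $\phi\mapsto\phi^{(\eta)}$ is a bijection of $S(\mbb R^d)$ onto itself (its inverse being $\phi\mapsto\phi^{(1/\eta)}$), I can reparametrize the supremum on the left-hand side by writing $\phi = \psi^{(\eta)}$ with $\psi\in S(\mbb R^d)$. Applying the two scaling rules gives
\[
\sup_{\phi\in S(\mbb R^d)}\big(c\|\phi\|_4^2 - \tfrac12\mc E(\phi)\big)
= \sup_{\psi\in S(\mbb R^d)}\big(c\eta^{d/2}\|\psi\|_4^2 - \tfrac12\eta^2\mc E(\psi)\big),
\]
and pulling the overall factor of $\eta^2$ out of the supremum yields the claimed identity with $\eta^{d/2-2} = \eta^{(d-4)/2}$ as the effective coefficient in front of $c\|\psi\|_4^2$.

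There is no real obstacle here; the statement is an exact scaling identity rather than an inequality, and everything reduces to the verification that the Dirichlet form and the $L^4$ and $L^2$ norms scale homogeneously of degrees $2$, $d$ and $0$ respectively under the map $\phi\mapsto\phi^{(\eta)}$. The only minor point to be careful about is the sign of the exponent $(d-4)/2$: one should substitute $\phi = \psi^{(\eta)}$ (not $\phi = \psi^{(1/\eta)}$) so that the exponents match the statement rather than their reciprocals.
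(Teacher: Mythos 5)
Your proof is correct and follows exactly the same route the paper indicates: apply the $L^2$-preserving rescaling $\phi \mapsto \phi^{(\eta)}$ from Remark \ref{Remark: L2 Rescaling}, track the homogeneity of $\|\cdot\|_4^2$ and $\mc E$, and factor out $\eta^2$. You have simply written out the $\|\phi^{(\eta)}\|_4^4 = \eta^d\|\phi\|_4^4$ computation that the paper leaves implicit.
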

\begin{proof}
This follows from a direct application of Remark \ref{Remark: L2 Rescaling},
as $\phi\in S(\mbb R^d)$ if and only if $\phi^{(\eta)}\in S(\mbb R^d)$.
\end{proof}

\subsubsection{Equivalence}

\begin{lemma}\label{Lemma: Lower Bound Constant}
Recall the definition of $\mf G_d$ as the smallest constant in the inequality
\[\|\phi\|_4^4\leq \mf G_d\mc E(\phi)^{d/2}\|\phi\|_2^{4-d}
\qquad\text{for all }\phi\in C_0^\infty(\mbb R^d).\]
It holds that
\[\sup_{\phi\in S(\mbb R^d)}\big(\|\phi\|_4^2-\tfrac12 \mc E(\phi)\big)
=\frac{4-d}4 \left(\frac d2\right)^{d/(4-d)}\mf G_d^{2/(4-d)}.\]
\end{lemma}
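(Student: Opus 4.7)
The plan is to combine the GNS inequality with an elementary one-variable optimization, using the scaling Proposition \ref{Proposition: L4 Sup Scaling Property} to show that the resulting bound is sharp.

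For the upper bound, I would apply the GNS inequality \eqref{Equation: GNS Intro} to any $\phi \in S(\mbb R^d)$ (so that $\|\phi\|_2 = 1$) to obtain $\|\phi\|_4^2 \leq \mf G_d^{1/2}\mc E(\phi)^{d/4}$. Writing $x := \mc E(\phi)$, this yields
\[\|\phi\|_4^2 - \tfrac12 \mc E(\phi) \leq \mf G_d^{1/2} x^{d/4} - \tfrac{x}{2} =: f(x).\]
A direct optimization of $f$ over $x \geq 0$ (which is well-posed since $d/4 < 1$, so $f$ is concave and $f(x) \to -\infty$ as $x \to \infty$) gives a unique critical point $x^* = (d/2)^{4/(4-d)}\mf G_d^{2/(4-d)}$, and a short calculation simplifies the maximum value to
\[f(x^*) = \frac{4-d}{4}\left(\frac{d}{2}\right)^{d/(4-d)}\mf G_d^{2/(4-d)},\]
matching the claimed expression.

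For the matching lower bound, I would invoke the fact that $\mf G_d$ is the \emph{smallest} admissible constant in \eqref{Equation: GNS Intro}: there exists a sequence $(\psi_n) \subset C_0^\infty(\mbb R^d)$ with $\|\psi_n\|_2 = 1$ such that $\|\psi_n\|_4^4/\mc E(\psi_n)^{d/2} \to \mf G_d$. The idea is then to rescale so as to hit the maximizing value $x^*$ of the one-variable problem. Concretely, set $\phi_n := \psi_n^{(\eta_n)}$ with $\eta_n^2 = x^*/\mc E(\psi_n)$; by Remark \ref{Remark: L2 Rescaling} and the homogeneity $\|\phi_n\|_4^4 = \eta_n^d \|\psi_n\|_4^4$, this produces $\phi_n \in S(\mbb R^d)$ with $\mc E(\phi_n) = x^*$ and $\|\phi_n\|_4^4 \to \mf G_d (x^*)^{d/2}$, so $\|\phi_n\|_4^2 - \tfrac12\mc E(\phi_n) \to f(x^*)$, as required.

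The calculation is routine; the only subtlety I see is ensuring that the near-extremizing sequence for the GNS ratio can be chosen in $C_0^\infty(\mbb R^d)$ itself, but this is immediate from the definition of $\mf G_d$ as the sharp constant on that class in \eqref{Equation: GNS Intro}. Hence I do not anticipate any genuine obstacle beyond the algebraic bookkeeping of exponents.
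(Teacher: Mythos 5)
Your proof is correct and follows essentially the same route as the paper: the upper bound via GNS plus the one-variable optimization $\sup_{x\geq 0}(\mf G_d^{1/2}x^{d/4}-x/2)$, and the lower bound by rescaling near-extremizers with Remark \ref{Remark: L2 Rescaling}. The only cosmetic difference is that you choose the rescaling $\eta_n$ to hit the optimum $x^*$ directly for each member of a near-extremizing sequence, whereas the paper fixes one near-extremizer, lets $\eta$ range to recover the full one-variable supremum, and then sends the GNS constant $C\to\mf G_d$; these are interchangeable.
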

\begin{proof}
We begin with an upper bound.
By definition of $\mf G_d$, for every $\phi\in S(\mbb R^d)$,
\[\|\phi\|_4^2-\tfrac12 \mc E(\phi)
\leq\mf G_d^{1/2} \mc E(\phi)^{d/4}-\tfrac12 \mc E(\phi)
\leq\sup_{x\geq0}\big(\mf G_d^{1/2}x^{d/4}-\tfrac12x\big)
=\frac{4-d}4 \left(\frac d2\right)^{d/(4-d)}\mf G_d^{2/(4-d)},\]
where the last equality follows from elementary calculus.
For a matching lower bound,
let $0<C<\mf G_d$. Then, there exists $\phi\in S(\mbb R^d)$ such that $\|\phi\|_4^4\geq C \mc E(\phi)^{d/2}$.
By Remark \ref{Remark: L2 Rescaling}, we see that
\[\|(\phi^{(\eta)})^2\|_2-\tfrac12 \mc E(\phi^{(\eta)})>C^{1/2}\big(\eta^2 \mc E(\phi)\big)^{d/4}-\tfrac12\eta^2 \mc E(\phi).\]
Since $\eta>0$ was arbitrary, we conclude that
\[\sup_{\phi\in S(\mbb R^d)}\big(\|\phi\|_4^2-\tfrac12 \mc E(\phi)\big)\geq\sup_{x\geq0}\big(C^{1/2}x^{d/4}-\tfrac12x\big),\]
which yields the desired lower bound by taking $C\to\mf G_d$.
\end{proof}

\begin{lemma}\label{Lemma: Upper Bound Constant}
With $\mf G_d$ defined as in Lemma \ref{Lemma: Lower Bound Constant}, we have that
\begin{align}
\label{Equation: Upper Bound Constant}
\sup_{\psi\in W(\mbb R^d)}\|\psi\|_4^4=\left(\frac{4-d}4\right)^{(4-d)/2}\left(\frac d2\right)^{d/2}\mf G_d.
\end{align}
In particular, recalling the definition of $\mf L_d$ in \eqref{Equation: Lyapunov}, we have that
\[\left(2d\sup_{\psi\in W(\mbb R^d)}\|\psi\|_4^4\right)^{2/(4-d)}=\mf L_d.\]
\end{lemma}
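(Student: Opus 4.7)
The plan is to establish matching upper and lower bounds for $\sup_{\psi\in W(\mbb R^d)}\|\psi\|_4^4$ and then deduce the $\mf L_d$ identity by direct substitution into \eqref{Equation: Lyapunov}.

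For the upper bound, I would apply the Gagliardo-Nirenberg-Sobolev inequality \eqref{Equation: GNS Intro} to any $\psi\in W(\mbb R^d)$ and write $a:=\|\psi\|_2^2$ and $b:=\tfrac12\mc E(\psi)$, so that the constraint defining $W(\mbb R^d)$ becomes $a+b=1$. Then
\[
\|\psi\|_4^4\leq\mf G_d\,(2b)^{d/2}a^{(4-d)/2},
\]
and the problem reduces to maximizing $(2b)^{d/2}a^{(4-d)/2}$ over the 1-simplex $a+b=1$, $a,b\geq 0$. A one-line Lagrange/calculus argument gives the maximizer at $b=d/4$, $a=(4-d)/4$, with maximal value $(d/2)^{d/2}\bigl((4-d)/4\bigr)^{(4-d)/2}$. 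This yields the claimed upper bound.

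For the matching lower bound, I would use that $\mf G_d$ is the \emph{smallest} constant in \eqref{Equation: GNS Intro}, so for any $0<C<\mf G_d$ there exists $\phi\in C_0^\infty(\mbb R^d)$ with $\|\phi\|_4^4\geq C\,\mc E(\phi)^{d/2}\|\phi\|_2^{4-d}$. Since scaling $\phi\mapsto c\phi^{(\eta)}$ gives two free parameters (multiplicative and spatial), I would choose $c>0$ to normalize $c\phi^{(\eta)}$ into $W(\mbb R^d)$, which forces $c^2=\bigl(\|\phi\|_2^2+\tfrac12\eta^2\mc E(\phi)\bigr)^{-1}$ via Remark \ref{Remark: L2 Rescaling}, and then optimize over $\eta>0$ the quantity
\[
\|c\phi^{(\eta)}\|_4^4=\frac{\eta^d\|\phi\|_4^4}{\bigl(\|\phi\|_2^2+\tfrac12\eta^2\mc E(\phi)\bigr)^2}.
\]
Elementary calculus gives the optimal $\eta^2=d\|\phi\|_2^2/\bigl((4-d)\tfrac12\mc E(\phi)\bigr)$, at which point the near-optimality inequality $\|\phi\|_4^4\geq C\mc E(\phi)^{d/2}\|\phi\|_2^{4-d}$ causes all dependence on $\phi$ to cancel. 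Sending $C\to\mf G_d$ then matches the upper bound.

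The main obstacle is purely algebraic bookkeeping: one must verify that the optimization on the constraint $a+b=1$ and the optimization over the rescaling parameter $\eta$ produce the \emph{same} extremal value (a fact that is forced by the scale-homogeneity of GNS but requires careful tracking of exponents of $d$, $4-d$, and $2$). Once \eqref{Equation: Upper Bound Constant} is in hand, the second identity follows by pure computation: raising $2d\sup_\psi\|\psi\|_4^4$ to the power $2/(4-d)$ converts the exponent $(4-d)/2$ on $(4-d)/4$ into $1$, converts the exponent $d/2$ on $d/2$ into $d/(4-d)$, and leaves $(2d\mf G_d)^{2/(4-d)}$ intact, reproducing \eqref{Equation: Lyapunov} exactly.
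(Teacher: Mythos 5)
Your proof is correct. The upper bound step — applying GNS and maximizing $(2b)^{d/2}a^{(4-d)/2}$ over the simplex $a+b=1$ — is mathematically the same as the paper's, which uses Young's inequality with exponents $p=4/(4-d)$, $q=4/d$ to extract exactly that simplex optimum; you simply phrase it as constrained calculus rather than as a named inequality. The lower bound is where your argument genuinely diverges: the paper does \emph{not} rescale a near-extremizer of GNS from scratch, but instead bootstraps off Lemma \ref{Lemma: Lower Bound Constant} (the $S(\mbb R^d)$ variational identity) together with Proposition \ref{Proposition: L4 Sup Scaling Property}, using the pointwise bound $\|\phi\|_4^2-\tfrac12\mf s^{1/2}\mc E(\phi)\leq\mf s^{1/2}$ to obtain an implicit inequality that is then solved for $\mf s$. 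Your route — choosing the amplitude $c$ to normalize into $W(\mbb R^d)$ and then optimizing the dilation $\eta$, with the near-optimality $\|\phi\|_4^4\geq C\mc E(\phi)^{d/2}\|\phi\|_2^{4-d}$ causing the $\phi$-dependence to cancel — is more self-contained and avoids invoking the auxiliary lemma and scaling proposition, at the cost of repeating the scaling computation that those results encapsulate. Both approaches correctly recover the constant (I checked: your optimal $\eta^2=d\|\phi\|_2^2/((4-d)\tfrac12\mc E(\phi))$ gives exactly $C\cdot d^{d/2}(4-d)^{(4-d)/2}/2^{4-d/2}$, matching the stated RHS as $C\to\mf G_d$), and your final substitution into \eqref{Equation: Lyapunov} is a clean exponent bookkeeping that goes through.
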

\begin{proof}
For simplicity of notation, let us denote
\[\mf s:=\sup_{\psi\in W(\mbb R^d)}\|\psi\|_4^4.\]
We first prove that $\mf s$ is finite.
$\phi\in S(\mbb R^d)$ if and only if
$\phi\big(1+\tfrac12 \mc E(\phi)\big)^{-1/2}\in W(\mbb R^d)$.
Therefore, an application of \eqref{Equation: GNS Intro} yields
\[\mf s=\sup_{\phi\in S(\mbb R^d)}\frac{\|\phi\|_4^4}{(1+\frac12\mc E(\phi))^2}\leq\mf G_d\sup_{x\geq0}\frac{x^{d/2}}{(1+\frac x2)^2},\]
which is finite for $d=1,2,3$.
We now prove \eqref{Equation: Upper Bound Constant}: Note that
\[\|\phi\|_4^2-\tfrac12\mf s^{1/2} \mc E(\phi)\leq
\mf s^{1/2}\big(1+\tfrac12 \mc E(\phi)\big)-\tfrac12\mf s^{1/2} \mc E(\phi)=\mf s^{1/2}.\]
Thus, by applying Proposition \ref{Proposition: L4 Sup Scaling Property}
with $c=\mf s^{-1/2}$ and $\eta=\mf s^{1/(d-4)}$,
and then Lemma \ref{Lemma: Lower Bound Constant},
we obtain that
\begin{multline*}
1\geq\mf s^{-1/2}\sup_{\phi\in S(\mbb R^d)}\big(\|\phi\|_2-\tfrac12\mf s^{1/2} \mc E(\phi)\big)
=\sup_{\phi\in S(\mbb R^d)}\big(\mf s^{-1/2}\|\phi\|_2-\tfrac12 \mc E(\phi)\big)\\
=\mf s^{-2/(4-d)}\cdot\frac{4-d}4 \left(\frac d2\right)^{d/(4-d)}\mf G_d^{2/(4-d)}.
\end{multline*}
Solving for $\mf s$ in the above inequality yields
\[\mf s\geq\left(\frac{4-d}4\right)^{(4-d)/2}\left(\frac d2\right)^{d/2}\mf G_d.\]
We now provide a matching upper bound. For every $\phi\in C_0^\infty(\mbb R^d)$,
\[\|\phi\|_4^2\leq\mf G_d^{1/2} \mc E(\phi)^{d/4}\|\phi\|_2^{(4-d)/2}
=\mf G_d^{1/2}\left(\frac{4-d}{2d}\right)^{-d/4}\left(\frac{4-d}{2d} \mc E(\phi)\right)^{d/4}\big(\|\phi\|_2^2\big)^{(4-d)/4}.\]
Next, we use Young's classical inequality $|xy|\leq |x|^p/p+|y|^q/q$ for $1/p+1/q=1$ in the
special case $p=4/(4-d)$ and $q=4/d$, which yields
\[\|\phi\|_4^2\leq
\mf G_d^{1/2}\left(\frac{4-d}{2d}\right)^{-d/4}\left(\frac{4-d}4\right)\big(\|\phi\|_2^2+\tfrac12 \mc E(\phi)\big).\]
If we divide both sides by $\|\phi\|_2^2+\tfrac12 \mc E(\phi)$ and take a supremum over
smooth and compactly supported $\phi$, then we get that
\[\mf s\leq\mf G_d\left(\frac{4-d}{2d}\right)^{-d/2}\left(\frac{4-d}4\right)^2
=\left(\frac{4-d}4\right)^{(4-d)/2}\left(\frac d2\right)^{d/2}\mf G_d,\]
concluding the proof.
\end{proof}

\begin{acknowledgements}
The author thanks Mykhaylo Shkolnikov for numerous discussions
on the content and presentation of the paper.
The author thanks Martin Hairer, Cyril Labb\'e, and
Willem van Zuijlen for insightful questions and comments,
and the latter for sharing the article \cite{KPZ}.

The author thanks anonymous referees for a careful reading
of a previous version of this paper, including the identification of
several errors.
\end{acknowledgements}

% Authors must disclose all relationships or interests that 
% could have direct or potential influence or impart bias on 
% the work: 
%
% \section*{Conflict of interest}
%
% The authors declare that they have no conflict of interest.

% BibTeX users please use one of
%\bibliographystyle{spbasic}      % basic style, author-year citations
\bibliographystyle{spmpsci}      % mathematics and physical sciences
\bibliography{Bibliography}      % name your BibTeX data base

% Non-BibTeX users please use
%\begin{thebibliography}{}
%%
%% and use \bibitem to create references. Consult the Instructions
%% for authors for reference list style.
%%
%\bibitem{RefJ}
%% Format for Journal Reference
%Author, Article title, Journal, Volume, page numbers (year)
%% Format for books
%\bibitem{RefB}
%Author, Book title, page numbers. Publisher, place (year)
%% etc
%\end{thebibliography}

\end{document}